%% file: Perceptron-v2.tex
\documentclass[11pt]{article}

\usepackage[margin=1.05in]{geometry}
\usepackage{amsmath,amssymb,amsthm,mathtools}
\usepackage[T1]{fontenc}
\usepackage{lmodern}
\usepackage{microtype}
\usepackage{tikz}
\usepackage{float}
\usepackage{caption}
\usepackage{subcaption}
\usepackage{needspace}
\usetikzlibrary{arrows.meta,calc}
\usepackage[colorlinks=true,linkcolor=blue,citecolor=blue,urlcolor=blue]{hyperref}
\hypersetup{
  pdftitle={A Robust Perceptron Cycling Theorem and Applications},
  pdfauthor={Tobias Harks}
}

\newtheorem{theorem}{Theorem}
\newtheorem{lemma}{Lemma}
\newtheorem{proposition}{Proposition}
\newtheorem{corollary}{Corollary}
\newtheorem{claim}{Claim}
\theoremstyle{definition}
\newtheorem{definition}{Definition}
\theoremstyle{remark}
\newtheorem{remark}{Remark}

\newcommand{\R}{\mathbb{R}}
\newcommand{\ip}[2]{\langle #1,#2\rangle}
\newcommand{\norm}[1]{\lVert #1\rVert}
\newcommand{\Sph}{\mathcal{S}}
\DeclareMathOperator{\conv}{conv}
\DeclareMathOperator{\Id}{Id}
\DeclareMathOperator*{\argmin}{arg\,min}

\input{Proof_Overview_Combined-v2.tikz}
\input{Cube_Three_Regimes-v2.tikz}

\title{A Robust Perceptron Cycling Theorem and Applications}
\author{Tobias Harks\thanks{Chair of Mathematical Optimization,
University of Passau, Dr.-Hans-Kapfinger-Str.~30, 94032 Passau, Germany.
Email: \href{mailto:tobias.harks@uni-passau.de}{\texttt{tobias.harks@uni-passau.de}}.}}
\date{\today}

\begin{document}
\maketitle

\begin{abstract}
The classical perceptron cycling theorem of Block and Levin \cite{BlockLevin1970} bounds correction sequences whose selected updates
come from a finite set and have nonpositive inner product with the current
state.  We prove a robust variant for additive trajectories
$z_{k+1}=z_k+u_k$, for all integers $k\geq0$, with increments in a finite
set $U\subset E$, where $E$ is a finite-dimensional real inner-product
space and $0\in\conv U$.  Let $A\colon E\to E$ have positive-definite
symmetric part, without requiring symmetry, and let $B\geq0$.
If each $u_k$ is a $B$-approximate
minimizer of $u\mapsto\ip{Az_k}{u}$ over $U$, then
$\sup_{k\geq0}\norm{z_k}\leq C(1+\norm{z_0}+B)$, with
$C=C(E,U,A)$ independent of the initial state, $B$, and all admissible
update choices.  We derive two algorithmic consequences.  The first is an
$O(k^{-1})$ last-iterate norm bound for harmonic vertex-returning
Frank--Wolfe for affine strongly monotone variational inequalities on
polytopes with relatively interior solutions.  It extends the quadratic
Frank--Wolfe/herding guarantee of Bach, Lacoste-Julien, and Obozinski
\cite[Section~4.2]{BachEtAl2012} to nonsymmetric affine operators.
The second consequence concerns oblique relaxation for linear inequalities.  Greedy
corrections through a fixed matrix with positive-definite symmetric part
remain bounded even for inconsistent systems.  In particular, for a
square matrix $G$ with positive-definite symmetric part, repeatedly
increasing the coordinate corresponding to a most-violated inequality
of $Gx\geq b$ terminates at an exactly feasible point after finitely many
unit corrections.  This remains true under bounded additive selection
errors, provided the stopping test uses the true inequalities.
\end{abstract}

\section{Introduction}\label{sec:introduction}

The binary linear classification problem underlying Rosenblatt's
perceptron \cite{Rosenblatt1958} asks whether labeled data
$(x_i,y_i)_{i=1}^m$, with $x_i\in\R^p$ and $y_i\in\{-1,1\}$, can be
separated according to their labels.  Put
$a_i:=y_i x_i$.  A vector $w\in\R^p$ defines a homogeneous classifier with
strictly positive margin on the data when
\[
  \ip{w}{a_i}=y_i\ip{w}{x_i}>0
  \qquad\text{for all }i=1,\ldots,m.
\]
Such a vector exists if and only if
$0\notin\conv\{a_1,\ldots,a_m\}$, by the strict finite-dimensional
separation theorem in Rockafellar \cite[Section~11]{Rockafellar1970}.  Affine classifiers
are included by appending a bias coordinate to each feature vector.

The perceptron correction algorithm starts from a weight vector $w_0$.  At
step $k$, if some example is misclassified or has zero margin, it chooses an
index $i_k$ satisfying
\[
  \ip{w_k}{a_{i_k}}\leq0
\]
and performs the correction
\[
  w_{k+1}=w_k+a_{i_k}.
\]
For strictly separable data the classical perceptron convergence theorem
ensures that only finitely many corrections are made; see
Novikoff \cite{Novikoff1962} and Minsky and Papert \cite{MinskyPapert1969},
or the modern presentation by Shalev-Shwartz and Ben-David
\cite[Theorem~9.1]{ShalevShwartzBenDavid2014}.  For nonseparable data
the procedure need not terminate, so the relevant question becomes whether
its weight sequence can escape to infinity.
The classical perceptron cycling theorem of Block and Levin
\cite{BlockLevin1970} answers this question.  Let $U$ be a finite subset of a
Euclidean space with $0\in\conv U$.  Every sequence satisfying, for all
integers $k\geq0$,
\begin{equation}\label{eq:classical-cycling-intro}
  z_{k+1}=z_k+u_k,
  \qquad u_k\in U,
  \qquad \ip{z_k}{u_k}\leq0
\end{equation}
obeys
\begin{equation}\label{eq:classical-bound-intro}
  \norm{z_k}\leq\norm{z_0}+M(U)
  \qquad\text{for all integers }k\geq0,
\end{equation}
where $M(U)$ is independent of the initial point and of all admissible update
choices.  In the classification problem, nonseparability is equivalent to
$0\in\conv U$ for $U:=\{a_1,\ldots,a_m\}$.  Taking $z_k=w_k$ and
$u_k=a_{i_k}$ identifies the perceptron corrections with
\eqref{eq:classical-cycling-intro}.  The theorem therefore shows that the
weights remain bounded even when the correction procedure runs indefinitely.
It also gives the averaging estimate
\begin{equation}\label{eq:classical-average-intro}
  \frac1T\sum_{k=0}^{T-1}u_k
  =\frac{z_T-z_0}{T}=O(T^{-1}),
\end{equation}
where $T$ denotes a
fixed terminal horizon.
Thus boundedness of the iterates also forces the average correction vector
to converge to zero at rate $O(T^{-1})$.

The purpose of this paper is to establish a \emph{robust} perceptron cycling theorem
for update rules in which the increment is chosen by approximate global
minimization of a linearly transformed score. 
Let $E$ be a finite-dimensional real
inner-product space, $U\subset E$ a nonempty finite set with
$0\in\conv U$, and $A\colon E\to E$ a linear map such that
\begin{equation}\label{eq:strong-positive}
  \ip{Ax}{x}\geq\mu\norm{x}^2
  \qquad\text{for all }x\in E \qquad (\text{coercivity})
\end{equation}
for some $\mu>0$.  We write $\mathbb N_0:=\{0,1,2,\ldots\}$.
\begin{definition}[trajectory and $B$-approximate trajectory]
\label{def:B-trajectory}
A \emph{$U$-valued trajectory} consists of a state sequence
$(z_k)_{k\in\mathbb N_0}$ in $E$ together with increments
$(u_k)_{k\in\mathbb N_0}$ in $U$ satisfying
\begin{equation}\label{eq:additive-trajectory}
  z_{k+1}=z_k+u_k
  \qquad\text{for all }k\in\mathbb N_0.
\end{equation}
The elements $z_k$ are its \emph{states} and the elements $u_k$ are its
\emph{increments}.  Because $u_k=z_{k+1}-z_k$, we usually identify a
trajectory with its state sequence $(z_k)_{k\in\mathbb N_0}$.
For a nonzero state $z$, its \emph{unit direction} is the vector
$z/\norm z$.  When we say that a trajectory enters a set of unit
directions, we mean that the unit direction of one of its nonzero states
belongs to that set.

Let $B\geq0$.  A $U$-valued trajectory is a
\emph{$B$-approximate trajectory for $(E,U,A)$} if, for every
$k\in\mathbb N_0$,
\begin{equation}\label{eq:trajectory}
  \ip{Az_k}{u_k}
  \leq\min_{u\in U}\ip{Az_k}{u}+B.
\end{equation}
A $0$-approximate trajectory is also called exact.
For integers $0\leq k_0\leq k_1$, a \emph{$U$-valued trajectory segment}
consists of states $(z_k)_{k_0\leq k\leq k_1}$ in $E$ and increments
$u_k\in U$ satisfying $z_{k+1}=z_k+u_k$ for all integers $k$ with
$k_0\leq k<k_1$.  It is
\emph{$B$-approximate} if
\[
  \ip{Az_k}{u_k}
  \leq\min_{u\in U}\ip{Az_k}{u}+B
  \qquad\text{for all integers }k\text{ with }k_0\leq k<k_1.
\]
\end{definition}
Compared with the classical Euclidean score $\ip{z_k}{u}$, we use the
transformed score $\ip{Az_k}{u}$, where $A$ may be nonsymmetric but its
symmetric part is positive definite by \eqref{eq:strong-positive}.  The
selection rule is nevertheless more specific than the one in the classical
cycling theorem: that theorem allows any update with nonpositive score,
whereas Definition~\ref{def:B-trajectory} requires an approximate global
minimizer.  If $A=\Id_E$ and $B=0$, every exact minimizer in
Definition~\ref{def:B-trajectory} has nonpositive score because
$0\in\conv U$, and is therefore covered by the classical theorem of Block and Levin
\cite{BlockLevin1970}.
The word \emph{robust} refers to the additive tolerance $B$.  The selected
increment need only be within $B$ of the optimal transformed score, and the
resulting state bound deteriorates at most linearly with $B$, uniformly over
all such trajectories.

\subsection{Our main result}\label{subsec:main-result-intro}
The new theorem states that every trajectory from
Definition~\ref{def:B-trajectory} remains in a ball whose radius grows at
most linearly with the initial norm and the oracle error.
For \emph{symmetric} positive-definite $A$, such a bound already follows from
Amaldi and Hauser's relaxation theorem
\cite[Theorem~7.1]{AmaldiHauser2005}; the reduction is given in
Section~\ref{sec:related}.  Our contribution is to establish a
uniform bound for general coercive $A$, without symmetry or an additional
restriction on its skew-symmetric part.
\begin{theorem}[Robust perceptron cycling]\label{thm:main}
There is a constant $C=C(E,U,A)\geq1$ such that, for every $B\geq0$, every
$B$-approximate $U$-valued trajectory satisfies
\begin{equation}\label{eq:main-bound}
  \sup_{k\geq0}\norm{z_k}
  \leq C\bigl(1+\norm{z_0}+B\bigr).
\end{equation}
\end{theorem}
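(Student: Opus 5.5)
We argue by induction on $\dim E$, combined with a compactness argument over unit directions, in the style of the classical Block--Levin proof. The reason it is needed is that the natural Lyapunov function $\ip{Az}{z}=\ip{A_sz}{z}$, with $A_s$ the symmetric part, behaves well along a step only partially:
\[
\ip{A_s z_{k+1}}{z_{k+1}}-\ip{A_sz_k}{z_k}=2\ip{A_sz_k}{u_k}+\ip{A_su_k}{u_k}.
\]
The selection controls $\ip{Az_k}{u_k}$, not $\ip{A_sz_k}{u_k}$. Since $0\in\conv U$, we have $\min_{u\in U}\ip{Az_k}{u}\le 0$, so $\ip{Az_k}{u_k}\le B$. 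The skew part, however, contributes $\ip{z_k}{(A-A^\top)u_k}/2$, and this term is of order $\norm{z_k}$, so the Lyapunov function need not decrease. A pure energy argument therefore fails for nonsymmetric $A$. This is exactly why the symmetric case reduces to Amaldi--Hauser, and it is where I expect the main difficulty to lie.

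\emph{Step 1 (directional reduction).} First reduce $U$ to a minimal face-type structure. Write $L=\operatorname{span}$ of the minimal face $F$ of $\conv U$ containing $0$ in its relative interior, and let $U_0=U\cap F$. Fix a unit direction $d$ and a large state $z=td$. The minimizers of $u\mapsto\ip{Ad}{u}$ over $U$ form a face $U_d$. For $t$ large compared with $B$, every $B$-approximate choice lies in $U_d$: the gap $\min_{U\setminus U_d}\ip{Ad}{u}-\min_U\ip{Ad}{u}$ is positive and varies continuously near $d$. Two cases arise. If $0\notin\conv U_d$, then the minimal score is strictly negative, so $\ip{Ad}{u}\le-\gamma<0$ for all $u\in U_d$. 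The trajectory then moves with $\ip{A_s d}{\cdot}$-type decrease of $\ip{Az}{z}$ up to skew terms. If instead $0\in\conv U_d$, then $U_d$ lies in the hyperplane $\{\ip{Ad}{\cdot}=\min\}$, and that minimum equals $0$.

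\emph{Step 2 (induction on dimension).} In the case $0\in\conv U_d$, the increments stay in $U_d\subset d_A^\perp:=\{u:\ip{Ad}{u}=0\}$, a proper subspace $E'$. Decompose $z=td+w$. Along the segment where the direction stays near $d$, the component $t$ is roughly frozen, since increments are orthogonal to $Ad$ up to $O(\norm{w}/t)$ perturbations. Meanwhile $w$ performs a $B'$-approximate trajectory for $(E',U_d,P A|_{E'})$, where $P$ is the orthogonal projection onto $E'$ along a suitable complement. The compressed operator $PA|_{E'}$ is again coercive with constant at least $\mu$. The score perturbation $t\ip{Ad}{u}$ is constant on $U_d$, so it does not affect the selection; the effective error $B'$ is $B$ plus a term controlled by the direction drift. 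By the induction hypothesis, $w$ stays bounded by $C'(1+\norm{w_0}+B')$. Hence the direction cannot rotate far away from $d$ while the norm grows, and the norm stays $\le t+O(1+B)$.

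\emph{Step 3 (compactness and gluing).} Cover the unit sphere by finitely many neighborhoods of directions $d$, each equipped with its constants from Step 1 or Step 2. Then show that each excursion at large radius either decreases the Lyapunov quantity $\ip{A_sz}{z}$ by a definite amount (the $0\notin\conv U_d$ case) or increases the norm by at most an additive $O(1+B)$ before the trajectory leaves the neighborhood with a controlled radius. Chaining these cases requires a potential that does not accumulate additive losses. I would try $\Phi(z)=\norm{z}_{A_s}$ and prove a ``no net growth above radius $R=C_0(1+B)$'' statement: any segment that starts at radius $R$ and reaches radius $2R$ must pass through some direction region of the first kind long enough to lose more than it gained. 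Linearity in $B$ then follows by homogeneity, by rescaling $z\mapsto z/(1+B)$ in the directional estimates. The gluing in Step 3, and in particular ruling out slow spiralling driven by the skew part across many Step-2 neighborhoods, is the main obstacle. To handle it, I would exploit that $\ip{Az}{u}\le B$ forces the projected motion to be monotone in some direction-dependent quantity, and then use finiteness of the face lattice of $\conv U$ to bound the number of regime switches per doubling of the norm.
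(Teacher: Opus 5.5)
Your Steps~1--2 match the paper's treatment of the degenerate unit directions (those $\theta$ with $\min_{u\in U}\ip{A\theta}{u}=0$), up to two repairs. First, split obliquely: $z=\alpha(z)d+y(z)$ with $\alpha(z)=\ip{Ad}{z}/\ip{Ad}{d}$ and $y(z)\in(Ad)^\perp$. Then $\alpha$ is \emph{exactly} constant while the increments lie in $U_d$. Moreover, $y$ is a $B$-approximate trajectory for $\bigl((Ad)^\perp,U_d,P_{(Ad)^\perp}A|_{(Ad)^\perp}\bigr)$ with the \emph{same} $B$, because $\ip{Az}{u}=\ip{Ay(z)}{u}$ for $u\in U_d$. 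Your drift-dependent $B'$ would break the feedback loop. Second, that loop shows the trajectory stays in the face \emph{forever}, not only ``while the direction stays near $d$''. The loop runs: trapping inequality $\Rightarrow$ increments in $U_d$ $\Rightarrow$ inductive bound on $y$ $\Rightarrow$ trapping inequality. It uses $\norm{y(z_{k_0})}\leq\eta_d\alpha(z_{k_0})$, with $\eta_d C_d\norm A R_U$ a small fraction of the score gap at $d$.

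The genuine gap is Step~3. Your potential $\norm z_{A_s}$ is not monotone even in nondegenerate directions, so ``lose more than it gained'' is false there. Take $U=\{-1,1\}^2$, $A=\left(\begin{smallmatrix}1&-2\\2&1\end{smallmatrix}\right)$ (so $A_s=\Id$), and $z=t\bigl(\tfrac{\sqrt3}2,\tfrac12\bigr)$ with $t$ large. The unique exact minimizer is $u=(1,-1)$ for the next order-$t$ steps, and $\min_{v\in U}\ip{Az}{v}=-\tfrac{3+\sqrt3}2t<0$. Yet $\ip{z}{u}\geq\tfrac{\sqrt3-1}2t$ along the whole stretch, so $\norm z$ grows by order $t$. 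It is also unclear how finiteness of the face lattice could bound regime switches. At large radius the minimizing vertex can alternate at every step: with the same $U$, $A=\Id$ and $z_0=(t,0.4)$, it alternates between $(-1,-1)$ and $(-1,1)$.

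The missing idea is to use $H(z):=-\min_{u\in U}\ip{Az}{u}$ itself as the Lyapunov function. It is nonnegative (since $0\in\conv U$), positively homogeneous, and $\norm A R_U$-Lipschitz, where $R_U=\max_{u\in U}\norm u$. It need not decrease step by step. Instead, take segments with order-$R$ steps, divide states and time by $R$, apply Arzel\`a--Ascoli, and use $B/R\to0$. The limits satisfy $\dot z\in\conv\mathcal F(z)$ almost everywhere. A Danskin-type argument then applies: all $u\in\mathcal F(z)$ share the rate $\ip{A\dot z}{u}$, and $\dot z$ is a convex combination of them. This gives
\[
 \frac{d}{dt}H(z(t))=-\ip{A\dot z}{\dot z}\leq-\mu\norm{\dot z}^2,
 \qquad
 \norm{\dot z}\geq\frac{H(z/\norm z)}{\norm A}.
\]
Only the quadratic form of $A$ on the velocity enters, so the skew part is harmless. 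In the example above, $H$ drops by exactly $\ip{Au}{u}=2$ per step. Outside a neighborhood $W$ of the degenerate directions this is a uniform decrease rate $c>0$, which transfers back to the discrete segments.

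No chaining of regimes is then needed. Suppose a trajectory reaches radius $R$. If the first entrance above radius $\varepsilon R$ into the cone over $W$ precedes the crossing of radius $R$, permanent trapping forces it to occur at radius at least $R/3$. Consider the segment from the last state with $\norm z\leq\varepsilon R$ up to that entrance or crossing. It has at least $R/(8R_U)$ steps with directions outside $W$. So $H$ would have to drop by about $cR/(8R_U)$, although it starts at most $\norm A R_U(\varepsilon R+R_U)$. This is a contradiction for small $\varepsilon$.

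Linearity in $B$ comes from running this as a contradiction argument with scales $R_j=j(1+\norm{z^j_0}+B_j)$. Rescaling $z\mapsto z/(1+B)$ is unavailable because $U$ is fixed.
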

Let us give a short example illustrating the theorem and its boundaries.
Figure~\ref{fig:cube-regimes} compares three exact trajectories with
$U=\{-1,1\}^2$, $z_0=(3/4,2/5)$, and $B=0$ illustrating the effect of
the linear map $A$ on the resulting trajectory.
\begin{figure}[H]
\centering
\captionsetup[subfigure]{font=small,justification=centering}
\begin{subfigure}[t]{.32\textwidth}
\centering
$\displaystyle A_0=\begin{pmatrix}1&0\\0&1\end{pmatrix}$
\par\smallskip
\resizebox{\linewidth}{!}{\CubeTrajectoryPanel{0}}
\caption{Identity: two-cycle.}\label{fig:cube-identity}
\end{subfigure}\hfill
\begin{subfigure}[t]{.32\textwidth}
\centering
$\displaystyle A_1=\begin{pmatrix}1&-2\\2&1\end{pmatrix}$
\par\smallskip
\resizebox{\linewidth}{!}{\CubeTrajectoryPanel{1}}
\caption{Coercive: six-cycle.}\label{fig:cube-coercive}
\end{subfigure}\hfill
\begin{subfigure}[t]{.32\textwidth}
\centering
$\displaystyle A_2=\begin{pmatrix}0&-1\\1&0\end{pmatrix}$
\par\smallskip
\resizebox{\linewidth}{!}{\CubeTrajectoryPanel{2}}
\caption{Noncoercive: unbounded.}\label{fig:cube-skew}
\end{subfigure}
\caption{Exact dynamics for the matrices shown above.  The shaded squares
are $\conv U$, not constraints on the states; black squares mark the
allowed increments.  Subfig. (a) and (b) have the same scale and symmetric
part $I$.  Subfig. (c) uses a wider scale and shows the first 45 updates of
an unbounded trajectory.  The cycles illustrate boundedness; periodicity
need not generally be true.}
\label{fig:cube-regimes}
\end{figure}
The skew-symmetric example in Figure~\ref{fig:cube-skew} leads to an unbounded trajectory
even with $B=0$ and every oracle minimizer being unique.
Coercivity cannot in general be replaced by mere monotonicity, that is,
$\ip{Ax}{x}\geq0$ for all $x\in E$.
Subsection~\ref{subsec:skew-counterexample} verifies this divergence and
gives a further example with $\norm{z_k}=\Theta(\sqrt{k})$.
The tolerance $B$ also represents imperfect information in the selection
rule.  Bounded delays and uniformly bounded state, quantization, or score
errors give a uniformly bounded error in the linear scores.  Approximate
minimization of these scores up to a fixed additive error therefore
produces a $B$-approximate trajectory for a fixed $B$, so
Theorem~\ref{thm:main} still guarantees
boundedness.  Proposition~\ref{prop:robustness-mechanisms} in
Appendix~\ref{app:oracle-errors} gives the combined error bound.
The conversion from these errors to $B$ depends on the application;
Subsections~\ref{subsec:affine-fw-app}
and~\ref{subsec:oblique-relaxation} explain the relevant distinctions.

A rounding argument gives the following extension to bounded nonnegative
update weights.  The oracle still selects a direction from the fixed
finite set $U$.
\begin{corollary}[Weighted updates]\label{cor:weighted}
Under the standing assumptions on $E$, $U$, and $A$, there is a constant
$C_{\mathrm w}=C_{\mathrm w}(E,U,A)\geq1$ such that the following holds.
Let $\overline w\geq0$ and $B\geq0$.  Suppose that, for every $k\in\mathbb N_0$,
\begin{equation}\label{eq:weighted-recursion}
  z_{k+1}=z_k+w_ku_k,
  \qquad 0\leq w_k\leq\overline w,\qquad u_k\in U,
\end{equation}
and
\begin{equation}\label{eq:weighted-oracle}
  \ip{Az_k}{u_k}\leq\min_{u\in U}\ip{Az_k}{u}+B.
\end{equation}
Then
\begin{equation}\label{eq:weighted-bound}
  \sup_{k\geq0}\norm{z_k}
  \leq C_{\mathrm w}\bigl(\overline w+\norm{z_0}+B\bigr).
\end{equation}
The same bound holds for any finite recursion satisfying these conditions
at every update, with the supremum taken over its state indices.
No monotonicity or positive lower bound on the weights is required.
\end{corollary}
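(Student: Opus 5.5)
We reduce Corollary~\ref{cor:weighted} to Theorem~\ref{thm:main} by rounding the weights. The rounding produces a unit-step trajectory that stays close to the weighted one, and the rounding error is absorbed into the oracle tolerance.

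\emph{Rounding scheme.} Put $N:=\lceil\overline w\rceil$ and handle one weighted step $z_{k+1}=z_k+w_ku_k$ at a time. Keep a running residual $r\in[0,1)$ for each direction; since $U$ is finite, carry a residual vector $(r_u)_{u\in U}$. At step $k$, increase $r_{u_k}$ by $w_k$. Emit $\lfloor r_{u_k}\rfloor$ unit increments equal to $u_k$, and subtract that integer from $r_{u_k}$. This yields a $U$-valued trajectory segment $(y_j)$ whose states are tied to the original ones at the times $j=j_k$ where we finish processing step $k$:
\[
z_k=y_{j_k}+\sum_{u\in U} r_u^{(k)}u,\qquad \norm{z_k-y_{j_k}}\leq R:=|U|\max_{u\in U}\norm u .
\]
Intermediate unit states satisfy $\norm{y_j-y_{j_k}}\leq \overline w\,\max_{u\in U}\norm u$. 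Hence every state $y$ emitted while processing step $k$ satisfies
\[
\norm{y-z_k}\leq D:=R+\overline w\max_{u\in U}\norm u .
\]
If $\overline w<1$, residuals make $D$ depend only on $R$ plus $\overline w$ times a constant, so $D\le c_U(1+\overline w)$. The case $\overline w=0$ is trivial. For $0<\overline w<1$ we later rescale so the bound is stated with $\overline w$ rather than $1$; see the last paragraph.

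\emph{Oracle error.} For each emitted state $y$ with emitted increment $u_k$ we have
\[
\ip{Ay}{u_k}-\min_{u}\ip{Ay}{u}\leq \ip{Az_k}{u_k}-\min_u\ip{Az_k}{u}+2\norm A D\max_u\norm u\leq B+2\norm A D M ,
\]
where $M:=\max_u\norm u$. So the unit segment is $B'$-approximate with $B'=B+c(1+\overline w)$. It can be extended beyond its end to an infinite exact trajectory by greedy choices; alternatively, apply Theorem~\ref{thm:main} to a finite segment, which is legitimate because the bound in Theorem~\ref{thm:main} concerns any prefix of an infinite trajectory. Either way,
\[
\norm{y_j}\leq C(1+\norm{y_0}+B').
\]
The segment starts at $y_0=z_0$ with zero residual, and $\norm{z_k}\leq\norm{y_{j_k}}+R$. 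This gives $\sup_k\norm{z_k}\leq C'(1+\overline w+\norm{z_0}+B)$. The same argument proves the claim about finite recursions.

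\emph{Removing the additive $1$.} This is the main obstacle: the target bound has no additive constant, only $\overline w$. First, if $\overline w=0$ the states are constant and there is nothing to prove. Otherwise use homogeneity. Set $t:=\overline w$ and $\tilde z_k:=z_k/t$, so that $\tilde z_{k+1}=\tilde z_k+(w_k/t)u_k$ with weights in $[0,1]$. Dividing \eqref{eq:weighted-oracle} by $t$ shows the oracle condition holds with tolerance $B/t$. The previous bound gives
\[
\norm{\tilde z_k}\leq C'(2+\norm{z_0}/t+B/t),
\]
and multiplying by $t$ yields $\norm{z_k}\leq C'(2\overline w+\norm{z_0}+B)$. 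This is \eqref{eq:weighted-bound} with $C_{\mathrm w}=2C'$.
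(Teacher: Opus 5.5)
Your proposal is correct and follows essentially the paper's proof: round the accumulated weight of each direction down to an integer, absorb the bounded fractional remainder (of norm at most $\sum_{u\in U}\norm u$) into the oracle tolerance, extend any finite unit segment by exact minimizers, apply Theorem~\ref{thm:main}, and finally rescale by $\overline w$ to remove the additive constant. The only difference is that the paper rescales to $\overline w=1$ before rounding, so each weighted step emits at most one unit increment ($\xi_k\in\{0,1\}$) and your separate estimate for intermediate unit states within a block is not needed.
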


Telescoping the weighted update equation gives an averaging estimate in
terms of the total weight.
\begin{corollary}[Weighted averaging]\label{cor:averaging}
Under the assumptions of Corollary~\ref{cor:weighted}, define
$\Lambda_T:=\sum_{k=0}^{T-1}w_k$ for every integer $T\geq1$.
For every such $T$ with $\Lambda_T>0$,
\begin{equation}\label{eq:averaging-consequence}
  \norm{\frac1{\Lambda_T}\sum_{k=0}^{T-1}w_ku_k}
  =\frac{\norm{z_T-z_0}}{\Lambda_T}
  \leq
  \frac{C_{\mathrm w}(\overline w+\norm{z_0}+B)+\norm{z_0}}{\Lambda_T}.
\end{equation}
If $\lim_{T\to\infty}\Lambda_T=\infty$, the weighted averages therefore converge to zero
at rate $O(\Lambda_T^{-1})$.  This gives an $O(T^{-1})$ bound if there are
$c>0$ and an integer $T_0\geq1$ such that $\Lambda_T\geq cT$ for every
integer $T\geq T_0$.
For unit weights $w_k=1$ for all $k\in\mathbb N_0$, the constant $C$ from
Theorem~\ref{thm:main} gives the bound
\[
  \norm{\frac1T\sum_{k=0}^{T-1}u_k}
  \leq\frac{C(1+\norm{z_0}+B)+\norm{z_0}}T
  \qquad\text{for every integer }T\geq1.
\]
\end{corollary}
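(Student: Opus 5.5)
The averaging corollary follows from Corollary~\ref{cor:weighted} by telescoping. I would first sum the weighted recursion \eqref{eq:weighted-recursion} over $k=0,\ldots,T-1$, which gives
\[
  \sum_{k=0}^{T-1}w_ku_k=z_T-z_0 .
\]
Dividing by $\Lambda_T>0$ and taking norms yields the equality in \eqref{eq:averaging-consequence}. The triangle inequality then gives $\norm{z_T-z_0}\leq\norm{z_T}+\norm{z_0}$. The hypotheses of Corollary~\ref{cor:weighted} hold by assumption, so $\norm{z_T}\leq\sup_k\norm{z_k}\leq C_{\mathrm w}(\overline w+\norm{z_0}+B)$. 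This is the inequality in \eqref{eq:averaging-consequence}.

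For the rate statements, observe that the numerator $C_{\mathrm w}(\overline w+\norm{z_0}+B)+\norm{z_0}$ does not depend on $T$. Hence the bound is $O(\Lambda_T^{-1})$, and it tends to zero whenever $\Lambda_T\to\infty$. If $\Lambda_T\geq cT$ for all $T\geq T_0$, then for those $T$ the bound is at most the same constant divided by $cT$, which is $O(T^{-1})$.

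For unit weights $w_k=1$ we have $\Lambda_T=T\geq1$ for every $T\geq1$. In this case I would not use $C_{\mathrm w}$ but apply Theorem~\ref{thm:main} directly, because the recursion is then exactly a $B$-approximate $U$-valued trajectory in the sense of Definition~\ref{def:B-trajectory}. That gives $\norm{z_T}\leq C(1+\norm{z_0}+B)$, and the same telescoping and triangle-inequality steps produce the displayed bound. No step here is a real obstacle. The only point to check is that the unit-weight case fits Definition~\ref{def:B-trajectory} verbatim, which holds because condition \eqref{eq:weighted-oracle} is identical to \eqref{eq:trajectory}.
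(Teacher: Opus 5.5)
Your proposal is correct and matches the paper's proof. The paper likewise telescopes to get $\sum_{k<T}w_ku_k=z_T-z_0$, bounds $\norm{z_T-z_0}$ by the triangle inequality together with Corollary~\ref{cor:weighted}, and for unit weights applies Theorem~\ref{thm:main} directly; your check that the unit-weight recursion is exactly a $B$-approximate trajectory is the justification the paper leaves implicit.
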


\begin{proof}
The identity follows from
$\sum_{k=0}^{T-1}w_ku_k=z_T-z_0$; the estimate follows from
Corollary~\ref{cor:weighted} and the triangle inequality.  For unit
weights, apply Theorem~\ref{thm:main} instead.
\end{proof}

The weighted bound also has a vertex-returning polytope form, including
approximate linear minimization.

\begin{corollary}[Vertex-returning polytope form]\label{cor:polytope}
Let $V\subset\R^D$ be a polytope with $0\in V$, let
$U:=\operatorname{vert}(V)$, let $A\colon\R^D\to\R^D$ be linear, and suppose that
$\tfrac12(A+A^{\mathsf T})\succ0$.  Let $\overline w,B\geq0$.  If, for every
$k\in\mathbb N_0$,
\[
  z_{k+1}=z_k+w_ku_k,
  \qquad 0\leq w_k\leq\overline w,\qquad u_k\in U,
\]
and
\[
  \ip{Az_k}{u_k}\leq\min_{v\in V}\ip{Az_k}{v}+B,
\]
then
\[
  \sup_{k\geq0}\norm{z_k}
  \leq C_{\mathrm w}(\R^D,U,A)\bigl(\overline w+\norm{z_0}+B\bigr).
\]
In particular, exact vertex minimizers
$u_k\in U\cap\argmin_{v\in V}\ip{Az_k}{v}$ are covered with $B=0$.
\end{corollary}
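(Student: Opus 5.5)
The plan is to reduce the statement to Corollary~\ref{cor:weighted} applied with $E=\R^D$, the finite set $U=\operatorname{vert}(V)$, and the same map $A$. There are three preliminary checks. First, $U$ is finite and nonempty because $V$ is a polytope. Second, $0\in V=\conv U$, so the standing assumption $0\in\conv U$ holds. Third, the condition $\tfrac12(A+A^{\mathsf T})\succ0$ is equivalent to the coercivity condition \eqref{eq:strong-positive}: take $\mu$ to be the smallest eigenvalue of the symmetric part and use $\ip{Ax}{x}=\ip{\tfrac12(A+A^{\mathsf T})x}{x}$. Hence the constant $C_{\mathrm w}(\R^D,U,A)$ from Corollary~\ref{cor:weighted} exists and depends only on $(\R^D,U,A)$.

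It remains to verify the oracle condition \eqref{eq:weighted-oracle}. Fix $k$. The map $v\mapsto\ip{Az_k}{v}$ is linear, and $V=\conv U$ is compact. A linear function on a polytope attains its minimum at a vertex, since every $v\in V$ is a convex combination of vertices and a convex combination of values is at least their minimum. This gives
\[
  \min_{v\in V}\ip{Az_k}{v}=\min_{u\in U}\ip{Az_k}{u}.
\]
The hypothesis $\ip{Az_k}{u_k}\leq\min_{v\in V}\ip{Az_k}{v}+B$ therefore becomes exactly \eqref{eq:weighted-oracle}. The recursion and the weight bounds $0\leq w_k\leq\overline w$ are assumed verbatim. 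Corollary~\ref{cor:weighted} then yields the stated bound.

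For the final sentence, suppose $u_k\in U\cap\argmin_{v\in V}\ip{Az_k}{v}$. Then $\ip{Az_k}{u_k}=\min_{v\in V}\ip{Az_k}{v}$, which is the hypothesis with $B=0$.

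The argument has no real obstacle. The only point needing care is the equality between the minimum over $V$ and the minimum over its vertex set, which lets a $B$-approximate minimizer over the polytope count as a $B$-approximate minimizer over the finite set $U$.
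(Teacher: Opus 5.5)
Your proposal is correct and follows the paper's proof: you use $V=\conv U$ to get $0\in\conv U$, note that a linear functional has the same minimum over $V$ as over its vertex set $U$, and then apply Corollary~\ref{cor:weighted}. Your explicit checks that $U$ is finite and nonempty and that $\tfrac12(A+A^{\mathsf T})\succ0$ gives the coercivity condition \eqref{eq:strong-positive} are details the paper leaves implicit.
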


\begin{proof}
Since $V=\conv U$, the assumption $0\in V$ gives $0\in\conv U$.
A linear functional has the same minimum over $V$ and over its vertex set
$U$.  The oracle inequality is therefore precisely
\eqref{eq:weighted-oracle}, and Corollary~\ref{cor:weighted} applies.
\end{proof}

To complement the qualitative bound of Theorem~\ref{thm:main} with an
explicit constant, we next impose symmetry and relative interiority.
Amaldi and Hauser already give an explicit bound; see the comparison in
Section~\ref{sec:related}.  Under these stronger assumptions, a separate
quadratic Lyapunov argument yields the relative-inradius estimate below.
This estimate can be substantially smaller than the bound obtained by a
direct application of Amaldi and Hauser's general formula; see the
example in Appendix~\ref{app:bound-comparison}.

\begin{theorem}[Explicit symmetric relative-interior bound]
\label{thm:explicit-symmetric}
Suppose that $U\neq\{0\}$, $A$ is symmetric positive definite, and
$0\in\operatorname{relint}(\conv U)$.  Write $\lambda_{\min}$ and
$\lambda_{\max}$ for the smallest and largest eigenvalues of $A$.
Set $V:=A^{1/2}U$ and $R:=\max_{v\in V}\norm v$, where $A^{1/2}$ is
the symmetric positive-definite square root of $A$.  Let $r>0$ be the
relative inradius of $\conv V$ at zero: the radius of the largest ball
centered at zero in $\operatorname{span}V$ and contained in $\conv V$.
Then, for every $B\geq0$, every $B$-approximate trajectory satisfies
\begin{equation}\label{eq:explicit-symmetric-simple}
 \sup_{k\geq0}\norm{z_k}
 \leq
 \sqrt{\frac{\lambda_{\max}}{\lambda_{\min}}}\,\norm{z_0}
 +\frac1{\sqrt{\lambda_{\min}}}
  \left(R+\frac{R^2}{2r}+\frac Br\right).
\end{equation}
\end{theorem}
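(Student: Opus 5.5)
We change variables to $y_k:=A^{1/2}z_k$. Then $y_{k+1}=y_k+v_k$ with $v_k:=A^{1/2}u_k\in V$, and the oracle condition becomes $\ip{y_k}{v_k}\leq\min_{v\in V}\ip{y_k}{v}+B$, because $\ip{Az_k}{u}=\ip{A^{1/2}z_k}{A^{1/2}u}$ by symmetry of $A^{1/2}$. Since $\norm{z}\leq\lambda_{\min}^{-1/2}\norm{A^{1/2}z}$ and $\norm{y_0}\leq\sqrt{\lambda_{\max}}\norm{z_0}$, it suffices to show
\[
  \norm{y_k}\leq\norm{y_0}+R+\frac{R^2}{2r}+\frac Br\qquad\text{for all }k.
\]
Let $L:=\operatorname{span}V$, and decompose $y_k=p_k+q_k$ with $p_k\in L$ and $q_k\in L^\perp$. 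All increments lie in $L$, so $q_k=q_0$ for all $k$. Also $\ip{y_k}{v}=\ip{p_k}{v}$ for $v\in V$. Hence $\norm{y_k}^2=\norm{p_k}^2+\norm{q_0}^2$, and it is enough to control $\norm{p_k}$ in terms of $\norm{p_0}$.

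The key step uses the inradius. The ball of radius $r$ in $L$ around zero lies in $\conv V$, and the minimum of a linear functional over $V$ equals its minimum over $\conv V$. Therefore
\[
  \min_{v\in V}\ip{p_k}{v}\leq -r\norm{p_k},\qquad\text{so}\qquad \ip{p_k}{v_k}\leq -r\norm{p_k}+B.
\]
This gives the one-step Lyapunov estimate
\[
  \norm{p_{k+1}}^2=\norm{p_k}^2+2\ip{p_k}{v_k}+\norm{v_k}^2\leq\norm{p_k}^2-2r\norm{p_k}+2B+R^2.
\]
Set $\rho:=\frac{R^2}{2r}+\frac Br$. If $\norm{p_k}\geq\rho$, the squared norm does not increase. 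If $\norm{p_k}<\rho$, then $\norm{p_{k+1}}\leq\norm{p_k}+\norm{v_k}<\rho+R$. An induction then gives $\norm{p_k}\leq\max(\norm{p_0},\rho+R)$ for every $k$. Indeed, once above $\rho$ the norm can only decrease until it drops below $\rho$, and from below $\rho$ one step can overshoot by at most $R$.

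It remains to combine the pieces:
\[
  \norm{y_k}\leq\norm{p_k}+\norm{q_0}\leq\max\bigl(\norm{p_0},\rho+R\bigr)+\norm{y_0}.
\]
This is slightly weaker than the target. The cleaner route is to use $\norm{y_k}^2=\norm{p_k}^2+\norm{q_0}^2$ together with $\max(a,b)\leq a+b$. That yields
\[
  \norm{y_k}\leq\sqrt{(\norm{p_0}+\rho+R)^2+\norm{q_0}^2}\leq\norm{y_0}+\rho+R,
\]
where the last inequality is the triangle inequality in $\R^2$ applied to $(\norm{p_0},\norm{q_0})+(\rho+R,0)$. Dividing by $\sqrt{\lambda_{\min}}$ and using $\norm{y_0}\leq\sqrt{\lambda_{\max}}\norm{z_0}$ gives \eqref{eq:explicit-symmetric-simple}.

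The main obstacle I expect is the bookkeeping of the invariant region, namely stating the induction hypothesis so that the overshoot $R$ is added only once. Everything else is routine once the $A^{1/2}$ change of variables and the relative-interior reduction to $L$ are in place.
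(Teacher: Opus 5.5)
Your proposal is correct and follows the paper's proof essentially step for step. The change of variables $y_k=A^{1/2}z_k$, the invariant component $q_0\in L^\perp$, the inradius bound $\min_{v\in V}\ip{p_k}{v}\leq-r\norm{p_k}$, the one-step estimate $\norm{p_{k+1}}^2\leq\norm{p_k}^2-2r\norm{p_k}+2B+R^2$, the induction giving $\norm{p_k}\leq\max\{\norm{p_0},R+(2B+R^2)/(2r)\}$, and the closing triangle inequality in $\R^2$ are exactly the paper's argument.
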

Subsection~\ref{subsec:proof-explicit-symmetric} provides a formal proof of this bound.

\subsection{Proof technique}\label{subsec:proof-technique-intro}
The proof of Theorem~\ref{thm:main} proceeds by induction on the dimension
of $E$.  If the theorem failed in the inductive step, there would be
approximate trajectories reaching target radii arbitrarily large relative
to $1+\norm{z_0}+B$.  We combine an estimate obtained by face reduction
near the \emph{degenerate directions}, the unit vectors at which
the minimum transformed score is zero, with a decrease estimate for
trajectory segments outside a neighborhood of these directions.

For each degenerate direction $d$, the increments minimizing the score at
$d$ lie in a face of $\conv U$ contained in the lower-dimensional subspace
$(Ad)^\perp$.  While the selected increments belong to this face,
projection onto this subspace along $d$ gives a $B$-approximate trajectory.
The difference between the original and projected states stays constant.
The induction hypothesis bounds the projected trajectory.  We use this
bound to prove that, if a state has unit direction sufficiently close to
$d$ and norm above a prescribed threshold, every subsequent increment
remains in the same face.  This yields a bound on all subsequent states
in terms of the entrance norm and the oracle error.  The neighborhood
and the constants initially depend on the chosen direction.  Compactness
of the set of degenerate directions allows us to cover it by finitely many
of these neighborhoods.  Their union defines a neighborhood $W$ on the
unit sphere.  Taking the largest of the finitely many constants gives a
common entrance threshold and a common bound throughout $W$
(Corollary~\ref{cor:finite-cover}).  On unit directions outside $W$, the
minimum transformed score is bounded above by a fixed negative constant,
which permits the complementary decrease estimate.

Figure~\ref{fig:proof-overview} illustrates the construction.  Write $R$
for the target radius and $\varepsilon R$ for the inner threshold, where
$\varepsilon>0$ will be chosen sufficiently small.  Since $W$ consists
of unit vectors, states with direction in $W$ lie in the cone
$\mathcal C(W):=\{r\theta:r>0,\ \theta\in W\}$.
The shaded region is the part of this cone between the inner and outer
circles, not $W$ itself.
\begin{figure}[H]
\centering
\resizebox{0.54\textwidth}{!}{\ProofOverviewCombined}
\captionsetup{width=.92\textwidth,justification=centering}
\caption{The two cases, with circles of radii $\varepsilon R$, $R/3$,
and $R$.  The highlighted segments run from $z_s$ to $z_\ell$.
Trajectories and step lengths are schematic; $\varepsilon=1/12$ is
used for the drawing and may be smaller in the proof.}
\label{fig:proof-overview}
\end{figure}

For trajectory segments whose unit directions remain outside $W$, we join
consecutive states by line segments and divide both the states and the
elapsed time by the target radius.  We choose these segments so that their
state norms lie between fixed positive multiples of the target radius.
The oracle errors divided by the target radius tend to zero.
Subsequential uniform limits of these
interpolated curves are the limit paths used in the proof.  Along these
limit paths, a nonnegative Lyapunov function decreases at a uniformly
positive rate.  We transfer this decrease to the original discrete
segments; we do not require decrease at every individual update
(Lemma~\ref{lem:block}).

To select a segment that yields a contradiction, we distinguish two
cases, shown in Figure~\ref{fig:proof-overview}.  In Case~1 (green), the
trajectory first enters $\mathcal C(W)$ at or above the inner threshold
at a state $z_e$, before reaching the target radius.  The entrance
estimate bounds all later norms in terms of $\norm{z_e}$ and the oracle
error.  Since the trajectory subsequently reaches the target radius,
the entrance norm must be at least a fixed positive fraction of that
radius; the proof gives $\norm{z_e}\geq R/3$.
In Case~2 (red), the trajectory reaches the target radius at
$z_{k_R}$ without an earlier such entrance.  In either case, we select
the highlighted segment from $z_s$ to $z_\ell$: it starts immediately
after the last state at or below the inner threshold and ends just
before the entrance or target crossing.  All its states have unit
directions outside $W$.
Bounded increments ensure that this segment contains at least a fixed
positive multiple of the target radius in updates.  We choose the inner
threshold as a sufficiently small fixed fraction of the target radius.
The decrease estimate then forces the Lyapunov function to decrease by
more than its value at the beginning of the segment, which is controlled
by the inner threshold.
This contradicts its nonnegativity.  Section~\ref{sec:proof} gives the
full proof.

Block and Levin \cite{BlockLevin1970} already used dimension induction,
local confinement, projection onto a hyperplane orthogonal to a fixed
direction, and a finite covering of the sphere.  We adapt this geometric
strategy to nonsymmetric scores.  Near a degenerate direction $d$, we
project onto $(Ad)^\perp$ along $d$; this projection need not be orthogonal.
The reduced trajectory satisfies the same approximate oracle condition,
with unchanged error $B$, and the reduced operator remains coercive.
Away from the degenerate directions, we analyze limit paths of normalized
trajectories and transfer their Lyapunov decrease to discrete trajectory
segments.  Together, these estimates replace the displacement bound for
proper chains used by Block and Levin.  Our methodological contribution
lies in this adaptation and combination for nonsymmetric coercive scores.

\subsection{Two algorithmic applications}\label{subsec:applications-intro}

We present two algorithmic consequences of Theorem~\ref{thm:main}.
The first is a last-iterate rate for Frank--Wolfe applied to affine
strongly monotone variational inequalities.  The second is finite
termination of a coordinate-correction algorithm for linear inequality
systems whose square coefficient matrix has positive-definite
symmetric part.

\paragraph{Frank--Wolfe for affine strongly monotone variational inequalities.}
Our first application is an $O(k^{-1})$
last-iterate norm bound for harmonic vertex-returning Frank--Wolfe applied
to an affine strongly monotone variational inequality on a polytope with
a relatively interior solution. 
An affine variational inequality is the problem of
finding $x\in K$ such that
\[
  \ip{\Phi(x)}{y-x}\geq0
  \qquad\text{for all }y\in K,
\]
where $K$ is a nonempty compact polytope and $\Phi(x)=Lx+a$ is an affine
operator.  We assume that $L$ is coercive on the feasible-direction
space $T_K:=\operatorname{lin}(K-K)$, as specified in
Theorem~\ref{thm:app-affine-fw}.  A \emph{linear minimization oracle} (LMO) on $K$
takes a vector $c$ as input and returns a point minimizing
$s\mapsto\ip{c}{s}$ over $K$.  An approximate LMO with additive error
$\delta\geq0$ returns a point $s\in K$ satisfying
\[
  \ip{c}{s}\leq\min_{v\in K}\ip{c}{v}+\delta.
\]
In our model, the oracle is required to return a vertex of
$K$ and is called with $c=\Phi(x_k)$ at iteration $k$.  An exact
vertex minimizer always exists because $K$ is a polytope; our results
allow arbitrary choices among admissible vertices.
Suppose that a solution $x^*$ lies in
$\operatorname{relint}K$.  If, for all integers $k\geq0$,
\[
  x_{k+1}=x_k+\frac{1}{k+1}(s_k-x_k),
  \qquad s_k\in\operatorname{vert}(K),
\]
where $s_k$ is the LMO output with additive error $\delta_k$, then the
transformed variables
\[
  z_k:=k(x_k-x^*),
  \qquad u_k:=s_k-x^*
\]
satisfy the additive recursion $z_{k+1}=z_k+u_k$.  The variational inequality
and the relative-interior assumption imply $\Phi(x^*)\perp T_K$.  Writing
$P_{T_K}$ for the orthogonal projection onto $T_K$ and
$A:=P_{T_K}L|_{T_K}$, the Frank--Wolfe oracle inequality therefore becomes,
for all integers $k\geq1$,
\[
  \ip{Az_k}{u_k}
  \leq \min_{u\in U}\ip{Az_k}{u}+k\delta_k,
  \qquad
  U:=\{v-x^*:v\in\operatorname{vert}(K)\},
\]
where $A$ is coercive but need not be symmetric and $\delta_k$ is the
original oracle error.  Since $0\in\conv U$, Theorem~\ref{thm:main}
applies from index $k=1$ whenever $\sup_{k\geq1}k\delta_k<\infty$.
It bounds $(z_k)_{k\geq1}$ and hence gives
$\norm{x_k-x^*}=O(k^{-1})$.  The first step satisfies $x_1=s_0\in K$,
so $\norm{z_1}\leq\operatorname{diam}(K)$ independently of the initial
point and the first oracle choice.

A central difficulty in obtaining a convergence rate is the behavior of
the linear minimization oracle on a polytope.  Arbitrarily small changes
in the linear objective can change its minimizing vertex, so successive
oracle outputs need not approach one another even when the iterates
converge.  Hough \cite[Section~2]{Hough2026} proved asymptotic convergence
for monotone variational inequalities on general compact convex sets,
including polytopes.  His rate estimates use additional geometric
regularity of the feasible set, and he identifies the jumps between
minimizing vertices as an obstacle to extending that analysis to
polytopes \cite[Sections~3.1--3.2]{Hough2026}.
Our robust cycling theorem addresses this rate question for affine
strongly monotone operators on polytopes with a relatively interior
solution.  Instead of estimating the change between successive oracle
outputs, we prove that the cumulative deviations $z_k=k(x_k-x^*)$
remain bounded.  This yields an $O(k^{-1})$ last-iterate norm bound
despite persistent switching between vertices, uniformly over all
admissible tie-breaking choices.  The argument requires neither
continuity of the vertex selection nor symmetry of the operator, and it
also allows oracle errors satisfying
$\sup_{k\geq1}k\delta_k<\infty$.
\begin{theorem}[Affine relative-interior Frank--Wolfe rate]
\label{thm:app-affine-fw}
Let $K$ be a nonempty compact polytope in a finite-dimensional Euclidean
space, let $T_K:=\operatorname{lin}(K-K)$, and let
$\Phi(x)=Lx+a$ satisfy
\[
 \ip{Lh}{h}\geq\mu\norm h^2\qquad\text{for all }h\in T_K
\]
for some $\mu>0$.  Suppose that the solution $x^*$ of
\[
 \ip{\Phi(x^*)}{s-x^*}\geq0\qquad\text{for all }s\in K
\]
belongs to $\operatorname{relint}K$.  Starting from $x_0\in K$, consider,
for all $k\in\mathbb N_0$,
\begin{align}\label{eq:affine-fw}
 x_{k+1}&=x_k+\frac1{k+1}(s_k-x_k),
 \qquad s_k\in\operatorname{vert}(K)\\
\label{eq:affine-fw-oracle}
 \ip{\Phi(x_k)}{s_k}
 &\leq\min_{s\in K}\ip{\Phi(x_k)}{s}+\delta_k,
\end{align}
where $(\delta_k)_{k\in\mathbb N_0}$ is assumed to be nonnegative.
If $\overline B:=\sup_{k\geq1}k\delta_k<\infty$, then there is a constant
$C=C(K,L,x^*)$, independent of $x_0$, the oracle choices, and the sequence
$(\delta_k)$, such that
\[
 \norm{x_k-x^*}\leq\frac{C(1+\overline B)}{k}
 \qquad\text{for all integers }k\geq1.
\]
For an exact oracle, one may take $\overline B=0$.
\end{theorem}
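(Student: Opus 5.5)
We reduce Theorem~\ref{thm:app-affine-fw} to Theorem~\ref{thm:main}, following the transformation sketched just before the statement.

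\textbf{Change of variables.} Set $E:=T_K$ with the induced inner product, $U:=\{v-x^*:v\in\operatorname{vert}(K)\}$, and $A:=P_{T_K}L|_{T_K}$. Since every $v\in K$ satisfies $v-x^*\in T_K$, we have $U\subset E$ and $U$ is finite. Because $x^*\in K=\conv\operatorname{vert}(K)$, we get $0\in\conv U$. For $h\in T_K$,
\[
\ip{Ah}{h}=\ip{Lh}{h}\geq\mu\norm h^2,
\]
so $A$ satisfies \eqref{eq:strong-positive} on $E$. Define $z_k:=k(x_k-x^*)\in E$ and $u_k:=s_k-x^*\in U$. Multiplying \eqref{eq:affine-fw} by $k+1$ gives $(k+1)x_{k+1}=kx_k+s_k$. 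Subtracting $(k+1)x^*$ yields $z_{k+1}=z_k+u_k$ for all $k\geq0$.

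\textbf{Oracle inequality.} Next we translate \eqref{eq:affine-fw-oracle} into the form \eqref{eq:trajectory} for $k\geq1$. Since $x^*\in\operatorname{relint}K$, both $x^*\pm th$ lie in $K$ for every $h\in T_K$ and small $t>0$. The variational inequality then forces $\ip{\Phi(x^*)}{h}=0$, that is, $\Phi(x^*)\perp T_K$. Hence
\[
\Phi(x_k)=\Phi(x^*)+L(x_k-x^*)=\Phi(x^*)+k^{-1}Lz_k.
\]
For any $s\in K$ we have $s-x^*\in T_K$, so
\[
\ip{\Phi(x_k)}{s}=\ip{\Phi(x_k)}{x^*}+k^{-1}\ip{Az_k}{s-x^*},
\]
using $\ip{Lz}{h}=\ip{P_{T_K}Lz}{h}$ for $h\in T_K$. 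The minimum over $K$ equals the minimum over its vertices. Multiplying \eqref{eq:affine-fw-oracle} by $k$ and cancelling the common constant gives
\[
\ip{Az_k}{u_k}\leq\min_{u\in U}\ip{Az_k}{u}+k\delta_k\leq\min_{u\in U}\ip{Az_k}{u}+\overline B
\]
for all $k\geq1$. Thus the shifted sequence $(z_{k+1})_{k\geq0}$ with increments $(u_{k+1})_{k\geq0}$ is a $\overline B$-approximate $U$-valued trajectory for $(E,U,A)$. Note that Definition~\ref{def:B-trajectory} requires the error bound at every step, which is exactly why we start at $k=1$ and not at $k=0$.

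\textbf{Conclusion.} Apply Theorem~\ref{thm:main} with constant $C_0=C(E,U,A)$; it depends only on $K,L,x^*$. This gives
\[
\norm{z_k}\leq C_0(1+\norm{z_1}+\overline B)\qquad\text{for all }k\geq1.
\]
Since $x_1=x_0+(s_0-x_0)=s_0\in K$, we have $\norm{z_1}=\norm{s_0-x^*}\leq\operatorname{diam}(K)$, independently of $x_0$ and of the oracle choices. Therefore
\[
\norm{x_k-x^*}=\frac{\norm{z_k}}{k}\leq\frac{C_0(1+\operatorname{diam}K)(1+\overline B)}{k}.
\]
This gives the claim with $C:=C_0(1+\operatorname{diam}K)$.

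\textbf{Main obstacle.} The step most prone to error is the oracle translation. It needs the relative-interior argument giving $\Phi(x^*)\perp T_K$, and care that the projection $P_{T_K}$ does not change the scores on $T_K$. Everything else is bookkeeping.
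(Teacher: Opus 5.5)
Your proof is correct and follows the paper's reduction. It uses the same $U$, the same $A=P_{T_K}L|_{T_K}$, the relative-interior argument giving $\Phi(x^*)\perp T_K$, and multiplication of the oracle inequality by $k$.

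The only difference is where the trajectory starts. The paper starts at $k=0$: since $z_0=0$, every increment exactly minimizes the zero score, so your claim that one must start at $k=1$ is unnecessary. This gives $\sup_k\norm{z_k}\leq C(1+\overline B)$ directly. Your shift via $\norm{z_1}\leq\operatorname{diam}K$ is also valid; it only costs the harmless factor $1+\operatorname{diam}K$ in the constant.
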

For quadratic minimization, the equivalence with herding and the
$O(k^{-1})$ norm rate at a relative-interior solution are already established
by Bach, Lacoste-Julien, and Obozinski
\cite[Sections~4 and~4.2]{BachEtAl2012}.   Theorem~\ref{thm:app-affine-fw} extends this
guarantee to general affine strongly monotone operators whose compression
to the feasible-direction space $T_K:=\operatorname{lin}(K-K)$ need not
be symmetric.  In particular, it imposes no extra
small-coupling condition on the skew-symmetric part, unlike the interior
saddle-point analysis of Gidel, Jebara, and Lacoste-Julien
\cite[Theorem~1(I)]{GidelEtAl2017}.  We retain the
direct harmonic recursion with one linear minimization oracle per
iteration.
The proof of Theorem~\ref{thm:app-affine-fw} and the reciprocal-step
extension are given in Subsection~\ref{subsec:affine-fw-app}.
A short \hyperref[ex:saddle]{quadratic saddle-point example} in that
subsection illustrates why nonsymmetry matters: its bilinear interaction
is modeled by the skew-symmetric part of the operator.

\paragraph{Oblique relaxation and finite coordinate correction.}

A second application concerns systems of linear inequalities and
\emph{correction} algorithms that seek a feasible point through simple
updates, without computing a projection or a matrix inverse.
Agmon \cite{Agmon1954} developed a classical relaxation method for
this feasibility problem, using corrections along constraint normals.
Amaldi and Hauser \cite{AmaldiHauser2005} later quantified boundedness
for variants of such corrections, even when the inequality system is
inconsistent.  Our application allows the correction directions to be
transformed by a nonsymmetric coercive matrix.

For $G\in\R^{n\times n}$ with positive-definite symmetric
part, consider $Gx\geq b$.  Such a matrix is invertible, so the system
is always feasible.  The question is whether the following restricted
correction rule finds a feasible point in finitely many steps.
Whenever the system is violated, increase
the coordinate indexed by a most-violated inequality by one.  This
correction can worsen other inequalities, and its length does not
decrease as the residual becomes small.  Nevertheless, the cycling
theorem implies finite termination.  The reduction uses
$z_k=x_k-G^{-1}b$ and the update set $\{0,e_1,\ldots,e_n\}$, with zero
updates after termination.  Zero is a vertex of this update hull, so
the application uses the boundary case of Theorem~\ref{thm:main}.
Boundedness
forces termination because each nonzero update increases the sum of
the coordinates by one.
Proposition~\ref{prop:oblique-relaxation} proves boundedness for greedy
corrections through a fixed coercive matrix, without assuming
feasibility.  Its concrete finite-termination consequence is the
following coordinate rule, where $e_i$ denotes the $i$th standard basis
vector.  An additive error in selecting the index is allowed, but the
stopping test checks the true residuals.

\begin{corollary}[Finite coordinate correction]
\label{cor:coordinate-termination}
Let $n\geq1$, let $G\in\R^{n\times n}$ have positive-definite symmetric
part, and let $b\in\R^n$ and $B\geq0$.  Starting from any $x_0\in\R^n$,
stop at index $k$ if $Gx_k\geq b$ componentwise.  Otherwise choose
$i_k\in\{1,\ldots,n\}$ and update according to
\begin{equation}\label{eq:coordinate-correction}
 (Gx_k-b)_{i_k}\leq\min_{1\leq i\leq n}(Gx_k-b)_i+B,
 \qquad x_{k+1}=x_k+e_{i_k}.
\end{equation}
For every admissible choice of indices, the procedure stops after a
finite number $T$ of corrections at a point satisfying $Gx_T\geq b$.
There is a constant $C=C(n,G)\geq1$, independent of $b$, $B$, $x_0$,
and the selections, such that, with $\rho=\norm{x_0-G^{-1}b}$,
\begin{equation}\label{eq:coordinate-count}
 T\leq\sqrt n\bigl[C(1+B)+(C+1)\rho\bigr].
\end{equation}
\end{corollary}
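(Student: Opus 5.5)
The plan is to reduce the coordinate correction procedure to a $B$-approximate $U$-valued trajectory with $E=\R^n$, $U:=\{0,e_1,\ldots,e_n\}$, and $A:=G$, and then apply Theorem~\ref{thm:main}. Set $x^*:=G^{-1}b$; this is well defined because $\ip{Gx}{x}\geq\mu\norm x^2$ forces $G$ to be injective. Define $z_k:=x_k-x^*$. Then $Gx_k-b=Gz_k$. The update set satisfies $0\in U\subset\conv U$, so the standing assumptions hold, and $A=G$ is coercive in the sense of \eqref{eq:strong-positive}.

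Next I extend the procedure to an infinite trajectory. While the procedure has not stopped, set $u_k:=e_{i_k}$. After the stopping index $T$, set $u_k:=0$, so that $z_k=z_T$ for all $k\geq T$. For this I would define $T$ as the first stopping index, or $T=\infty$ if the procedure never stops.

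The key step is to check the oracle inequality \eqref{eq:trajectory} in both regimes.
\begin{itemize}
\item For $k<T$ the system is violated, so $m_k:=\min_i (Gz_k)_i<0$. Since $\ip{Gz_k}{0}=0$ and $\ip{Gz_k}{e_i}=(Gz_k)_i$, we get $\min_{u\in U}\ip{Gz_k}{u}=\min\{0,m_k\}=m_k$. The selection rule \eqref{eq:coordinate-correction} then gives exactly $\ip{Gz_k}{e_{i_k}}\leq m_k+B$.
\item For $k\geq T$ we have $Gz_k=Gx_T-b\geq0$ componentwise. Hence the minimum over $U$ is $0$, attained by $u=0$, and the choice $u_k=0$ is exact.
\end{itemize}
This is the point where the stopping test must use the true residuals: it is what guarantees that the zero update is a legitimate near-minimizer after stopping. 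It is also what guarantees that the minimum equals $m_k$, and not $0$, before stopping. With this, Theorem~\ref{thm:main} gives $\norm{z_k}\leq C(1+\rho+B)$ for all $k$, with $C=C(\R^n,U,G)$ depending only on $n$ and $G$.

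Finally, termination and the count. Each nonzero update increases $\mathbf 1^{\mathsf T}z_k$ by exactly one. Hence, for any $k\leq T$,
\[
k=\mathbf 1^{\mathsf T}(z_k-z_0)\leq\sqrt n\,\norm{z_k-z_0}\leq\sqrt n\bigl(C(1+\rho+B)+\rho\bigr).
\]
This is a uniform bound, so $T$ is finite and satisfies $T\leq\sqrt n[C(1+B)+(C+1)\rho]$, which is \eqref{eq:coordinate-count}. At $T$ the stopping test holds, so $Gx_T\geq b$.

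The main obstacle is the boundary nature of the case: $0$ is a vertex of $\conv U$, not an interior point. Fortunately Theorem~\ref{thm:main} only requires $0\in\conv U$, so no extra argument is needed. Beyond that, the only delicate point is verifying the oracle inequality across the stopping time, as done above.
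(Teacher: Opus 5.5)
Your proposal is correct and follows the paper's proof essentially step for step. The shared steps are:
\begin{itemize}
\item the reduction $z_k=x_k-G^{-1}b$ with update set $\{0,e_1,\ldots,e_n\}$ and score operator $G$;
\item the observation that a negative minimum residual makes the selection rule the $B$-approximate oracle condition before stopping;
\item extension by exact zero updates after stopping;
\item the count $N=\mathbf 1^{\mathsf T}(z_N-z_0)\leq\sqrt n(\norm{z_N}+\rho)$ combined with Theorem~\ref{thm:main}.
\end{itemize}
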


For $B=0$ the rule always selects a most-violated inequality.
For $B>0$ even a nonviolated inequality can satisfy the selection
condition; finite termination still follows from the same bound.
Amaldi and Hauser \cite[Theorem~7.1]{AmaldiHauser2005} already covered
the symmetric case after a change of coordinates.  Frommer and Szyld
\cite[Section~4]{FrommerSzyld2023} analyzed greedy coordinate relaxation
for linear equalities with nonsymmetric coefficient matrices under
generalized diagonal-dominance assumptions, using residual-dependent
correction lengths.
Here we obtain finite termination for inequalities with fixed unit
corrections when the symmetric part of $G$ is positive definite,
without an additional diagonal-dominance condition or restriction on
the skew part.
Subsection~\ref{subsec:oblique-relaxation} gives the proof, the general
preconditioned formulation, and the extension to bounded positive
step lengths.  The guarantee concerns the specified correction rule,
not a general feasibility test or a runtime comparison with other solvers.
Section~\ref{sec:limitations} discusses the scope of these
consequences and possible extensions.

\subsection{Organization}

Section~\ref{sec:related} reviews perceptron cycling, affine
Frank--Wolfe, and relaxation methods for
linear systems.
Section~\ref{sec:proof} contains the proof of the main theorem, the
rounding proof for weighted updates, the explicit bound for symmetric
linear operators, and a counterexample for settings without coercivity.
Section~\ref{sec:applications} develops the two algorithmic applications:
affine Frank--Wolfe, illustrated by a quadratic saddle problem, and
oblique relaxation with finite coordinate correction.
Section~\ref{sec:limitations} describes further consequences and
generalizations, and discusses limitations and open problems.
Appendix~\ref{app:oracle-errors} gives the detailed reduction of
imperfect oracle information to an additive score error.

\section{Related work}\label{sec:related}

\paragraph{Perceptron convergence and cycling.}
Rosenblatt \cite{Rosenblatt1958} introduced the perceptron.  Agmon
\cite{Agmon1954} had earlier studied a relaxation method for consistent
systems of linear inequalities, to which the perceptron's
finite-termination theory for separable data is connected.  Novikoff
\cite{Novikoff1962} subsequently gave a short convergence proof for
perceptrons.  Shalev-Shwartz and Ben-David
\cite[Theorem~9.1]{ShalevShwartzBenDavid2014} provided a modern account.
These convergence results concern the separable case, rather than
cycling for nonseparable data.

Block and Levin \cite{BlockLevin1970} reported that Nils Nilsson and,
independently, Terry Beyer had conjectured boundedness in the nonseparable
case.  Efron \cite{Efron1964} offered an early proof of boundedness for
the perceptron correction procedure in the nonseparable case in a 1964
technical report, as reported by Block and Levin \cite{BlockLevin1970}.
Minsky and Papert \cite[Chapter~11]{MinskyPapert1969} subsequently
presented an argument for the same boundedness theorem.  Block and Levin
\cite{BlockLevin1970} identified a gap in Minsky and Papert's argument
and supplied a rigorous proof of the boundedness statement
\eqref{eq:classical-bound-intro} using a stronger lemma that bounds the
displacement along proper correction chains.  Amaldi and Hauser
\cite[Theorem~7.1]{AmaldiHauser2005} later established broader
relaxation-method boundedness theorems with explicit bounds in terms of
the constraint data and an upper bound on the step coefficients.
In fact, their Theorem~7.1 implies
Theorem~\ref{thm:main} when $A$ is symmetric positive definite, including
positive oracle errors and $0$ on the boundary of $\conv U$.
To see this, set $V:=A^{1/2}U$ and
\[
 y_k=A^{1/2}z_k,\qquad v_k=A^{1/2}u_k,\qquad
 \rho=\max\{1,B,\norm{y_0}\},\qquad q_k=y_k/\rho.
\]
Since $0\in\conv U$, the minimum score is nonpositive, and hence
\[
 q_{k+1}=q_k+\rho^{-1}v_k,\qquad
 \ip{q_k}{v_k}\leq B/\rho\leq1,\qquad \norm{q_0}\leq1.
\]
The case $U=\{0\}$ is immediate.  Otherwise, omit zero updates, which
do not change the states, and apply their theorem with the nonzero
vectors in $V$ as constraint normals and with all right-hand sides
equal to~$1$.  Their hypothesis allows the non-strict inequality
$\ip{q_k}{v_k}\leq1$, including equality at the threshold.
Their step coefficients multiply the unnormalized normals: here they
are $\rho^{-1}\in(0,1]$.  The theorem requires no positive lower bound
and provides one bound for all coefficient sequences in $[0,1]$.
The actual displacement lengths are $\rho^{-1}\norm{v_k}$, not
necessarily at most~$1$.  The theorem imposes no feasibility assumption; indeed,
the full system $\ip{v}{q}>1$ for all $v\in V$ is infeasible because
$0\in\conv V$.  Removing zero normals does not affect applicability,
whether or not the remaining system is feasible.  The proof of their
estimate applies to finite prefixes as well, so it also covers the case
of only finitely many nonzero updates.
Their explicit inequality~(7.1) depends on the fixed constraint data,
the upper bound~$1$ on the step coefficients, and
$\norm{(q_0,1)}\leq\sqrt2$; hence it bounds all $q_k$ by a constant
$M(E,V)$ independent of $\rho$.  Multiplying by $\rho$ and applying
$A^{-1/2}$ gives
$\sup_k\norm{z_k}\leq C(E,U,A)(1+\norm{z_0}+B)$.
Their bound uses a condition number of a homogenized constraint matrix;
Theorem~\ref{thm:explicit-symmetric} gives a simpler expression in terms
of the relative inradius under the additional relative-interior assumption.
This reduction, however, does not cover general nonsymmetric scores.  A common linear
change of coordinates $z\mapsto Tz$ and $u\mapsto Tu$ produces the score
$\ip{Tz}{Tu}=\ip{T^*Tz}{u}$, whose matrix is symmetric; here $^*$ denotes
the adjoint.  Moreover, writing
$A=S+J$ with $S^*=S$ and $J^*=-J$, the additional term $\ip{Jz}{u}$ is
generally unbounded as $z$ varies.  It cannot be treated as a fixed additive
error in the symmetric theorem.  Theorem~\ref{thm:main} establishes
boundedness directly for the nonsymmetric score, assuming only coercivity
and approximate global minimization.  Corollary~\ref{cor:weighted} also
allows bounded nonnegative step coefficients in this nonsymmetric setting.

Gelfand, Chen, van der Maaten, and Welling \cite{GelfandEtAl2010}
connected the classical $O(T^{-1})$ averaging consequence to herding and
its variants.  Their sign condition $\ip{w_t}{v_t}\leq0$
allows updates that need not be approximate global minimizers;
Theorem~\ref{thm:main} therefore does not subsume every such update.
Harvey and Samadi \cite{HarveySamadi2014} improved the dimension and
cardinality dependence using a different sampling algorithm.

\paragraph{Approximate and delayed selection.}
Bertsekas and Tsitsiklis \cite{BertsekasTsitsiklis1989} systematically
studied convergence of asynchronous iterations in which components are
updated using delayed information, including bounded-delay assumptions
and their interaction with contraction properties.  Gray and Neuhoff
\cite{GrayNeuhoff1998} surveyed the theory of quantization in information
processing.  Freund and Grigas
\cite[Section~5.2, Proposition~5.1]{FreundGrigas2016} analyzed inexact
gradients and linear minimization subproblems for the Frank--Wolfe method.
They compared the true and approximate linear scores at both the selected
and optimal points and also treated simultaneous gradient and subproblem
errors.

Proposition~\ref{prop:robustness-mechanisms} in
Appendix~\ref{app:oracle-errors} uses this standard comparison
and the elementary estimate
$\norm{z_k-z_{k-\tau_k}}\leq\tau\max_{u\in U}\norm u$ to give a single tolerance for all
the stated imperfections.  The error calculation itself needs neither
coercivity nor $0\in\conv U$; those hypotheses enter when
Theorem~\ref{thm:main} is applied.  Combining this reduction with the
theorem gives a uniform state bound in the nonsymmetric setting despite persistent bounded
errors, together with an $O(T^{-1})$ average-update residual by telescoping.

\paragraph{Affine Frank--Wolfe for variational inequalities.}
Frank and Wolfe \cite{FrankWolfe1956} introduced the method for smooth
convex optimization.  Jaggi \cite{Jaggi2013} provided a modern account,
and Freund and Grigas \cite{FreundGrigas2016} treated inexact gradients
and linear oracles.

Bach, Lacoste-Julien, and Obozinski \cite[Sections~4 and~4.2]{BachEtAl2012}
identified quadratic Frank--Wolfe with herding.  For harmonic steps and a
relative-interior target, their analysis yields $O(k^{-2})$ objective
error and thus $O(k^{-1})$ norm error.  These rates cover our symmetric
affine VI case after a change of coordinates.  Lacoste-Julien, Lindsten,
and Bach \cite[Supplement, Theorem~G.1]{LacosteJulienEtAl2015} obtained
an $O(k^{-1})$ norm term plus an $O(\delta)$ floor for approximate herding
with fixed original-oracle error $\delta$; our fixed additive-state
tolerance $B$ instead corresponds to $\delta_k=B/k$.
Wirth, Kerdreux, and Pokutta \cite[Theorem~3.6]{WirthEtAl2023} obtained
the same objective and norm orders for smooth strongly convex minimization
with an interior solution using steps $4/(k+4)$.

Hammond \cite{Hammond1984} studied direct conditional-gradient recursions
for asymmetric VIs.  Gidel, Jebara, and Lacoste-Julien
\cite[Theorem~1(I)]{GidelEtAl2017} established geometric convergence of
the saddle error for ordinary interior saddle-point Frank--Wolfe with
adaptive steps, under a condition relating coupling, curvature, and
geometry.  Their separate polytope result uses away steps; their
open-loop result uses different steps and controls a best-iterate gap.

Chen and Mazumdar \cite{ChenMazumdar2024} studied smoothed generalized
Frank--Wolfe, including a stochastic version.  Rahimi Baghbadorani,
Mohajerin Esfahani, and Grammatico \cite{RahimiEtAl2026} used
Frank--Wolfe to solve inner saddle problems within an accelerated method.
Neither approach uses the direct recursion \eqref{eq:affine-fw}.
Hough \cite[Section~2]{Hough2026} proved convergence of that recursion for
monotone $C^1$ operators on compact convex sets, including polytopes,
thereby resolving Hammond's conjecture.  His gap-rate estimates require
strong convexity of the feasible set.  That work left the general
polytope rate question open.

Theorem~\ref{thm:app-affine-fw} gives an $O(k^{-1})$ last-iterate norm
bound for the affine, relatively interior polytope case, with harmonic
steps and no additional restriction on the skew part.  Its constant
depends on the fixed problem data and the finite tolerance bound
$\sup_{k\geq1}k\delta_k$, not on initial points or admissible
vertex-oracle selections.
Proposition~\ref{prop:affine-fw-schedules} extends this result to general steps of the form $\alpha/(k+\beta)$ and polynomially
decaying oracle errors.

\paragraph{Oblique relaxation and coordinate corrections.}
Amaldi and Hauser \cite[Theorem~7.1]{AmaldiHauser2005} proved boundedness
for corrections along constraint normals, including inconsistent systems
and bounded nonnegative step coefficients.  For a symmetric
positive-definite preconditioner $P$, the change of variables
$y=P^{-1/2}x$ puts corrections $x\mapsto x+Pa_i$ into their framework.
Proposition~\ref{prop:oblique-relaxation} allows nonsymmetric coercive
$P$, but requires approximate minimization of the residual rather than
arbitrary selection of a violated constraint.  Thus, neither result implies
the other in general.

Frommer and Szyld \cite[Sections~2 and~4]{FrommerSzyld2023} analyzed
randomized and greedy coordinate relaxation for linear equalities.
Their nonsymmetric Gauss--Southwell results used $H$-matrix conditions,
equivalently generalized diagonal dominance, and residual-dependent
correction lengths.  Corollary~\ref{cor:coordinate-termination} concerns
inequalities, fixed positive unit corrections, and positive definiteness
of the symmetric part.  It establishes finite termination for this rule
without symmetry or an additional diagonal-dominance assumption.
It does not assert convergence to the equality solution or a
computational advantage over their methods.

Guzm\'an and Klivans \cite[Sections~2--3]{GuzmanKlivans2016} extended
chip-firing from $M$-matrices to arbitrary invertible integer matrices
by changing the cone of admissible configurations and allowing a firing
only when the new configuration remains in that cone.  Our residual
update $r\mapsto r-Ge_i$, with $r=b-Gx$, also subtracts a matrix column.
However, the selection uses a largest positive residual, and the
stopping condition is $r\leq0$ in the original coordinates.  Their
admissibility condition is different and does not directly imply this
finite-termination result.

\section{Proof of the robust cycling theorem}\label{sec:proof}

The proof proceeds by induction on $n=\dim E$.  The case $n=0$ is immediate.
If $U=\{0\}$, every trajectory is constant and
Theorem~\ref{thm:main} holds with $C=1$.  Henceforth assume $n\geq1$ and
$U\neq\{0\}$, and assume the theorem in every ambient dimension
$\ell<n$.

If the theorem failed in dimension $n$, there would be approximate
trajectories reaching target radii arbitrarily large relative to
$1+\norm{z_0}+B$.  After the preliminaries, we establish two estimates.
Subsection~\ref{subsec:case-degenerate} uses face reduction and the
induction hypothesis to bound all subsequent norms after entrance into
a fixed neighborhood of the degenerate directions above a prescribed
radial threshold.  Subsection~\ref{subsec:case-nondegenerate} uses limit
paths and the Lyapunov function $H$ to obtain a uniform decrease estimate
for segments whose unit directions remain outside that neighborhood.

Subsection~\ref{subsec:completion} constructs the counterexample sequence
and distinguishes two cases.  If such an entrance precedes the target
crossing, the entrance estimate shows that the state at first entrance
has norm at least a fixed positive fraction of the target radius.
Otherwise, the target crossing itself supplies the endpoint.
In either case, a preceding segment outside the
neighborhood has a number of updates proportional to the target radius.
The required decrease of $H$ exceeds its initial value on this segment,
contradicting nonnegativity.  Figure~\ref{fig:proof-overview} illustrates
the segment selected in each case.

\subsection{Preliminaries}

Let $R_U:=\max_{u\in U}\norm u$ denote the maximum norm of an
admissible increment.  Since $U\neq\{0\}$, we have $R_U>0$.
Write $\Sph:=\{\theta\in E:\norm{\theta}=1\}$.
For $z\in E$, define
\begin{align*}
  H(z)&:=-\min_{u\in U}\ip{Az}{u},\qquad
  \mathcal F(z):=\argmin_{u\in U}\ip{Az}{u},\\
  N_0&:=\{\theta\in\Sph:H(\theta)=0\}.
\end{align*}
The following claims record the properties used repeatedly below.

\begin{claim}[properties of the Lyapunov function]
\label{clm:H-properties}
The function $H$ is nonnegative, convex, and positively homogeneous.  For
$z\neq0$,
\[
  H(z)=\norm{z}\,H\left(\frac z{\norm z}\right).
\]
Moreover, with $b:=\norm A R_U$,
\[
  |H(z)-H(z')|\leq b\norm{z-z'}
  \qquad\text{for all }z,z'\in E.
\]
\end{claim}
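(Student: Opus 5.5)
Each property of Claim~\ref{clm:H-properties} follows directly from writing $H$ as a maximum of finitely many linear functionals:
\[
 H(z)=\max_{u\in U}\bigl(-\ip{Az}{u}\bigr)=\max_{u\in U}\ip{z}{-A^*u},
\]
where $A^*$ is the adjoint of $A$. I will use this representation throughout.

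\emph{Nonnegativity.} I will use $0\in\conv U$. Write $0=\sum_{u\in U}\lambda_u u$ with $\lambda_u\geq0$ and $\sum_u\lambda_u=1$. Then for every $z$,
\[
 \sum_u\lambda_u\ip{Az}{u}=\ip{Az}{0}=0 .
\]
A convex combination of the scores is at least their minimum, so $\min_{u\in U}\ip{Az}{u}\leq0$. Hence $H(z)\geq0$.

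\emph{Convexity and positive homogeneity.} A pointwise maximum of finitely many linear functions is convex. For homogeneity, take $t\geq0$. Linearity gives $\ip{A(tz)}{u}=t\ip{Az}{u}$, and the minimum over $U$ commutes with multiplication by the nonnegative scalar $t$, so $H(tz)=tH(z)$. For $z\neq0$, apply this with $t=\norm z$ and argument $z/\norm z$; this yields $H(z)=\norm z\,H(z/\norm z)$.

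\emph{Lipschitz bound.} Fix $z,z'$ and let $u'\in\mathcal F(z')$, so that $H(z')=-\ip{Az'}{u'}$. Since $H(z)\geq-\ip{Az}{u'}$,
\[
 H(z')-H(z)\leq\ip{A(z-z')}{u'}\leq\norm A\,\norm{z-z'}\,R_U=b\norm{z-z'},
\]
by Cauchy--Schwarz and the definition of the operator norm. Exchanging the roles of $z$ and $z'$ gives the bound on the absolute value.

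None of these steps is a real obstacle. The only point that uses a hypothesis is nonnegativity, which relies on $0\in\conv U$. The argument does not use coercivity of $A$ at all.
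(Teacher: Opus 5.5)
Your proof is correct and matches the paper's argument: nonnegativity from writing $0$ as a convex combination of $U$, and convexity and homogeneity from the finite-max-of-linear-functionals form. For the Lipschitz bound you evaluate at a minimizer $u'\in\mathcal F(z')$ and then swap the roles of $z$ and $z'$, which is just the standard proof of the inequality $|\max_u a_u-\max_u b_u|\leq\max_u|a_u-b_u|$ that the paper cites directly.
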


\begin{proof}
Since $0\in\conv U$, choose coefficients $\lambda_u\geq0$ with
$\sum_{u\in U}\lambda_u=1$ and $\sum_{u\in U}\lambda_u u=0$.  Then
\[
  \min_{u\in U}\ip{Az}{u}
  \leq\sum_{u\in U}\lambda_u\ip{Az}{u}=0,
\]
so $H(z)\geq0$.  The representation
\[
  H(z)=\max_{u\in U}\ip{-Az}{u}
\]
shows that $H$ is the maximum of finitely many linear functions and hence is
convex.  For every $\lambda\geq0$, linearity gives
$H(\lambda z)=\lambda H(z)$, proving positive homogeneity and the displayed
radial identity.

Finally, for any finite family of real numbers $(a_u)$ and $(b_u)$,
$|\max_u a_u-\max_u b_u|\leq\max_u|a_u-b_u|$.  Therefore
\begin{align*}
 |H(z)-H(z')|
 &\leq\max_{u\in U}|\ip{A(z-z')}{u}|\\
 &\leq\norm A R_U\norm{z-z'},
\end{align*}
where the last step is the Cauchy--Schwarz inequality.
\end{proof}

\begin{samepage}
\begin{claim}[compactness of the degenerate directions]
\label{clm:N0-compact}
The set $N_0$ is compact.
\end{claim}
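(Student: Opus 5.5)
The set $N_0$ is the zero set of $H$ restricted to the unit sphere $\Sph$, so I would prove compactness by showing that $N_0$ is a closed subset of a compact set. First, $\Sph$ is compact because $E$ is a finite-dimensional real inner-product space: it is closed (the preimage of $\{1\}$ under the continuous map $\norm{\cdot}$) and bounded, and the Heine--Borel theorem applies after identifying $E$ isometrically with $\R^n$.

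Next, I would use Claim~\ref{clm:H-properties}: $H$ is Lipschitz with constant $b=\norm A R_U$, so in particular it is continuous on $E$. Then
\[
  N_0=\Sph\cap H^{-1}(\{0\})
\]
is the intersection of two closed sets, hence closed. A closed subset of the compact set $\Sph$ is compact, which gives the claim.

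No step here is a genuine obstacle. The only point to record is that $N_0$ may be empty, for instance when $0\in\operatorname{int}\conv U$ and $A$ is invertible. In that case $N_0$ is trivially compact, and the later finite covering argument (the cover $W$ of $N_0$) must allow an empty family. I would remark on this so that the subsequent construction of $W$ and of the constant bounding $H$ away from zero on $\Sph\setminus W$ remains valid, using that $H$ attains a positive minimum on the compact set $\Sph\setminus W$.
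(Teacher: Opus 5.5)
Your proof is correct and is the same as the paper's: continuity of $H$ (from the Lipschitz bound in Claim~\ref{clm:H-properties}) makes $N_0=\Sph\cap H^{-1}(\{0\})$ closed, and a closed subset of the unit sphere, which is compact by Heine--Borel, is compact. Your remark about the empty case matches how the paper later handles $N_0=\varnothing$ by taking $W=\varnothing$.
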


\begin{proof}
Claim~\ref{clm:H-properties} shows that $H$ is Lipschitz and therefore
continuous.  Hence $N_0=\Sph\cap H^{-1}(\{0\})$ is closed.  The unit sphere
$\Sph$ is compact by the finite-dimensional Heine--Borel theorem in
Rudin \cite[Theorem~2.41]{RudinPMA}; consequently its closed subset $N_0$ is
compact.
\end{proof}
\end{samepage}

\begin{claim}[local stability of the minimizing face]
\label{clm:face-stability}
For $x\in E$, put
\[
  g(x):=\min\left\{
  \ip{Ax}{u}-\min_{v\in U}\ip{Ax}{v}:u\in U\setminus\mathcal F(x)
  \right\},
\]
with $g(x)=+\infty$ if $\mathcal F(x)=U$.  Suppose that
$\mathcal F(x)\neq U$ and
\[
  \norm{x'-x}\leq\frac{g(x)}{4\norm A R_U},
  \qquad 0\leq\beta<\frac{g(x)}2.
\]
Then every $\beta$-optimal element at $x'$, meaning every $u\in U$ with
\[
  \ip{Ax'}{u}\leq\min_{v\in U}\ip{Ax'}{v}+\beta,
\]
belongs to $\mathcal F(x)$.  If $\mathcal F(x)=U$, the conclusion is
automatic.
\end{claim}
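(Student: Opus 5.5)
The plan is a direct perturbation estimate: the gap $g(x)$ separates the optimal face from the other elements at $x$, and moving from $x$ to $x'$ can shift each score by at most $\norm A R_U\norm{x'-x}$. The case $\mathcal F(x)=U$ is trivial, since every $u\in U$ then lies in $\mathcal F(x)$. So assume $\mathcal F(x)\neq U$. Then $g(x)$ is a minimum of finitely many strictly positive numbers, hence $0<g(x)<\infty$, and the hypothesis on $\norm{x'-x}$ is meaningful.

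First, for every $u\in U$, the Cauchy--Schwarz inequality gives
\[
\bigl|\ip{Ax'}{u}-\ip{Ax}{u}\bigr|\leq\norm A R_U\norm{x'-x}\leq g(x)/4.
\]
Second, as in the Lipschitz part of Claim~\ref{clm:H-properties}, the two minima over $U$ differ by at most the same amount:
\[
\Bigl|\min_{v\in U}\ip{Ax'}{v}-\min_{v\in U}\ip{Ax}{v}\Bigr|\leq g(x)/4.
\]
This is exactly $|H(x')-H(x)|\leq\norm A R_U\norm{x'-x}$.

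Now take $u\in U\setminus\mathcal F(x)$. By the definition of $g(x)$,
\[
\ip{Ax}{u}\geq\min_{v\in U}\ip{Ax}{v}+g(x).
\]
Combining this with the two estimates above gives
\[
\ip{Ax'}{u}\geq\ip{Ax}{u}-\tfrac{g(x)}4\geq\min_{v\in U}\ip{Ax}{v}+\tfrac{3g(x)}4\geq\min_{v\in U}\ip{Ax'}{v}+\tfrac{g(x)}2.
\]
Since $\beta<g(x)/2$, this $u$ is not $\beta$-optimal at $x'$. By contraposition, every $\beta$-optimal element at $x'$ lies in $\mathcal F(x)$.

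There is no real obstacle here. The only point needing care is that the strict inequality $\beta<g(x)/2$ is what excludes the boundary case, and that the factor $4$ in the radius leaves exactly the required margin of $g(x)/2$ (one $g(x)/4$ loss per perturbation estimate).
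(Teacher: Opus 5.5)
Your proof is correct and follows essentially the same route as the paper: for $u\notin\mathcal F(x)$ you bound $\ip{Ax'}{u}$ from below and the minimum score at $x'$ from above, each losing at most $g(x)/4$, which leaves a margin of $g(x)/2>\beta$. The only cosmetic difference is that you take the upper bound on $\min_{v\in U}\ip{Ax'}{v}$ from the Lipschitz estimate for $H$ in Claim~\ref{clm:H-properties}, whereas the paper derives this one-sided bound directly by evaluating the score at a minimizer $v_*\in\mathcal F(x)$.
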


\begin{proof}
Assume $\mathcal F(x)\neq U$.  For every $u\in U\setminus\mathcal F(x)$,
\[
  \ip{Ax}{u}-\min_{v\in U}\ip{Ax}{v}>0.
\]
Because $U\setminus\mathcal F(x)$ is finite and nonempty, the minimum of
these positive numbers is positive; thus $g(x)>0$.  Put
$b:=\norm A R_U$.  For every $v\in U$, the Cauchy--Schwarz inequality gives
\[
 \bigl|\ip{Ax'}{v}-\ip{Ax}{v}\bigr|
 =\bigl|\ip{A(x'-x)}{v}\bigr|
 \leq\norm A\,\norm{x'-x}\,\norm v
 \leq b\norm{x'-x}.
\]
Thus changing the state from $x$ to $x'$ changes each score by at most
$b\norm{x'-x}$.  Fix $u\in U\setminus\mathcal F(x)$.  The preceding
estimate gives the lower bound
\[
 \ip{Ax'}{u}\geq\ip{Ax}{u}-b\norm{x'-x}.
\]
To bound the minimum score from above, choose a minimizer
$v_*\in\mathcal F(x)$, which exists because $U$ is finite and nonempty.
Then
\begin{align*}
 \min_{v\in U}\ip{Ax'}{v}
 &\leq\ip{Ax'}{v_*}\\
 &=\ip{Ax}{v_*}+\ip{A(x'-x)}{v_*}\\
 &\leq\ip{Ax}{v_*}+b\norm{x'-x}\\
 &=\min_{v\in U}\ip{Ax}{v}+b\norm{x'-x}.
\end{align*}
The second inequality follows from
$\ip{A(x'-x)}{v_*}\leq\norm A\,\norm{x'-x}\,\norm{v_*}
\leq b\norm{x'-x}$, since $\norm{v_*}\leq R_U$ and $b=\norm A R_U$.
The element $v_*$ need not remain a minimizer at $x'$: its score merely
provides an upper bound on the minimum at $x'$.  Subtracting this upper
bound from the lower bound on $\ip{Ax'}{u}$, and then using the definition
of $g(x)$ and the assumed bound on $\norm{x'-x}$, yields
\begin{align*}
 \ip{Ax'}{u}-\min_{v\in U}\ip{Ax'}{v}
 &\geq \ip{Ax}{u}-b\norm{x'-x}
      -\left(\min_{v\in U}\ip{Ax}{v}+b\norm{x'-x}\right)\\
 &\geq g(x)-2b\norm{x'-x}\\
 &\geq\frac{g(x)}2>\beta.
\end{align*}
Thus no element outside $\mathcal F(x)$ is $\beta$-optimal at $x'$.  If
$\mathcal F(x)=U$, every element of $U$ already belongs to $\mathcal F(x)$,
so the conclusion is immediate.
\end{proof}

\begin{claim}[directional minimizer bound]
\label{clm:directional-bound}
If $z\neq0$, $\theta=z/\norm z$, and
$w\in\conv\mathcal F(z)$, then
\[
  \ip{A\theta}{w}=-H(\theta),
  \qquad
  \norm w\geq\frac{H(\theta)}{\norm A}.
\]
\end{claim}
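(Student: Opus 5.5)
The plan is to prove the identity first for elements of $\mathcal F(z)$ and extend it to the convex hull by linearity; the norm bound then follows from Cauchy--Schwarz.

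First, for every $u\in\mathcal F(z)$ the definition of $\mathcal F$ gives $\ip{Az}{u}=\min_{v\in U}\ip{Az}{v}=-H(z)$. Since $z=\norm z\,\theta$ with $\norm z>0$, linearity of $A$ and positive homogeneity of $H$ (Claim~\ref{clm:H-properties}) yield
\[
\ip{A\theta}{u}=\frac{\ip{Az}{u}}{\norm z}=-\frac{H(z)}{\norm z}=-H(\theta).
\]
Next, take $w\in\conv\mathcal F(z)$ and write $w=\sum_{u\in\mathcal F(z)}\lambda_u u$ with $\lambda_u\geq0$ and $\sum\lambda_u=1$. Because $w\mapsto\ip{A\theta}{w}$ is linear, we get $\ip{A\theta}{w}=\sum\lambda_u(-H(\theta))=-H(\theta)$, which is the first assertion.

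For the norm bound, Cauchy--Schwarz together with $\norm\theta=1$ gives
\[
H(\theta)=|\ip{A\theta}{w}|\leq\norm A\,\norm\theta\,\norm w=\norm A\,\norm w.
\]
Here $H(\theta)\geq0$ by Claim~\ref{clm:H-properties}. If $A\neq0$ we divide by $\norm A$. The case $A=0$ cannot occur, since coercivity \eqref{eq:strong-positive} with $\mu>0$ and $E\neq\{0\}$ (we are in the case $n\geq1$) forces $\norm A\geq\mu>0$.

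There is no real obstacle. The only points needing care are that $\mathcal F(z)$ and $\mathcal F(\theta)$ coincide by positive scaling, so using $z$ versus $\theta$ is harmless, and that $\norm A>0$ so the division is legitimate.
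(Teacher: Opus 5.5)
Your proof is correct and follows essentially the same route as the paper: you establish $\ip{A\theta}{u}=-H(\theta)$ on minimizers (via positive homogeneity of $H$ where the paper notes $\mathcal F(z)=\mathcal F(\theta)$), extend it to convex combinations by linearity, and conclude with Cauchy--Schwarz. Your explicit check that $\norm A\geq\mu>0$ justifies dividing by $\norm A$, a point the paper leaves implicit.
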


\begin{proof}
Positive rescaling of the objective does not change its minimizers, so
$\mathcal F(z)=\mathcal F(\theta)$.  Every $u\in\mathcal F(\theta)$ satisfies
\[
  \ip{A\theta}{u}
  =\min_{v\in U}\ip{A\theta}{v}=-H(\theta).
\]
The same equality holds for every convex combination
$w\in\conv\mathcal F(\theta)$.  Since $H(\theta)\geq0$, the
Cauchy--Schwarz inequality yields
\[
  H(\theta)=-\ip{A\theta}{w}
  \leq\norm{A\theta}\norm w
  \leq\norm A\norm w,
\]
which proves the norm bound.
\end{proof}

\subsection{Control near degenerate directions}
\label{subsec:case-degenerate}
We now analyze $B$-approximate trajectories as defined in
Definition~\ref{def:B-trajectory}.
This subsection first establishes an entrance estimate.  We show that if an
iterate has norm above an explicit entrance threshold and its unit direction belongs to a prescribed
neighborhood of $N_0$, then every subsequent state is bounded in terms of
the entrance norm.  In the completion of the induction, this estimate
will show that any such entrance before the target radius is reached must
occur at a norm at least one third of that radius.
For each $d\in N_0$, we first identify
the face $F_d$ of updates that preserve the coordinate normal to that face.
Lemma~\ref{lem:reduction} shows that, while the selected updates belong to
$F_d$, the remaining component is a lower-dimensional approximate
trajectory.  Then
Lemma~\ref{lem:trapping-condition} gives a pointwise condition that forces
every admissible update to belong to $F_d$.  Both lemmata are used in
Proposition~\ref{prop:trapping}, together with the inductive hypothesis, to
show that explicit conditions on the norm and unit direction of one state
make this pointwise condition valid at every subsequent iterate.  Finally,
Corollary~\ref{cor:finite-cover} replaces the constants associated with an
individual $d$ by constants valid on one neighborhood of the compact set
$N_0$.

Fix $d\in N_0$ and set
\[
  F_d:=\mathcal F(d)=\{u\in U:\ip{Ad}{u}=0\},
  \qquad H_d:=(Ad)^\perp.
\]
If $F_d\neq U$, define
\[
  \delta_d:=\min_{u\in U\setminus F_d}\ip{Ad}{u}>0.
\]
Since $\ip{Ad}{d}\geq\mu>0$, the subspace $H_d=(Ad)^\perp$ has
codimension one and does not contain $d$, so
$E=\operatorname{span}\{d\}\oplus H_d$.  From this unique decomposition we
get, for every $z\in E$,
\begin{equation}\label{eq:decomposition}
  z=\alpha(z)d+y(z),\qquad
  \alpha(z):=\frac{\ip{Ad}{z}}{\ip{Ad}{d}},\qquad y(z)\in H_d.
\end{equation}
Indeed, the requirement $z-\alpha(z)d\in(Ad)^\perp$ gives
\[
  0=\ip{Ad}{z}-\alpha(z)\ip{Ad}{d},
\]
which determines $\alpha(z)$ because $\ip{Ad}{d}>0$.

We next verify that $0\in\conv F_d$.  Since $d\in N_0$, we have
\[
  0=H(d)=-\min_{u\in U}\ip{Ad}{u},
\]
and hence $\ip{Ad}{u}\geq0$ for every $u\in U$.  Because $0\in\conv U$,
there are coefficients $\lambda_u\geq0$ such that
\[
  \sum_{u\in U}\lambda_u=1,
  \qquad
  \sum_{u\in U}\lambda_u u=0.
\]
Taking the inner product with $Ad$ gives
\[
  0=\sum_{u\in U}\lambda_u\ip{Ad}{u}.
\]
Every term in this sum is nonnegative.  Therefore $\lambda_u>0$ implies
$\ip{Ad}{u}=0$, or equivalently $u\in F_d$.  The same coefficients thus
express zero as a convex combination of elements of $F_d$.

Let $P_{H_d}$ be the orthogonal projection onto $H_d$, and define the
reduced operator
\[
  A_d:=P_{H_d}A|_{H_d}\colon H_d\to H_d.
\]
Then $F_d\subset H_d$, $0\in\conv F_d$, and
\[
  \ip{A_dy}{y}=\ip{Ay}{y}\geq\mu\norm y^2
  \qquad\text{for all }y\in H_d.
\]
The reduced space $H_d$ has dimension $n-1$.  The
induction hypothesis, applied with $\ell=n-1$ to $(H_d,F_d,A_d)$, therefore
provides the constant $C(H_d,F_d,A_d)$ from Theorem~\ref{thm:main}.  For
brevity, set
\[
  C_d:=C(H_d,F_d,A_d).
\]
The subscript records that the reduced triple depends on $d$; for fixed $d$,
$C_d$ is independent of $B$, the initial state, and the admissible reduced
trajectory.  By possibly increasing $C_d$, we may assume without loss of
generality that
\begin{equation}\label{eq:Cd-at-least-one}
  C_d\geq1.
\end{equation}
This normalization is used explicitly in the initial trapping estimate.
The following lemma describes the dynamics conditional on selecting updates from
$F_d$.  The second lemma gives the explicit inequality under which the
$B$-approximate oracle cannot select an update outside $F_d$.

\begin{lemma}[reduction]\label{lem:reduction}
Suppose $(z_k)$ is $B$-approximate and $u_k\in F_d$ for
$k_0\leq k<k_1$.  Then $\alpha(z_k)=\alpha(z_{k_0})$ for
$k_0\leq k\leq k_1$, and $(y(z_k))_{k_0\leq k\leq k_1}$ is a
$B$-approximate trajectory segment for $(H_d,F_d,A_d)$.
\end{lemma}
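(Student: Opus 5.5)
We argue directly from the decomposition \eqref{eq:decomposition} and the definition of $F_d$.

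\emph{Constancy of $\alpha$.} For $k_0\leq k<k_1$ we have $z_{k+1}=z_k+u_k$ with $u_k\in F_d$, so $\ip{Ad}{u_k}=0$. Linearity of $\alpha$ gives
\[
\alpha(z_{k+1})=\alpha(z_k)+\frac{\ip{Ad}{u_k}}{\ip{Ad}{d}}=\alpha(z_k).
\]
Induction on $k$ yields $\alpha(z_k)=\alpha(z_{k_0})$ for all $k_0\leq k\leq k_1$. Since $y(\cdot)$ is linear and $u_k\in F_d\subset H_d$, we have $y(u_k)=u_k$. Hence $y(z_{k+1})=y(z_k)+u_k$, so $(y(z_k))$ is an $F_d$-valued trajectory segment in $H_d$.

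\emph{The oracle condition.} For $v\in F_d\subset H_d$, write $y_k:=y(z_k)$ and $\alpha:=\alpha(z_{k_0})$. Then
\[
\ip{A_dy_k}{v}=\ip{P_{H_d}Ay_k}{v}=\ip{Ay_k}{v}=\ip{Az_k}{v}-\alpha\ip{Ad}{v}=\ip{Az_k}{v}.
\]
The first equality uses the self-adjointness of $P_{H_d}$ and $v\in H_d$; the last uses $\ip{Ad}{v}=0$. Thus on $F_d$ the reduced score at $y_k$ coincides with the original score at $z_k$. Because $F_d\subset U$, we get $\min_{u\in U}\ip{Az_k}{u}\leq\min_{v\in F_d}\ip{Az_k}{v}$. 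Combining this with the $B$-approximate condition at $z_k$ and $u_k\in F_d$ gives
\[
\ip{A_dy_k}{u_k}=\ip{Az_k}{u_k}\leq\min_{u\in U}\ip{Az_k}{u}+B\leq\min_{v\in F_d}\ip{A_dy_k}{v}+B.
\]
This is exactly the $B$-approximate condition for the triple $(H_d,F_d,A_d)$, which completes the proof.

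The only step needing care is this score identity: the projection along $d$ is oblique, so one must check that $y_k$ (rather than an orthogonal projection of $z_k$) gives the correct reduced score. This works because $d$ contributes nothing to scores on $F_d$ and $P_{H_d}$ is invisible on $H_d$. The minimum over the smaller set $F_d$ can only be larger, so the error $B$ is unchanged.
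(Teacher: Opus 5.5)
Your proof is correct and follows the same route as the paper. You show that $\alpha$ is constant and $y(u)=u$ on $F_d$, derive the score identity $\ip{Az_k}{u}=\ip{A_dy(z_k)}{u}$ for $u\in F_d$, and restrict the minimum from $U$ to $F_d\subseteq U$; the only difference is that you spell out the score identity (self-adjointness of $P_{H_d}$ and $\ip{Ad}{u}=0$), which the paper states without detail.
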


\begin{proof}
For $u\in F_d$, \eqref{eq:decomposition} gives $\alpha(u)=0$ and $y(u)=u$.
Thus the normal coordinate is constant and
$y(z_{k+1})=y(z_k)+u_k$.  Moreover, for $u\in F_d$,
\[
  \ip{Az_k}{u}=\ip{A_dy(z_k)}{u}.
\]
Using $F_d\subseteq U$ in \eqref{eq:trajectory} now gives the required
$B$-approximate inequality over $F_d$.
\end{proof}

\begin{lemma}[trapping condition]\label{lem:trapping-condition}
Suppose $F_d\neq U$ and let $z\in E$.  If
\begin{equation}\label{eq:trapping-condition}
  \alpha(z)\,\delta_d
  >2\norm A R_U\norm{y(z)}+B,
\end{equation}
then every $B$-optimal element of $U$ at $z$ belongs to $F_d$.
If $F_d=U$, the conclusion is vacuous and holds without a condition.
\end{lemma}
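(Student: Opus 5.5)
The plan is a direct score comparison using the decomposition \eqref{eq:decomposition}. Write $z=\alpha(z)d+y(z)$. For every $u\in U$, linearity gives
\[
 \ip{Az}{u}=\alpha(z)\ip{Ad}{u}+\ip{Ay(z)}{u}.
\]
The second term is a perturbation satisfying $|\ip{Ay(z)}{u}|\leq\norm A R_U\norm{y(z)}$ by Cauchy--Schwarz. The first term separates $F_d$ from its complement. On $F_d$ it vanishes, and on $U\setminus F_d$ it is at least $\alpha(z)\delta_d$ whenever $\alpha(z)\geq0$. Note that \eqref{eq:trapping-condition} forces $\alpha(z)\delta_d>0$, hence $\alpha(z)>0$, because the right-hand side is nonnegative and $\delta_d>0$.

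The key steps, in order, are as follows. First, I bound the minimum score from above. Since $0\in\conv F_d$ (established above) and $F_d\neq\emptyset$, pick any $v_*\in F_d$. Then
\[
 \min_{v\in U}\ip{Az}{v}\leq\ip{Az}{v_*}=\ip{Ay(z)}{v_*}\leq\norm A R_U\norm{y(z)}.
\]
This only needs one element of $F_d$, which exists since $d\in N_0$ makes $F_d=\mathcal F(d)$ nonempty.

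Second, for $u\in U\setminus F_d$, I bound the score from below:
\[
 \ip{Az}{u}\geq\alpha(z)\delta_d-\norm A R_U\norm{y(z)}.
\]
Subtracting the two estimates gives
\[
 \ip{Az}{u}-\min_{v\in U}\ip{Az}{v}\geq\alpha(z)\delta_d-2\norm A R_U\norm{y(z)}>B
\]
by \eqref{eq:trapping-condition}. Hence $u$ is not $B$-optimal, and every $B$-optimal element lies in $F_d$.

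If $F_d=U$, every element of $U$ already lies in $F_d$, so the statement is trivial. I do not expect a genuine obstacle here. The only point needing care is the sign of $\alpha(z)$, which the strict inequality settles as noted above, together with using the crude bound $\norm{u}\leq R_U$ uniformly for both $u$ and $v_*$, which produces the factor $2$.
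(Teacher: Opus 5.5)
Your proof is correct and follows the paper's argument essentially step for step. You first deduce $\alpha(z)>0$ from the strict inequality. You then bound the score of any $w\in U\setminus F_d$ from below by $\alpha(z)\delta_d-\norm A R_U\norm{y(z)}$, bound $\min_{u\in U}\ip{Az}{u}$ from above by $\norm A R_U\norm{y(z)}$ using an element of $F_d$, and compare the two using the trapping condition.
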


\begin{proof}
Condition~\eqref{eq:trapping-condition} implies $\alpha(z)>0$.  Hence, for
$w\in U\setminus F_d$, the definition of $\delta_d$ gives
\[
  \ip{Az}{w}
  \geq\alpha(z)\delta_d-\norm A R_U\norm{y(z)},
\]
whereas
\[
  \min_{u\in U}\ip{Az}{u}
  \leq\min_{u\in F_d}\ip{Ay(z)}{u}
  \leq\norm A R_U\norm{y(z)}.
\]
Condition \eqref{eq:trapping-condition} makes the first quantity strictly
larger than the second plus $B$, so $w$ cannot be $B$-optimal.
\end{proof}

The proposition below closes the resulting feedback argument.  Its
assumptions on $z_{k_0}$ imply \eqref{eq:trapping-condition} at time $k_0$.  The
lower-dimensional bound supplied by Lemma~\ref{lem:reduction} and the
inductive hypothesis then keeps $y(z_k)$ below the threshold in
\eqref{eq:trapping-condition}, so the selected updates remain in $F_d$ for
all $k\geq k_0$.

\begin{proposition}[permanent trapping]\label{prop:trapping}
If $F_d\neq U$, put
\begin{equation}\label{eq:eta-K-proper}
  \eta_d:=\frac{\delta_d}{4\norm A R_U C_d},
  \qquad
  K_d:=\frac{2(2\norm A R_U C_d+1)}{\delta_d}.
\end{equation}
If $F_d=U$, put instead
\begin{equation}\label{eq:eta-K-full}
  \eta_d:=\frac1{4C_d},
  \qquad K_d:=0.
\end{equation}
Choose $\varepsilon_d>0$ small enough such that
\begin{equation}\label{eq:cone}
  \frac12\leq\alpha(\theta)\leq\frac32,
  \qquad
  \norm{y(\theta)}\leq\eta_d\alpha(\theta)
\end{equation}
for all $\theta\in\Sph$ with $\norm{\theta-d}<\varepsilon_d$.
If a $B$-approximate trajectory satisfies, at some time $k_0$,
\[
  \rho:=\norm{z_{k_0}}>2K_d(1+B),
  \qquad
  \left\lVert\frac{z_{k_0}}\rho-d\right\rVert<\varepsilon_d,
\]
then $u_k\in F_d$ and $\alpha(z_k)=\alpha(z_{k_0})$ for every $k\geq k_0$,
and
\begin{equation}\label{eq:trapping-bound}
  \norm{z_k}\leq2\rho+C_d(1+B)
  \qquad\text{for all integers }k\geq k_0.
\end{equation}
\end{proposition}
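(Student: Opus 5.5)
The plan is to argue by a first-exit-time contradiction, combining Lemma~\ref{lem:reduction}, the induction hypothesis for $(H_d,F_d,A_d)$, and Lemma~\ref{lem:trapping-condition}.

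\textbf{Setup.} Write $b:=\norm A R_U$, $\theta_0:=z_{k_0}/\rho$, $\alpha_0:=\alpha(z_{k_0})=\rho\,\alpha(\theta_0)$ and $y_0:=y(z_{k_0})=\rho\,y(\theta_0)$; here $\alpha$ and $y$ are linear. By \eqref{eq:cone},
\[
  \tfrac12\rho\leq\alpha_0\leq\tfrac32\rho,
  \qquad
  \norm{y_0}\leq\eta_d\alpha_0 .
\]
If $F_d=U$, every update lies in $F_d$ trivially. Otherwise, let $k_1\geq k_0$ be the first index with $u_{k_1}\notin F_d$, or $k_1=\infty$ if there is none. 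We want to show $k_1=\infty$.

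\textbf{Control of the reduced component.} On $[k_0,k_1]$, Lemma~\ref{lem:reduction} gives $\alpha(z_k)=\alpha_0$, and shows that $(y(z_k))_{k_0\leq k\leq k_1}$ is a $B$-approximate segment for $(H_d,F_d,A_d)$. To invoke the induction hypothesis, which is stated for infinite trajectories, we extend this finite segment beyond $k_1$ by exact minimizers over $F_d$ of the reduced score. Such minimizers always exist, and the extension is again a $B$-approximate trajectory. Theorem~\ref{thm:main} in dimension $n-1$ then yields
\[
  \norm{y(z_k)}\leq C_d\bigl(1+\norm{y_0}+B\bigr)
  \leq C_d(1+B)+C_d\eta_d\alpha_0
  \qquad (k_0\leq k\leq k_1).
\]

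\textbf{Trapping at the exit time.} Suppose $k_1<\infty$ and check \eqref{eq:trapping-condition} at $z_{k_1}$, where $\alpha(z_{k_1})=\alpha_0$. Using $2bC_d\eta_d=\delta_d/2$, the right-hand side of \eqref{eq:trapping-condition} is at most
\[
  2bC_d(1+B)+\tfrac12\alpha_0\delta_d+B
  \leq(2bC_d+1)(1+B)+\tfrac12\alpha_0\delta_d .
\]
It therefore suffices that $\tfrac12\alpha_0\delta_d>(2bC_d+1)(1+B)$, that is, $\alpha_0>\tfrac12K_d(1+B)$. This holds because $\alpha_0\geq\rho/2>K_d(1+B)$. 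Lemma~\ref{lem:trapping-condition} then forces $u_{k_1}\in F_d$, contradicting the choice of $k_1$. Hence $k_1=\infty$, which gives $u_k\in F_d$ and $\alpha(z_k)=\alpha_0$ for all $k\geq k_0$, with the reduced bound valid for all $k\geq k_0$.

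\textbf{Norm bound.} From the decomposition \eqref{eq:decomposition} with $\norm d=1$,
\[
  \norm{z_k}\leq\alpha_0+\norm{y(z_k)}
  \leq\tfrac32\rho+C_d\eta_d\alpha_0+C_d(1+B).
\]
If $F_d\neq U$, then $C_d\eta_d=\delta_d/(4b)$, and $\delta_d\leq\ip{Ad}{u}\leq b$ for any $u\in U\setminus F_d$. If $F_d=U$, then $C_d\eta_d=\tfrac14$ directly. In both cases $C_d\eta_d\alpha_0\leq\tfrac14\cdot\tfrac32\rho=\tfrac38\rho$. Hence $\norm{z_k}\leq\tfrac{15}{8}\rho+C_d(1+B)$, which is stronger than \eqref{eq:trapping-bound}.

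\textbf{Main obstacle.} The delicate point is the circularity: the reduced bound is only valid while the updates stay in $F_d$, and staying in $F_d$ requires that bound. The first-exit-time argument resolves this, since at $k_1$ the bound on $y(z_{k_1})$ is already available from the segment up to $k_1$. One also has to justify applying the induction hypothesis to a finite segment, which is handled by the extension step above. The choice of $\eta_d$ is exactly what absorbs the $\norm{y_0}$ term, making the required strict inequality reduce to $\rho>2K_d(1+B)$.
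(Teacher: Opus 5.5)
Your proof is correct and follows essentially the paper's argument. It is a first-exit contradiction that combines Lemma~\ref{lem:reduction}, the induction hypothesis on the extended reduced segment, and Lemma~\ref{lem:trapping-condition}, with the same exit-time inequality and the same final estimate $\tfrac{15}{8}\rho+C_d(1+B)$. The one difference in bookkeeping: you define $k_1$ as the first update leaving $F_d$, whereas the paper takes the first failure of \eqref{eq:trapping-condition}, and this spares you the paper's separate check at $k_0$.

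There is one harmless slip. The inequality $\tfrac12\alpha_0\delta_d>(2bC_d+1)(1+B)$ is equivalent to $\alpha_0>K_d(1+B)$, not $\alpha_0>\tfrac12K_d(1+B)$. That stronger inequality is exactly what you then verify, so the argument is unaffected.
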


\begin{proof}
Write $\alpha_0:=\alpha(z_{k_0})$ and $y_0:=y(z_{k_0})$.  From
\eqref{eq:cone},
\begin{equation}\label{eq:alpha-y-initial}
  \frac\rho2\leq\alpha_0\leq\frac{3\rho}2,
  \qquad \norm{y_0}\leq\eta_d\alpha_0.
\end{equation}

First suppose $F_d\neq U$.  By \eqref{eq:alpha-y-initial} and the assumption
$\rho>2K_d(1+B)$, we have $\alpha_0\geq\rho/2>K_d(1+B)$.
We now verify \eqref{eq:trapping-condition} at $z_{k_0}$ by the following
chain of inequalities:
\begin{align*}
  2\norm A R_U\norm{y_0}+B
  &\leq 2\norm A R_U\eta_d\alpha_0+B\\
  &=\frac{\delta_d}{2C_d}\alpha_0+B\\
  &\leq\frac{\delta_d}{2}\alpha_0+B
   <\delta_d\alpha_0.
\end{align*}
Here the penultimate inequality uses $C_d\geq1$, and the last one follows
from $\alpha_0>K_d(1+B)$ and
$K_d\geq2/\delta_d$.

Suppose that \eqref{eq:trapping-condition} fails at some state $z_k$ with
$k\geq k_0$, and define the first such index by
\[
  k_1:=\min\left\{k\in\mathbb N_0:k\geq k_0,\
    \alpha(z_k)\delta_d\leq2\norm A R_U\norm{y(z_k)}+B\right\}.
\]
The minimum exists by this supposition, and $k_1>k_0$ because the strict
inequality holds at $k_0$.  For every $k_0\leq k<k_1$,
\eqref{eq:trapping-condition} holds, so Lemma~\ref{lem:trapping-condition}
gives $u_k\in F_d$.  Lemma~\ref{lem:reduction} therefore gives a reduced
$B$-approximate trajectory segment $(y(z_k))_{k_0\leq k\leq k_1}$.
Although this segment is finite, the inductive hypothesis applies because it
extends to an infinite $B$-approximate trajectory: at every subsequent state
one may choose an exact minimizer from the nonempty finite set $F_d$, and an
exact minimizer is in particular $B$-optimal.  Hence the inductive bound
applies to the segment and gives
\[
  \alpha(z_{k_1})=\alpha_0,
  \qquad
  \norm{y(z_{k_1})}
  \leq C_d(1+\norm{y_0}+B)
  \leq C_d(1+\eta_d\alpha_0+B).
\]
Consequently
\begin{align*}
  2\norm A R_U\norm{y(z_{k_1})}+B
  &\leq\frac{\delta_d}{2}\alpha_0
       +(2\norm A R_U C_d+1)(1+B)\\
  &<\frac{\delta_d}{2}\alpha_0
       +\frac{\delta_d}{2}\alpha_0
   =\delta_d\alpha_0.
\end{align*}
Since $\alpha(z_{k_1})=\alpha_0$, this proves the strict inequality
\eqref{eq:trapping-condition} at $z_{k_1}$.  But the definition of $k_1$
requires $\alpha(z_{k_1})\delta_d\leq
2\norm A R_U\norm{y(z_{k_1})}+B$, a contradiction.
Thus \eqref{eq:trapping-condition} holds at every $z_k$ with $k\geq k_0$.
Lemmata~\ref{lem:trapping-condition} and~\ref{lem:reduction} now give
$u_k\in F_d$ and $\alpha(z_k)=\alpha_0$ for all $k\geq k_0$; the same
inductive estimate for $y(z_k)$ holds for all such $k$.

We further show that
\begin{equation}\label{eq:delta-upper}
  \delta_d\leq\norm A R_U.
\end{equation}
To this end, choose any $w\in U\setminus F_d$ attaining the
minimum that defines $\delta_d$; then
\[
  \delta_d=\ip{Ad}{w}
  \leq\norm{Ad}\norm w\leq\norm A R_U
\]
because $d\in N_0\subseteq\Sph$, hence $\norm d=1$.  Hence
$C_d\eta_d=\delta_d/(4\norm A R_U)\leq1/4$, and
\begin{align*}
  \norm{z_k}
  &\leq\alpha_0+\norm{y(z_k)}\\
  &\leq(1+C_d\eta_d)\alpha_0+C_d(1+B)\\
  &\leq\frac54\cdot\frac{3\rho}{2}+C_d(1+B)
   <2\rho+C_d(1+B).
\end{align*}

It remains to check the special case $F_d=U$.  Every increment already lies
in $F_d$, so Lemma~\ref{lem:reduction} applies for all future times without
using a gap $\delta_d$.  With the choice
$\eta_d=1/(4C_d)$,
\begin{align*}
  \norm{z_k}
  &\leq\alpha_0+C_d(1+\eta_d\alpha_0+B)\\
  &=\frac54\alpha_0+C_d(1+B)\\
  &\leq\frac{15}{8}\rho+C_d(1+B)
   <2\rho+C_d(1+B).
\end{align*}
This proves \eqref{eq:trapping-bound} in both cases.
\end{proof}

The constants in Proposition~\ref{prop:trapping} depend on $d$.  We next
cover $N_0$ by finitely many neighborhoods on the unit sphere in which
that proposition applies.  Taking the maxima of the corresponding local
constants will give one bound for every trajectory whose state has its
unit direction in this union and whose norm exceeds a common threshold.

\begin{corollary}[finite cover]\label{cor:finite-cover}
Assume the inductive hypothesis.  There are an open set
$W\subseteq\Sph$, with $N_0\subseteq W\neq\Sph$, and constants
$K\geq0$, $C_W\geq1$ such that whenever
\[
  \rho:=\norm{z_{k_0}}>2K(1+B),
  \qquad \frac{z_{k_0}}\rho\in W,
\]
one has
\[
  \norm{z_k}\leq2\rho+C_W(1+B)\qquad\text{for all integers }k\geq k_0.
\]
When $N_0=\varnothing$, one may take $W=\varnothing$, $K=0$, and
$C_W=1$; no state then satisfies the entrance assumption
$z_{k_0}/\rho\in W$.
\end{corollary}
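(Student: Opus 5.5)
The plan is a standard compactness argument on top of Proposition~\ref{prop:trapping}. First dispose of the trivial case $N_0=\varnothing$ as stated. Otherwise, for each $d\in N_0$ Proposition~\ref{prop:trapping} supplies constants $\varepsilon_d>0$, $K_d\geq0$, $C_d\geq1$. Consider the open caps
\[
  O_d:=\{\theta\in\Sph:\norm{\theta-d}<\varepsilon_d\},
\]
which are open in $\Sph$ and cover $N_0$. By Claim~\ref{clm:N0-compact}, $N_0$ is compact, so finitely many caps $O_{d_1},\dots,O_{d_m}$ already cover it. Set
\[
  W:=\bigcup_{j=1}^m O_{d_j},\qquad
  K:=\max_j K_{d_j},\qquad C_W:=\max_j C_{d_j}\geq1.
\]
The set $W$ is open in $\Sph$ and contains $N_0$.

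For the entrance estimate, suppose $\rho=\norm{z_{k_0}}>2K(1+B)$ and $z_{k_0}/\rho\in W$. Then $z_{k_0}/\rho\in O_{d_j}$ for some $j$, and $\rho>2K_{d_j}(1+B)$. Hence the hypotheses of Proposition~\ref{prop:trapping} hold for $d=d_j$, and \eqref{eq:trapping-bound} gives
\[
  \norm{z_k}\leq2\rho+C_{d_j}(1+B)\leq2\rho+C_W(1+B)
\]
for all $k\geq k_0$.

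The only nonroutine point is ensuring $W\neq\Sph$. I expect this to be the main (mild) obstacle, and it can be handled by shrinking the caps if necessary. By coercivity, $H(-\theta)$ relates to $\theta$ as follows. For $d\in N_0$, every $u\in U$ satisfies $\ip{Ad}{u}\geq0$. Since $U\neq\{0\}$ and $0\in\conv U$, I will find a direction with $H>0$: pick any nonzero $u_0\in U$ and take $\theta_0:=u_0/\norm{u_0}$. Then
\[
  H(\theta_0)\geq-\ip{A\theta_0}{u_0}\cdot(-1)\ldots
\]
That route is messy. A cleaner one is to note that $H(z)=0$ means $\ip{Az}{u}\geq0$ for all $u\in U$. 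If $N_0=\Sph$, then both $z$ and $-z$ would give $\ip{Az}{u}=0$ for all $u\in U$ and all $z$, so $A^*u=0$. Since $A$ is invertible by coercivity, this forces $u=0$ for all $u\in U$, contradicting $U\neq\{0\}$. Hence there is $\theta_*\in\Sph\setminus N_0$.

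Because $N_0$ is compact and $\theta_*\notin N_0$, we have $\operatorname{dist}(\theta_*,N_0)=:\tau>0$. Before extracting the finite subcover, replace each $\varepsilon_d$ by $\min\{\varepsilon_d,\tau/2\}$; property \eqref{eq:cone} persists for smaller radii, so Proposition~\ref{prop:trapping} still applies. Every cap then lies within distance $\tau/2$ of $N_0$, so $\theta_*\notin W$ and therefore $W\neq\Sph$.
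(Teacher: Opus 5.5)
Your proof is correct and follows the paper's argument. You take a finite subcover of the caps from Proposition~\ref{prop:trapping} and the maxima of $K_d$ and $C_d$, and you obtain $W\neq\Sph$ by shrinking the caps away from a point $\theta_*\notin N_0$, whose existence you prove exactly as the paper does via $A^{*}u=0$ and invertibility. The only difference is cosmetic: the paper shrinks each cap to radius $\min\{\varepsilon_d,\tfrac12\norm{d-\theta_*}\}$ instead of using the uniform radius $\tfrac12\operatorname{dist}(\theta_*,N_0)$. You should also delete the abandoned ``messy route'' fragment from the write-up.
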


The case $N_0=\varnothing$ means that there are no degenerate unit
directions: $H$ is strictly positive on the whole unit sphere.  By
compactness, its minimum there is positive.  No face-reduction or
entrance estimate is needed in this case; the decrease estimate of
Lemma~\ref{lem:block} applies with $W=\varnothing$, and only Case~2 occurs
in the completion of the proof.  The boundedness conclusion of
Theorem~\ref{thm:main} is unchanged.

\begin{proof}
The empty case was just described.  Suppose $N_0\neq\varnothing$.  In the
nontrivial case $U\neq\{0\}$, one has $N_0\neq\Sph$.  Indeed, if
$H(\theta)=H(-\theta)=0$ for every $\theta\in\Sph$, then for every $u\in U$
both $\ip{A\theta}{u}\geq0$ and $\ip{A\theta}{u}\leq0$.  Hence
$A^{\mathsf T}u=0$.  Condition \eqref{eq:strong-positive} makes $A$ (and
therefore $A^{\mathsf T}$) invertible, so $u=0$ for all $u\in U$, a
contradiction.

Choose $\theta_*\in\Sph\setminus N_0$.  For every $d\in N_0$, let
$\varepsilon_d>0$ be the radius from Proposition~\ref{prop:trapping} and
define
\[
  r_d:=\min\left\{\varepsilon_d,\frac12\norm{d-\theta_*}\right\}>0,
  \qquad
  V_d:=\{\theta\in\Sph:\norm{\theta-d}<r_d\}.
\]
The set $V_d$ is an open neighborhood of $d$ on the unit sphere, called a
\emph{spherical cap}.  Since $r_d\leq\varepsilon_d$, every
$\theta\in V_d$ satisfies $\norm{\theta-d}<\varepsilon_d$, which is the
condition on the state's unit direction in Proposition~\ref{prop:trapping}.
Moreover, $\theta_*\notin V_d$ because
$r_d\leq\norm{d-\theta_*}/2$.

For every $d\in N_0$, we have $d\in V_d$, so the family
$\{V_d:d\in N_0\}$ is an open cover of $N_0$.
By compactness of $N_0$ (Claim~\ref{clm:N0-compact}), there are finitely
many directions $d_1,\ldots,d_m\in N_0$ such that
\[
  N_0\subseteq\bigcup_{i=1}^m V_{d_i}.
\]
Set $W:=\bigcup_{i=1}^m V_{d_i}$.  This union is open in $\Sph$, contains
$N_0$, and omits $\theta_*$; hence $W\neq\Sph$.  The selected neighborhoods
may overlap and need not have the same radius.  Define
\[
  K:=\max_{1\leq i\leq m}K_{d_i},
  \qquad
  C_W:=\max_{1\leq i\leq m}C_{d_i}.
\]
These are maxima over finitely many selected directions, not over all of
$N_0$.  The cover and the constants are fixed once $(E,U,A)$ is fixed and
are independent of the trajectory and of the unit direction $z_{k_0}/\rho$.

To verify the claimed bound, suppose that a $B$-approximate trajectory
satisfies $\rho:=\norm{z_{k_0}}>2K(1+B)$ and $z_{k_0}/\rho\in W$.
By the definition of $W$, there is an index $i\in\{1,\ldots,m\}$ such that
$z_{k_0}/\rho\in V_{d_i}$.  Thus
\[
  \left\lVert\frac{z_{k_0}}\rho-d_i\right\rVert<r_{d_i}\leq\varepsilon_{d_i},
  \qquad
  \rho>2K(1+B)\geq2K_{d_i}(1+B).
\]
The two assumptions on $z_{k_0}$ in Proposition~\ref{prop:trapping} hold
with $d=d_i$, so it gives
\[
  \norm{z_k}\leq2\rho+C_{d_i}(1+B)
  \leq2\rho+C_W(1+B)
  \qquad\text{for all integers }k\geq k_0.
\]
This proves the uniform statement.
\end{proof}

\subsection{Decrease away from degenerate directions}
\label{subsec:case-nondegenerate}

This subsection establishes a decrease estimate for trajectory segments
whose unit directions remain outside the neighborhood of $N_0$ constructed
in Subsection~\ref{subsec:case-degenerate}.  We first introduce
space-time normalized limit paths and derive the corresponding
continuous-time decrease of $H$.  We then transfer that decrease to the
original discrete segments.  The estimate concerns these segments only;
it does not require the entire trajectory to avoid the neighborhood.
It will therefore apply in both entrance scenarios in
Subsection~\ref{subsec:completion}.

\subsubsection{Limit paths}
\label{subsubsec:limit-paths}

Consider a $B$-approximate trajectory $(z_k)$.  To prove the boundedness
property in Theorem~\ref{thm:main}, we must rule out that such trajectories
reach radii arbitrarily large relative to $1+\norm{z_0}+B$.  Since every
increment has norm at most $R_U$, changing the norm of the state by at least
$\delta R$ requires at least $\delta R/R_U$ updates.  It is therefore
necessary to control segments containing a number of updates proportional to
the radius $R$.

For this purpose, we introduce space-time normalized curves.  We first join consecutive
states of a trajectory by straight line segments.  For every $j$, we then
choose a possibly different trajectory $(z_k^j)_{k\geq0}$, a scale $R_j$,
and an initial index $q_j$.  The trajectories may have different initial
states and may make different admissible choices whenever the minimizing
increment is nonunique.  If instead the initial state and a deterministic
selection rule are fixed, the recursion determines a single trajectory; in
that case one may take $(z_k^j)=(z_k)$ for every $j$ and vary only $R_j$ and
$q_j$.  We shift $q_j$ to time zero and divide both elapsed discrete time and
the states by $R_j$.  If $s$ denotes the original interpolation time, the
normalized time is $t=(s-q_j)/R_j$, equivalently $s=q_j+R_jt$.
Thus $\lceil TR_j\rceil$ updates are sufficient to represent the
interpolated trajectory on the fixed time interval $[0,T]$, while a state
with norm at most $2R_j$ is represented by a point with norm at most $2$.
The following definition makes this construction precise.

\begin{definition}[paths, interpolation, and normalized limit paths]
\label{def:interpolation}
Let $J\subseteq\mathbb R$ be a nonempty interval.  A \emph{path in $E$ on
$J$} is a continuous map $\gamma\colon J\to E$.

Let $(z_k)_{k\geq0}$ be an infinite trajectory with increments $(u_k)$.  Its
\emph{piecewise affine interpolation}, or \emph{interpolated path}, is the
path $\widetilde z\colon[0,\infty)\to E$ defined, for every $k\geq0$ and
every $t\in[k,k+1]$, by
\begin{equation}\label{eq:interpolation}
  \widetilde z(t)
  :=(k+1-t)z_k+(t-k)z_{k+1}
  =z_k+(t-k)u_k.
\end{equation}
In particular, $\widetilde z(k)=z_k$ for every $k\geq0$.  Since
$\norm{u_k}\leq R_U$, the interpolation is $R_U$-Lipschitz.

For each $j\geq1$, let $(z^j_k)_{k\geq0}$ be an infinite trajectory.  These
trajectories need not share an initial state or a rule for resolving
nonunique minimizing increments, but they are allowed to be repeated copies
of the same trajectory.  Given positive scales $R_j$, indices
$q_j\in\mathbb N_0$, and a nonempty compact interval
$J\subseteq[0,\infty)$, define the space-time normalized interpolated paths
\[
  \widehat z_j\colon J\to E,
  \qquad
  \widehat z_j(t):=R_j^{-1}\widetilde z^j(q_j+R_jt).
\]
Any uniform limit on $J$ of a subsequence of
$(\widehat z_j)$ is called a \emph{normalized limit path}.  If, in addition,
$(z^j_k)$ is $B_j$-approximate, $\lim_{j\to\infty}R_j=\infty$, and
$\lim_{j\to\infty}B_j/R_j=0$, we call
such a limit simply a \emph{limit path}.
\end{definition}

\begin{figure}[H]
\centering
\begin{tikzpicture}[
  >=Latex,
  state/.style={circle,fill=black,inner sep=1.8pt},
  path/.style={blue!70!black,line width=1.15pt},
  increment/.style={-{Latex[length=2.2mm]},orange!80!black,line width=0.9pt}
]
  \coordinate (zkm) at (0.3,0.8);
  \coordinate (zk)  at (2.2,1.7);
  \coordinate (zkp) at (4.8,3.0);
  \coordinate (zkpp) at (7.1,2.1);
  \coordinate (zt) at ($(zk)!0.58!(zkp)$);

  \node[gray!65] at (7.45,3.35) {$E$};
  \draw[path] (zkm)--(zk)--(zkp)--(zkpp);
  \draw[increment] ($(zk)!0.08!(zkp)$)--($(zk)!0.92!(zkp)$)
    node[midway,above left] {$u_k=z_{k+1}-z_k$};

  \node[state,label=below left:$z_{k-1}$] at (zkm) {};
  \node[state,label=below:$z_k$] at (zk) {};
  \node[state,label=above:$z_{k+1}$] at (zkp) {};
  \node[state,label=below right:$z_{k+2}$] at (zkpp) {};
  \filldraw[red!80!black] (zt) circle (2.2pt);
  \node[red!70!black,below right=2pt] at (zt) {$\widetilde z(t)$};

  \coordinate (axisref) at (0,-0.45);
  \draw[gray!70,-{Latex[length=2mm]}] (-0.05,-0.45)--(7.45,-0.45)
    node[right] {$t$};
  \foreach \point/\lab in {zkm/{k-1},zk/k,zkp/{k+1},zkpp/{k+2}} {
    \draw[gray!70] (\point |- axisref) ++(0,-0.07)--++(0,0.14);
    \node[below=2pt] at (\point |- axisref) {$\lab$};
  }
  \coordinate (tau) at (zt |- axisref);
  \filldraw[red!80!black] (tau) circle (1.8pt);
  \node[above=2pt,red!70!black] at (tau) {$t\in[k,k+1]$};
\end{tikzpicture}
\caption{The piecewise affine interpolation from
\eqref{eq:interpolation}.  At integer times it passes through the discrete
states $z_k$; between $k$ and $k+1$ it traverses the segment with constant
velocity $u_k$, so that $\widetilde z(t)=z_k+(t-k)u_k$.}
\label{fig:piecewise-affine-path}
\end{figure}

We next prove the properties of these normalized paths that will be needed for
the original trajectories.  Lemma~\ref{lem:limit} shows that, when
$\lim_{j\to\infty}R_j=\infty$, $\lim_{j\to\infty}B_j/R_j=0$, and the normalized states remain bounded, the
paths have a uniformly convergent subsequence.  The limiting path evolves
within the convex hull of the exact minimizing updates at its current state.
Lemma~\ref{lem:envelope} then identifies the derivative of $H$ along this
limit with the negative quadratic form induced by $A$.  In the second part of
this subsection, these results are transferred back to the original discrete
trajectories.  Consider segments starting at $q_j$ and ending after
$\lfloor TR_j\rfloor$ updates.  The bounds on the state norms and the
condition on their unit directions in Lemma~\ref{lem:block} are required
for every $q_j\leq k\leq q_j+\lceil TR_j\rceil$.  Under these conditions,
the decrease of $H$, divided by $R_j$, has limes inferior at least $cT$,
where $c>0$ is defined in \eqref{eq:aprime}.  The completion of the induction
uses this estimate to rule out trajectories reaching radii arbitrarily
large relative to $1+\norm{z_0}+B$.

\begin{lemma}[compactness of approximate trajectories]\label{lem:limit}
Let $(R_j)_{j\geq1}$ and $(B_j)_{j\geq1}$ be sequences of positive and
nonnegative real numbers, respectively, such that
\[
  \lim_{j\to\infty}R_j=\infty,
  \qquad \lim_{j\to\infty}\frac{B_j}{R_j}=0.
\]
For each integer $j\geq1$, let $(z^j_k)_{k\in\mathbb N_0}$ be a
$B_j$-approximate trajectory.  Suppose that there exist $T>0$ and a
sequence $(q_j)_{j\geq1}$ in $\mathbb N_0$ such that, for every $j\geq1$,
\[
  \norm{z^j_k}\leq2R_j
  \quad\text{for all integers }k\text{ with }
  q_j\leq k\leq q_j+\lceil TR_j\rceil.
\]
Then the paths
\[
  \widehat z_j(t):=R_j^{-1}\widetilde z^j(q_j+R_jt),
  \qquad\text{for all }t\in[0,T],
\]
have a uniformly convergent subsequence.  Every resulting limit path $z$ is
$R_U$-Lipschitz and satisfies
\begin{equation}\label{eq:limit-inclusion}
  \dot z(t)\in\conv\mathcal F(z(t))
\end{equation}
for almost every $t\in[0,T]$.\footnote{Throughout, ``almost every'' and
``almost everywhere'' mean that the exceptional set has Lebesgue measure zero.}
\end{lemma}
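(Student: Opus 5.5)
The plan has two parts: Arzelà--Ascoli for the existence of a convergent subsequence, and a closedness argument for the differential inclusion.

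\emph{Compactness.} Each interpolation $\widetilde z^j$ is $R_U$-Lipschitz, so $\widehat z_j$ is $R_U$-Lipschitz on $[0,T]$. Indeed,
\[
\norm{\widehat z_j(t)-\widehat z_j(t')}=R_j^{-1}\norm{\widetilde z^j(q_j+R_jt)-\widetilde z^j(q_j+R_jt')}\leq R_U|t-t'|.
\]
For $t\in[0,T]$ the point $q_j+R_jt$ lies in $[q_j,q_j+\lceil TR_j\rceil]$. There $\widetilde z^j$ is a convex combination of states of norm at most $2R_j$, so $\norm{\widehat z_j(t)}\leq2$. Arzelà--Ascoli then yields a uniformly convergent subsequence. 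The Lipschitz bound passes to the limit $z$, so $z$ is $R_U$-Lipschitz and hence differentiable almost everywhere by Rademacher's theorem.

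\emph{Inclusion.} Fix $t\in(0,T)$ at which $\dot z(t)$ exists, and fix $h>0$ small. The derivative is the limit of difference quotients:
\[
\frac{z(t+h)-z(t)}h=\lim_j\frac{\widehat z_j(t+h)-\widehat z_j(t)}h .
\]
The $j$-th quotient is an average of the increments $u^j_k$ over the real-time window $[q_j+R_jt,\,q_j+R_j(t+h)]$, weighted by overlap lengths. Up to an $O(1/(R_jh))$ error from the partial steps at the two ends, it is a convex combination of those $u^j_k$. The key point is that every such $u^j_k$ belongs to $\mathcal F(z(t))$ once $h$ is small and $j$ is large. I will verify this with Claim~\ref{clm:face-stability} applied at $x=z(t)$, assuming $\mathcal F(z(t))\neq U$; the case $\mathcal F(z(t))=U$ is trivial.

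The oracle condition \eqref{eq:trajectory} at $z^j_k$ is invariant under positive scaling once $B$ is scaled. Dividing by $R_j$, the increment $u^j_k$ is $(B_j/R_j)$-optimal at the normalized state $z^j_k/R_j$. For $k$ in the window, this normalized state lies within
\[
\norm{z^j_k/R_j-z(t)}\leq R_Uh+R_U/R_j+\norm{\widehat z_j-z}_\infty
\]
of $z(t)$. Choose $h$ so that $R_Uh<g(z(t))/(8\norm A R_U)$. Then take $j$ large so that the remaining terms are also below $g(z(t))/(8\norm A R_U)$ and $B_j/R_j<g(z(t))/2$. Claim~\ref{clm:face-stability} now gives $u^j_k\in\mathcal F(z(t))$.

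Passing to the limit, the difference quotient lies within $O(1/(R_jh))$ of the compact convex set $\conv\mathcal F(z(t))$. Letting $j\to\infty$ places $(z(t+h)-z(t))/h$ in $\conv\mathcal F(z(t))$ for every small $h$, and letting $h\downarrow0$ gives \eqref{eq:limit-inclusion} by closedness.

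The main obstacle is this uniformity: the window length $h$ must be chosen depending on the gap $g(z(t))$ \emph{before} letting $j\to\infty$. This is legitimate because $t$ is fixed and $g(z(t))>0$. The boundary bookkeeping of partial steps is routine.
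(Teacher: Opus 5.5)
Your proof is correct and follows essentially the same route as the paper. Arzel\`a--Ascoli gives the convergent subsequence; Claim~\ref{clm:face-stability} is then applied at $x=z(t)$ with the rescaled error $B_j/R_j$, with $h$ chosen from the gap $g(z(t))$ before letting $j\to\infty$ and then $h\downarrow0$. The $O(1/(R_jh))$ hedge is unnecessary, since the difference quotient of the piecewise affine interpolation is exactly a convex combination of the increments on the grid intervals it traverses.
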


\begin{proof}
The normalized paths are uniformly bounded by $2$ in norm and are
$R_U$-Lipschitz, so the
finite-dimensional Arzel\`a--Ascoli theorem
(Theorem~\ref{thm:appendix-aa}) applies.  Let $z$ be a uniform limit.
By Theorem~\ref{thm:appendix-lipschitz}, the Lipschitz path $z$ is
differentiable at every $t\in(0,T)$ except for a set of Lebesgue measure
zero.  Fix such a point $t$ and set $x:=z(t)$.  If
$\mathcal F(x)=U$, then, for every $h>0$ with $t+h\leq T$, the quotient
\[
  \frac{\widehat z_j(t+h)-\widehat z_j(t)}h
\]
belongs to $\conv U$: it is a convex combination of the slopes on the grid
intervals traversed between $t$ and $t+h$.  Uniform convergence and the
closedness of $\conv U$ give the same conclusion for the corresponding
difference quotient of $z$.  Letting $h\downarrow0$ proves
\eqref{eq:limit-inclusion} at $t$.  We may therefore suppose that
$\mathcal F(x)\neq U$.  Let
\[
  r:=\frac{g(x)}{4\norm A R_U}>0
\]
and choose $h>0$ such that $t+h\leq T$ and
\begin{equation}\label{eq:short-limit-interval}
  R_Uh<\frac r2.
\end{equation}
For $s\in[t,t+h]$, Lipschitz continuity gives
$\norm{z(s)-x}\leq R_Uh<r/2$.  Uniform convergence, together with the fact
that the normalized mesh size is $R_j^{-1}$, therefore gives an index $j_1$
such that, for every $j\geq j_1$, the left endpoint of every normalized grid
interval meeting $[t,t+h]$ lies within distance $r$ of $x$.  Since
$\lim_{j\to\infty}B_j/R_j=0$, we may increase $j_1$ so that the normalized optimality error
at every such grid point is smaller than $g(x)/2$ whenever $j\geq j_1$.
Claim~\ref{clm:face-stability} then implies that
every increment used by $\widehat z_j$ on $[t,t+h]$ belongs to
$\mathcal F(x)$.  Hence
\[
  \frac{\widehat z_j(t+h)-\widehat z_j(t)}h
  \in\conv\mathcal F(x).
\]
Letting $j\to\infty$ gives the same inclusion for the difference quotient of
$z$.  Since arbitrarily small positive $h$ satisfy
\eqref{eq:short-limit-interval}, letting $h\downarrow0$ yields
\eqref{eq:limit-inclusion}.
\end{proof}

The differentiation of a finite maximum used below is the classical
envelope principle of Danskin \cite{Danskin1966}.  Hofbauer and Sandholm
state its almost-everywhere form for finitely many Lipschitz functions
\cite[Theorem~A.4]{HofbauerSandholm2009}.  A closely related calculation
for the gap along best-reply dynamics appears in
Hadikhanloo et al.\ \cite[Eq.~(26)]{HadikhanlooEtAl2022}.  We include the argument because the
precise identity below also uses the convexified differential inclusion and
must hold almost everywhere along a merely Lipschitz path.

\begin{lemma}[finite-action envelope identity]\label{lem:envelope}
Let $z\colon[0,T]\to E$ be Lipschitz and satisfy
\eqref{eq:limit-inclusion}.  Then $H\circ z$ is Lipschitz and
\begin{equation}\label{eq:envelope}
  \frac{d}{dt}H(z(t))
  =-\ip{A\dot z(t)}{\dot z(t)}
  \leq-\mu\norm{\dot z(t)}^2
\end{equation}
for almost every $t$.
\end{lemma}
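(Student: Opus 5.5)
The approach is a Danskin-type envelope argument on top of the Lipschitz bound for $H$. By Claim~\ref{clm:H-properties}, $H$ is $b$-Lipschitz with $b=\norm A R_U$. Composed with the Lipschitz path $z$, this makes $H\circ z$ Lipschitz, hence differentiable almost everywhere. The exceptional sets are three: where $z$ is not differentiable, where $H\circ z$ is not differentiable, and where the inclusion \eqref{eq:limit-inclusion} fails. Each has measure zero, so we work at a point $t$ outside their union. Put $x:=z(t)$ and $v:=\dot z(t)\in\conv\mathcal F(x)$.

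At such a $t$ we compute the derivative by squeezing one-sided difference quotients between two bounds.

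For the upper bound, fix any $u_*\in\mathcal F(x)$. Then for every $s$,
\[
  H(z(s))\geq-\ip{Az(s)}{u_*},
\]
with equality at $s=t$. Consequently the difference quotient of $H\circ z$ is at least that of the linear function $s\mapsto-\ip{Az(s)}{u_*}$ for $s>t$, and at most it for $s<t$. Since $H\circ z$ is differentiable at $t$, comparing the two one-sided limits gives
\[
  \frac{d}{dt}H(z(t))=-\ip{Av}{u_*}\qquad\text{for every }u_*\in\mathcal F(x).
\]
This step is the standard envelope trick. Averaging with the convex weights that represent $v$ in $\conv\mathcal F(x)$ then yields
\[
  \frac{d}{dt}H(z(t))=-\ip{Av}{v}.
\]

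The only subtle point is to make sure differentiability of $H\circ z$ is used, so that both one-sided inequalities pin down a single value; no continuity of the selection is needed. Finally, coercivity \eqref{eq:strong-positive} gives $-\ip{Av}{v}\leq-\mu\norm v^2$, which is \eqref{eq:envelope}.

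The main obstacle is bookkeeping the null sets correctly rather than any estimate. In particular we must use the equality $H(z(t))=-\ip{Az(t)}{u_*}$ only at the fixed point $t$, while the inequality holds at all nearby $s$.
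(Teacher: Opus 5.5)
Your proof is correct, and it takes a different and shorter route than the paper's. You start from the a priori almost-everywhere differentiability of the scalar Lipschitz function $H\circ z$. You then use a Fermat-type sandwich: for any $u_*\in\mathcal F(z(t))$, the function $s\mapsto H(z(s))+\ip{Az(s)}{u_*}$ is nonnegative, vanishes at $s=t$, and is differentiable at $t$, so its derivative there is zero. This gives $(H\circ z)'(t)=-\ip{A\dot z(t)}{u_*}$ for every active $u_*$, and averaging against the convex weights of $\dot z(t)$ finishes the argument.

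The paper does not assume that $H\circ z$ is differentiable. It partitions $[0,T]$ into the measurable sets $E_F=\{t:\mathcal F(z(t))=F\}$ and uses the Lebesgue density theorem (Theorem~\ref{thm:appendix-density}) to show that $\ip{A\dot z(t)}{u}$ takes a common value $c_t$ for all $u\in F$. It then invokes Claim~\ref{clm:face-stability} to write $H$ locally as a maximum over $F$ only, and Lemma~\ref{lem:appendix-finite-max} to \emph{derive} differentiability of $H\circ z$ at $t$ with derivative $-c_t$.

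Your route needs no partition, no measurability check, no density points and no face-stability claim. It also barely uses finiteness of $U$: it only needs $H$ to be a Lipschitz maximum of linear functions attained at some $u_*$. The paper's route shows, a bit more concretely, that the identity holds at every density point where $z$ is differentiable, but nothing downstream needs this.

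Two cosmetic points:
\begin{itemize}
\item The map $s\mapsto-\ip{Az(s)}{u_*}$ is not linear in $s$. It is simply differentiable at $t$ because $z$ is.
\item You should restrict to $t\in(0,T)$ so that both one-sided difference quotients exist. This only excludes a null set.
\end{itemize}
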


\begin{proof}
The Lipschitz assertion follows from Claim~\ref{clm:H-properties}.  For
$F\subseteq U$, let
$E_F:=\{t\in[0,T]:\mathcal F(z(t))=F\}$.  At every time $t$, the
finite nonempty set $U$ has a unique nonempty set of minimizers
$\mathcal F(z(t))$.  Thus $t$ belongs to exactly one of the sets $E_F$:
their union is $[0,T]$, and distinct such sets are disjoint.  The nonempty
sets $E_F$ therefore form a finite partition of $[0,T]$.  Each is
measurable, since membership is specified by the equalities
$\ip{Az(t)}{u}+H(z(t))=0$ for $u\in F$ and the strict inequalities
$\ip{Az(t)}{u}+H(z(t))>0$ for $u\in U\setminus F$, all involving
continuous functions of $t$.
By Theorems~\ref{thm:appendix-lipschitz}
and~\ref{thm:appendix-density}, for almost every $t$ the path is
differentiable, \eqref{eq:limit-inclusion} holds, and $t$ is a density point
of the member $E_F$ containing it.  Fix such a $t\in(0,T)$ and write
$F=\mathcal F(z(t))$.

Fix $u,w\in F$.  For every $s\in E_F$, both $u$ and $w$ minimize the
score at $z(s)$, so $\ip{Az(s)}{u-w}=0$.
Because $t$ is a density point of $E_F$, there are nonzero numbers $h_n$
with $\lim_{n\to\infty}h_n=0$ such that $t+h_n\in E_F$.  The score difference vanishes both
at $t$ and at $t+h_n$, and hence
\[
  0=\frac{\ip{Az(t+h_n)}{u-w}-\ip{Az(t)}{u-w}}{h_n}
   =\ip{A\frac{z(t+h_n)-z(t)}{h_n}}{u-w}.
\]
Since $z$ is differentiable at $t$, letting $n\to\infty$ yields
$\ip{A\dot z(t)}{u-w}=0$.  Thus the numbers
$\ip{A\dot z(t)}{u}$ have a common value, denoted by $c_t$, for all
$u\in F$.  This argument does not require $E_F$ to contain an interval
around $t$; the sequence provided by the density-point property suffices.

Next, \eqref{eq:limit-inclusion} gives coefficients $\lambda_u\geq0$,
for $u\in F$, such that
$\sum_{u\in F}\lambda_u=1$ and
$\dot z(t)=\sum_{u\in F}\lambda_u u$.  Consequently,
\[
  \ip{A\dot z(t)}{\dot z(t)}
  =\sum_{u\in F}\lambda_u\ip{A\dot z(t)}{u}
  =\sum_{u\in F}\lambda_u c_t=c_t.
\]

It remains to identify the derivative of $H\circ z$.
Continuity of $z$ and Claim~\ref{clm:face-stability}, applied with
$x=z(t)$ and $\beta=0$, give $\mathcal F(z(s))\subseteq F$ for all
$s$ sufficiently close to $t$; if $F=U$, this inclusion is automatic.
Thus, near $t$,
\[
  H(z(s))=\max_{u\in F}\ip{-Az(s)}{u}.
\]
For every $u\in F$, differentiability of $z$ gives the same constant and
linear terms in the expansion
\[
  \ip{-Az(t+h)}{u}=H(z(t))-c_t h+r_u(h),
  \qquad \lim_{h\to0}\frac{|r_u(h)|}{|h|}=0.
\]
Since $F$ is finite, $\lim_{h\to0}\max_{u\in F}|r_u(h)|/|h|=0$ as well.
Taking the maximum therefore gives
$H(z(t+h))=H(z(t))-c_t h+o(|h|)$, as in
Lemma~\ref{lem:appendix-finite-max}.  Hence $H\circ z$ is differentiable
at $t$, and the preceding calculation of $c_t$ yields
\[
  \frac{d}{dt}H(z(t))=-c_t
  =-\ip{A\dot z(t)}{\dot z(t)}.
\]
Finally, \eqref{eq:strong-positive}, applied to $\dot z(t)$, bounds this
derivative above by $-\mu\norm{\dot z(t)}^2$.
The points excluded at the start of the argument form a set of Lebesgue
measure zero.  Thus both conclusions in \eqref{eq:envelope} hold almost
everywhere on $[0,T]$.
\end{proof}

\subsubsection{Discrete decrease estimate}
\label{subsubsec:block-decrease}

We next prove an asymptotic decrease estimate for sequences of trajectory
segments whose unit directions remain outside a fixed neighborhood of
$N_0\subseteq\Sph$.  The segments considered in Lemma~\ref{lem:block}
have lengths proportional to $R_j$, state norms between
$\varepsilon R_j$ and $2R_j$, and additive oracle errors $B_j$ from
Definition~\ref{def:B-trajectory} satisfying
$\lim_{j\to\infty}B_j/R_j=0$, where $\lim_{j\to\infty}R_j=\infty$.
The limit-path results give a uniform negative
rate of change of $H$ along their normalized limits.  The lemma transfers
this estimate to the discrete segments: their decrease in $H$, divided
by $R_j$, has a strictly positive limes inferior.  This does not assert
decrease at every update.  Both entrance scenarios in the completion of
the induction provide segments satisfying the lemma's assumptions.

Let $W\subseteq\Sph$ be open, contain $N_0$, and not equal $\Sph$; when
$N_0=\varnothing$, we allow $W=\varnothing$.  Compactness gives
\begin{equation}\label{eq:aprime}
  a':=\min_{\theta\in\Sph\setminus W}H(\theta)>0,
  \qquad
  c:=\mu\left(\frac{a'}{\norm A}\right)^2>0.
\end{equation}

To explain the constants in \eqref{eq:aprime}, consider trajectory segments
starting at indices $q_j$ and ending after $\lfloor TR_j\rfloor$ updates.
For a fixed $\varepsilon>0$, assume that the state norms stay between
$\varepsilon R_j$ and $2R_j$ and that their unit directions stay outside
$W$, for every index $q_j\leq k\leq q_j+\lceil TR_j\rceil$.
Since $H$ is continuous and is strictly
positive on the compact set $\Sph\setminus W$, the first constant in
\eqref{eq:aprime} satisfies $a'>0$.  Claim~\ref{clm:directional-bound} then implies
$\norm{\dot z(t)}\geq a'/\norm A$ for almost every $t$ along every limit
path obtained from such segments.  With the second constant in
\eqref{eq:aprime}, Lemma~\ref{lem:envelope} yields
$(H\circ z)'(t)\leq-c$ almost everywhere.  The next lemma transfers this
continuous-time estimate to the original segments and shows that their
decrease in $H$, divided by $R_j$, has limes inferior at least $cT$.

\Needspace{14\baselineskip}
\begin{lemma}[segment decrease]\label{lem:block}
Let $(R_j)_{j\geq1}$ and $(B_j)_{j\geq1}$ be sequences of positive and
nonnegative real numbers, respectively, such that
\[
  \lim_{j\to\infty}R_j=\infty,
  \qquad \lim_{j\to\infty}\frac{B_j}{R_j}=0.
\]
For each integer $j\geq1$, let $(z^j_k)_{k\in\mathbb N_0}$ be a
$B_j$-approximate trajectory.  Suppose that there exist $\varepsilon>0$,
$T>0$, and a sequence $(q_j)_{j\geq1}$ in $\mathbb N_0$ such that, for
every $j\geq1$,
\[
  \varepsilon R_j\leq\norm{z^j_k}\leq2R_j,
  \qquad
  \frac{z^j_k}{\norm{z^j_k}}\notin W
\]
for all integers $k$ with $q_j\leq k\leq q_j+\lceil TR_j\rceil$.  Then
\[
 \liminf_{j\to\infty}
 \frac{H(z^j_{q_j})-
 H(z^j_{q_j+\lfloor TR_j\rfloor})}{R_j}
 \geq cT.
\]
\end{lemma}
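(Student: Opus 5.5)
I will argue by contradiction through a subsequence. Suppose the $\liminf$ is smaller than $cT$. Then there are $\eta>0$ and a subsequence, not relabelled, along which
\[
  \frac{H(z^j_{q_j})-H(z^j_{q_j+\lfloor TR_j\rfloor})}{R_j}\leq cT-\eta
\]
for every $j$. The hypotheses of Lemma~\ref{lem:limit} hold on this subsequence: the norm bound $\norm{z^j_k}\leq2R_j$ on the index window, $R_j\to\infty$, and $B_j/R_j\to0$. Passing to a further subsequence, the normalized paths $\widehat z_j$ therefore converge uniformly on $[0,T]$ to an $R_U$-Lipschitz limit path $z$. This limit satisfies $\dot z(t)\in\conv\mathcal F(z(t))$ for almost every $t$.

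Next I transfer the geometric constraints to the limit. Every $t\in[0,T]$ is a limit of grid points $k/R_j$ with $q_j\leq q_j+k\leq q_j+\lceil TR_j\rceil$. Uniform convergence, together with the $R_U/R_j$ mesh bound on interpolation error, then gives $\varepsilon\leq\norm{z(t)}\leq2$. The set $\Sph\setminus W$ is closed and the normalization map is continuous away from zero, so $z(t)/\norm{z(t)}\notin W$ as well. Hence $H(z(t)/\norm{z(t)})\geq a'$. By Claim~\ref{clm:directional-bound}, applied with $\theta=z(t)/\norm{z(t)}$ and $w=\dot z(t)\in\conv\mathcal F(z(t))$, we get $\norm{\dot z(t)}\geq a'/\norm A$ for almost every $t$. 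Lemma~\ref{lem:envelope} then gives $(H\circ z)'(t)\leq-\mu\norm{\dot z(t)}^2\leq-c$ almost everywhere. Since $H\circ z$ is Lipschitz, hence absolutely continuous, integrating yields $H(z(0))-H(z(T))\geq cT$.

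Finally I return to the discrete quantities. By positive homogeneity (Claim~\ref{clm:H-properties}),
\[
  R_j^{-1}H(z^j_{q_j})=H(\widehat z_j(0)),\qquad
  R_j^{-1}H(z^j_{q_j+\lfloor TR_j\rfloor})=H(\widehat z_j(\lfloor TR_j\rfloor/R_j)).
\]
We have $\lfloor TR_j\rfloor/R_j\to T$. Since $\widehat z_j$ is $R_U$-Lipschitz and converges uniformly to $z$, the second quantity tends to $H(z(T))$, and the first tends to $H(z(0))$; both use the Lipschitz bound on $H$. So the normalized decrease converges to $H(z(0))-H(z(T))\geq cT$, which contradicts the bound $cT-\eta$.

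The delicate step is the middle one, where the outside-$W$ condition is passed to the limit. We need $\norm{z(t)}\geq\varepsilon>0$ so that the direction is defined, and closedness of $\Sph\setminus W$. We also need the inclusion from Lemma~\ref{lem:limit} at exactly the same full-measure set of times where Lemma~\ref{lem:envelope} applies. Both requirements are met, so the main work is the bookkeeping of grid points versus continuous times.
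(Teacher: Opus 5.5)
Your proof is correct and follows essentially the same route as the paper: pass to a subsequence, obtain a limit path via Lemma~\ref{lem:limit}, transfer the norm and outside-$W$ constraints to the limit using closedness of $\Sph\setminus W$, combine Claim~\ref{clm:directional-bound} with Lemma~\ref{lem:envelope} to get $(H\circ z)'\le -c$, and return to the discrete endpoints via homogeneity and the Lipschitz bounds. The only cosmetic difference is that you argue by contradiction with a margin $\eta$, whereas the paper first bounds the normalized decrease so the $\liminf$ is finite and then takes a subsequence converging to it; your version does not need that preliminary boundedness step.
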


\begin{proof}
Set
\[
 \Delta_j:=\frac{H(z^j_{q_j})-
 H(z^j_{q_j+\lfloor TR_j\rfloor})}{R_j}.
\]
Because $H$ is $b$-Lipschitz, where $b=\norm A R_U$, and each update has
norm at most $R_U$, we have
\[
 |\Delta_j|\leq\frac{bR_U\lfloor TR_j\rfloor}{R_j}\leq bR_UT.
\]
Thus $L:=\liminf_{j\to\infty}\Delta_j$ is finite.  By the definition of
the limes inferior, for every integer $m\geq1$ all sufficiently late
terms exceed $L-1/m$, and terms smaller than $L+1/m$ occur at arbitrarily
large indices.  We can therefore choose strictly increasing indices
$j_m$ with $|\Delta_{j_m}-L|<1/m$, so that
$\lim_{m\to\infty}\Delta_{j_m}=L$.  This is convergence along a
subsequence, not attainment of $L$ by a term of the sequence.
Apply Lemma~\ref{lem:limit} along this subsequence and take a further
subsequence on which the normalized paths converge uniformly.  The
scalar limit remains $L$; we relabel the resulting subsequence by $j$.
The $O(R_j^{-1})$ distance between the interpolation
and adjacent grid points shows that the limit obeys
$\norm{z(t)}\geq\varepsilon$ and
$z(t)/\norm{z(t)}\in\Sph\setminus W$.  By
Claim~\ref{clm:directional-bound},
\[
  \norm{\dot z(t)}\geq\frac{a'}{\norm A}
  \quad\text{for almost every }t.
\]
Lemma~\ref{lem:envelope} yields $(H\circ z)'\leq-c$.  Application of the
fundamental theorem of integration
(Theorem~\ref{thm:appendix-lipschitz}) gives
\begin{equation}\label{eq:limit-path-decrease}
  H(z(0))-H(z(T))\geq cT.
\end{equation}
To identify the discrete endpoint, set
\[
  t_j:=\frac{\lfloor TR_j\rfloor}{R_j}.
\]
Then $0\leq T-t_j<R_j^{-1}$ and
$\widehat z_j(t_j)=R_j^{-1}z^j_{q_j+\lfloor TR_j\rfloor}$.  Since
$\widehat z_j$ is $R_U$-Lipschitz and $H$ is $b$-Lipschitz,
\[
 \left|
   \frac{H(z^j_{q_j+\lfloor TR_j\rfloor})}{R_j}
   -H(\widehat z_j(T))
 \right|
 \leq \frac{bR_U}{R_j}.
\]
The right-hand side has limit zero because
$\lim_{j\to\infty}R_j=\infty$.
Likewise, positive homogeneity gives
$R_j^{-1}H(z^j_{q_j})=H(\widehat z_j(0))$.  Uniform convergence and
continuity of $H$ therefore show that, along the subsequence chosen above,
the normalized discrete decrease converges to
$H(z(0))-H(z(T))$.  Hence $L=H(z(0))-H(z(T))\geq cT$ by
\eqref{eq:limit-path-decrease}, which proves the
claim.
\end{proof}

\subsection{Completion of the induction}\label{subsec:completion}

We combine Corollary~\ref{cor:finite-cover} and Lemma~\ref{lem:block}
according to the two-case construction illustrated in
Figure~\ref{fig:proof-overview} in
Subsection~\ref{subsec:proof-technique-intro}.

\begin{proof}[Completion of the proof of Theorem~\ref{thm:main}]
Suppose, for a contradiction, that the conclusion fails for $(E,U,A)$.  The
negation of the claimed bound
says that, for every integer $j\geq1$, there are $B_j\geq0$, a
$B_j$-approximate trajectory $(z^j_k)$, and an index at which
\[
  \norm{z^j_k}>j(1+\norm{z^j_0}+B_j).
\]
Set
\[
  R_j:=j\rho_j,
  \qquad \rho_j:=1+\norm{z^j_0}+B_j,
\]
and let $k_j$ be the first index with $\norm{z^j_{k_j}}\geq R_j$.  In
particular,
\begin{equation}\label{eq:scales}
  \lim_{j\to\infty}R_j=\infty,\qquad
  \lim_{j\to\infty}\frac{\rho_j}{R_j}=0,
  \qquad \lim_{j\to\infty}\frac{B_j}{R_j}=0.
\end{equation}

Choose $W,K,C_W$ as in Corollary~\ref{cor:finite-cover}, and $a',c$ as in
\eqref{eq:aprime}.  Put
\[
  b:=\norm A R_U,\qquad b_1:=bR_U,\qquad
  T_0:=\frac1{8R_U},
\]
and choose
\begin{equation}\label{eq:epsilon}
  0<\varepsilon\leq
  \min\left\{\frac1{12},\frac{cT_0}{4b}\right\}.
\end{equation}

Since $\rho_j/R_j=1/j$, $\norm{z_0^j}\leq\rho_j$, and $R_j\geq j$, choose
an integer $j_0$ satisfying
\[
  j_0>\max\left\{
    \frac{2K}{\varepsilon}, 3C_W, \frac1\varepsilon, 16R_U
  \right\}.
\]
Then, for every $j\geq j_0$,
\begin{equation}\label{eq:j0-conditions}
  \varepsilon R_j>2K(1+B_j),\qquad
  C_W(1+B_j)\leq\frac{R_j}{3},\qquad
  \norm{z_0^j}\leq\varepsilon R_j,\qquad
  R_j\geq16R_U.
\end{equation}

For every $j\geq j_0$, define
\[
  \mathcal E_j^W:=\left\{
    k\in\{0,\ldots,k_j\}:
    \norm{z_k^j}\geq\varepsilon R_j,
    \ \frac{z_k^j}{\norm{z_k^j}}\in W
  \right\}.
\]
If $\mathcal E_j^W\neq\varnothing$, then it is a nonempty finite set, so its
minimum exists; in this case set $k_j^W:=\min\mathcal E_j^W$.  If
$\mathcal E_j^W=\varnothing$, set $k_j^W:=+\infty$.  Finally, put
$k^*_j:=\min\{k_j^W,k_j\}$.  We claim that, for every $j\geq j_0$,
\begin{equation}\label{eq:critical-radius}
  \norm{z^j_{k^*_j}}\geq\frac{R_j}{3}.
\end{equation}
\emph{Case~1: $k_j^W<k_j$.}  Then $k^*_j=k_j^W$,
$\mathcal E_j^W\neq\varnothing$ and $k_j^W=\min\mathcal E_j^W$.
This case is represented by the green trajectory in
Figure~\ref{fig:proof-overview}, with the entrance state
$z^j_{k_j^W}$ labeled $z_e$.  Define
\[
  R^e_j:=\norm{z^j_{k_j^W}}
  =\norm{z^j_{\min\mathcal E_j^W}}
  \geq\varepsilon R_j.
\]
By
\eqref{eq:j0-conditions}, $R^e_j>2K(1+B_j)$, and
Corollary~\ref{cor:finite-cover} gives
\[
  R_j\leq\norm{z^j_{k_j}}
  \leq2R^e_j+C_W(1+B_j)
  \leq2R^e_j+\frac{R_j}{3}.
\]
Thus $R^e_j\geq R_j/3$.

\emph{Case~2: $k_j^W\geq k_j$.}  Then $k^*_j=k_j$, and
$\norm{z^j_{k^*_j}}\geq R_j$ by the definition of $k_j$.
The red trajectory in Figure~\ref{fig:proof-overview} illustrates this
case, with the target-crossing state $z^j_{k_j}$ labeled $z_{k_R}$.
This case includes an entrance occurring exactly at the target crossing,
as well as $k_j^W=+\infty$.

Thus \eqref{eq:critical-radius} holds in both cases.  We now construct
a preceding segment whose unit directions lie outside $W$ and apply
the same decrease estimate to it.  Let
\[
  k'_j:=\max\{k\leq k^*_j:\norm{z^j_k}\leq\varepsilon R_j\},
  \qquad s_j:=k'_j+1,
  \qquad \ell_j:=k^*_j-1.
\]
The set defining $k'_j$ is nonempty for every $j\geq j_0$ by
\eqref{eq:j0-conditions}.  Moreover,
$\norm{z^j_{k^*_j}}\geq R_j/3>\varepsilon R_j$, so $k'_j<k^*_j$.  If
$s_j=k^*_j$, then the single update from $k'_j$ to $k^*_j$ would satisfy
\[
  R_U\geq\norm{z^j_{k^*_j}-z^j_{k'_j}}
  \geq\left(\frac13-\varepsilon\right)R_j,
\]
which is impossible because $\varepsilon\leq1/12$ and $R_j\geq16R_U$.
Hence $s_j\leq k^*_j-1=\ell_j$.  For every
$s_j\leq k\leq\ell_j$,
\begin{equation}\label{eq:good-segment}
  \varepsilon R_j<\norm{z^j_k}<R_j,
  \qquad \frac{z^j_k}{\norm{z^j_k}}\notin W.
\end{equation}
In Figure~\ref{fig:proof-overview}, this is the thick green or red segment
from $z_s$ to $z_\ell$; the next state, at the entrance or target crossing,
is not part of this segment.
Moreover,
\[
  \norm{z^j_{s_j}}\leq\varepsilon R_j+R_U,
  \qquad
  \norm{z^j_{\ell_j}}\geq\frac{R_j}{3}-R_U.
\]
Because every step has length at most $R_U$, \eqref{eq:epsilon} and
\eqref{eq:j0-conditions} give
\begin{align*}
  (\ell_j-s_j)R_U
  &\geq\norm{z^j_{\ell_j}-z^j_{s_j}}\\
  &\geq\norm{z^j_{\ell_j}}-\norm{z^j_{s_j}}\\
  &\geq\left(\frac13-\varepsilon\right)R_j-2R_U\\
  &\geq\frac{R_j}{8}=T_0R_UR_j.
\end{align*}
Hence $\ell_j-s_j\geq T_0R_j$ for every $j\geq j_0$.  We may therefore
write
$\ell_j-s_j\geq\lceil T_0R_j\rceil$ and apply
Lemma~\ref{lem:block} on the first $\lfloor T_0R_j\rfloor$ steps of the segment
\eqref{eq:good-segment}.  It gives
\begin{equation}\label{eq:lower-decrease}
  \liminf_{j\to\infty}
  \frac{H(z^j_{s_j})-
  H(z^j_{s_j+\lfloor T_0R_j\rfloor})}{R_j}
  \geq cT_0.
\end{equation}
On the other hand, Claim~\ref{clm:H-properties} and one bounded step from
$k'_j$ to $s_j$ give
\begin{align*}
  H(z^j_{s_j})-
  H(z^j_{s_j+\lfloor T_0R_j\rfloor})
  &\leq H(z^j_{s_j})\\
  &\leq H(z^j_{k'_j})+bR_U\\
  &\leq b\varepsilon R_j+b_1.
\end{align*}
Thus the limes superior of the quotient in \eqref{eq:lower-decrease} is at most
$b\varepsilon\leq cT_0/4$, a contradiction.  The induction is complete.
\end{proof}

\Needspace{12\baselineskip}
\subsection{Proof of Corollary~\ref{cor:weighted}}
\label{subsec:proof-weighted}

We reduce weighted updates to unit updates by recording the accumulated
weight of each vector in $U$.  Rounding these weights down to integers
leaves a bounded remainder.  We then show that the resulting unit updates
satisfy the approximate oracle condition with a uniformly enlarged error.

\begin{proof}
Let $C=C(E,U,A)\geq1$ be the constant in Theorem~\ref{thm:main}, and put
\[
  S_U:=\sum_{u\in U}\norm u,
  \qquad
  C_{\mathrm w}:=C\bigl(1+2\norm A R_US_U\bigr)+S_U.
\]
If $\overline w=0$ or $U=\{0\}$, every state equals $z_0$, so the claimed bound
holds.  First consider $\overline w=1$.  For every $u\in U$ and every state index
$k$, define
\[
  a_u(k):=\sum_{\substack{0\leq i<k\\u_i=u}}w_i,
  \qquad
  q_k:=z_0+\sum_{u\in U}\lfloor a_u(k)\rfloor u.
\]
Here $a_u(k)$ is the total weight assigned to $u$ before step $k$.
Telescoping \eqref{eq:weighted-recursion} gives
\begin{equation}\label{eq:weighted-rounding-remainder}
  z_k-q_k=\sum_{u\in U}
      \bigl(a_u(k)-\lfloor a_u(k)\rfloor\bigr)u,
  \qquad \norm{z_k-q_k}\leq S_U.
\end{equation}
Only $a_{u_k}$ changes at step $k$, and its increase is $w_k\leq1$.
Consequently,
\[
  q_{k+1}=q_k+\xi_ku_k,
  \qquad
  \xi_k:=\lfloor a_{u_k}(k)+w_k\rfloor
             -\lfloor a_{u_k}(k)\rfloor\in\{0,1\}.
\]
Keep only the updates for which $\xi_k=1$.  They form a unit-step
sequence starting at $z_0$; between retained updates, $q_k$ is unchanged.
For each retained update and every $v\in U$, the oracle condition and
\eqref{eq:weighted-rounding-remainder} imply
\begin{align*}
  \ip{Aq_k}{u_k-v}
  &=\ip{Az_k}{u_k-v}+\ip{A(q_k-z_k)}{u_k-v}\\
  &\leq B+\norm A\norm{q_k-z_k}\norm{u_k-v}\\
  &\leq B+2\norm A R_US_U.
\end{align*}
The obtained sequence is therefore a
$(B+2\norm A R_US_U)$-approximate $U$-valued trajectory or a finite
segment of one.  If only finitely many updates are retained, extend
that sequence by exact minimizers, which exist because $U$ is finite
and nonempty.  Theorem~\ref{thm:main} and
\eqref{eq:weighted-rounding-remainder} now give, for every state index $k$,
\[
  \norm{z_k}
  \leq C\bigl(1+\norm{z_0}+B+2\norm A R_US_U\bigr)+S_U.
\]

For general $\overline w>0$, apply this estimate to the states
$z_k/\overline w$, the weights $w_k/\overline w$, and the additive oracle
error $B/\overline w$.  Multiplication by $\overline w$ yields
\[
  \norm{z_k}
  \leq C\bigl(\norm{z_0}+B\bigr)
       +\overline w\bigl[C(1+2\norm A R_US_U)+S_U\bigr]
  \leq C_{\mathrm w}\bigl(\overline w+\norm{z_0}+B\bigr),
\]
since $C_{\mathrm w}\geq C$.  The argument applies to finite and infinite
recursions alike.
\end{proof}

On a finite horizon, one may take $\overline w$ to be the largest weight and $B$
to be the largest score error on that horizon.  The same constant
$C_{\mathrm w}(E,U,A)$ applies independently of the horizon length.

\subsection{Proof of Theorem~\ref{thm:explicit-symmetric}}
\label{subsec:proof-explicit-symmetric}

The proof uses symmetry to transform the score into a Euclidean inner
product.  The relative inradius then gives a direct radial decrease estimate.

\begin{proof}
The square root $A^{1/2}$ exists by the spectral theorem for symmetric
positive-definite matrices; see Horn and Johnson
\cite[Section~7.2]{HornJohnson2013}.  Let $L:=\operatorname{span}V$,
let $P_L$ be the orthogonal projection onto $L$, and set
\[
  \mathbb B_L:=\{v\in L:\norm v\leq1\},
  \qquad r=\sup\{s>0:s\mathbb B_L\subseteq\conv V\}>0.
\]
Put
$y_k:=A^{1/2}z_k$, $v_k:=A^{1/2}u_k$, and decompose
$y_k=p_k+q_k$ with $p_k:=P_Ly_k$ and
$q_k:=(\Id_E-P_L)y_k$.  In particular,
\[
  y_0=A^{1/2}z_0,\qquad
  p_0=P_Ly_0,\qquad q_0=(\Id_E-P_L)y_0.
\]
Since every $v_k$ belongs to $L$,
$q_k=q_0$ for all $k$.  Moreover,
\[
 \ip{Az_k}{u}=\ip{y_k}{A^{1/2}u}
 =\ip{p_k}{A^{1/2}u}.
\]
Because $r\mathbb B_L\subseteq\conv V$, one has
\[
 \min_{v\in V}\ip{p_k}{v}\leq-r\norm{p_k}.
\]
The approximate oracle condition therefore gives
\[
 \ip{p_k}{v_k}\leq-r\norm{p_k}+B.
\]
Consequently,
\begin{equation}\label{eq:explicit-symmetric-drift}
 \norm{p_{k+1}}^2
 \leq\norm{p_k}^2-2r\norm{p_k}+2B+R^2.
\end{equation}
Whenever $\norm{p_k}>(2B+R^2)/(2r)$, the right-hand side of
\eqref{eq:explicit-symmetric-drift} is at most $\norm{p_k}^2$.  Otherwise
$\norm{p_{k+1}}\leq\norm{p_k}+R$.  Induction yields
\[
 \sup_{k\geq0}\norm{p_k}
 \leq\max\left\{\norm{p_0},R+\frac{2B+R^2}{2r}\right\}.
\]
Since $p_k\perp q_0$ and
$\norm z\leq\lambda_{\min}^{-1/2}\norm{A^{1/2}z}$, we obtain the sharper bound
\begin{equation}\label{eq:explicit-symmetric-bound}
 \sup_{k\geq0}\norm{z_k}
 \leq
 \frac1{\sqrt{\lambda_{\min}}}
 \left(
   \norm{q_0}^2+
   \max\left\{
      \norm{p_0},
      R+\frac{2B+R^2}{2r}
   \right\}^2
 \right)^{1/2}.
\end{equation}
For the simpler estimate in Theorem~\ref{thm:explicit-symmetric}, set
$c:=R+(2B+R^2)/(2r)$.  The triangle inequality in $\R^2$ gives
\[
 \bigl(\norm{q_0}^2+(\norm{p_0}+c)^2\bigr)^{1/2}
 \leq \bigl(\norm{q_0}^2+\norm{p_0}^2\bigr)^{1/2}+c
 =\norm{y_0}+c.
\]
Since $\max\{\norm{p_0},c\}\leq\norm{p_0}+c$ and
$\norm{y_0}\leq\sqrt{\lambda_{\max}}\norm{z_0}$, this proves
\eqref{eq:explicit-symmetric-simple}.
\end{proof}

In particular, the conclusion of Theorem~\ref{thm:main} holds under these
assumptions with the explicit choice
\[
 C_{\mathrm{sym}}
 :=\max\left\{
   \sqrt{\frac{\lambda_{\max}}{\lambda_{\min}}},
   \frac{R+R^2/(2r)}{\sqrt{\lambda_{\min}}},
   \frac1{r\sqrt{\lambda_{\min}}}
 \right\}.
\]

\subsection{Why coercivity is necessary}\label{subsec:skew-counterexample}

The coercivity assumption cannot be dropped, even when $A$ is
skew-symmetric.  Consider
\[
 E=\R^2,\qquad
 A=\begin{pmatrix}0&-1\\1&0\end{pmatrix},\qquad
 U=\{\pm e_1,\pm e_2\}.
\]
Here $0\in\conv U$ but $\ip{Az}{z}=0$ for every $z$.  Starting at the origin,
choose first $u_0=-e_2$ and thereafter choose an exact minimizer of
$u\mapsto\ip{Az_k}{u}$, resolving corner ties outwards.  The first two updates
reach the corner $(-1,-1)$.  For example, at the corner $(r,-r)$ choose
$-e_2$;
the subsequent unique minimizers move left until the trajectory reaches
$(-r-1,-r-1)$.  Apply the rotated version of the same rule at the other
corners.  The transition from a corner of $\ell_\infty$-radius $r$ to the
next corner of radius $r+1$ takes $2r+2$ steps.  Hence, after $n$ corner
transitions, the radius is of order $n$ and the elapsed time is of order
$n^2$.  Consequently
\[
 \norm{z_k}=\Theta(\sqrt{k})
\]
along this trajectory, and in particular it is unbounded.  Other
tie-breaking choices can produce a bounded cycle; the example shows that a
bound uniform over all admissible selections fails once coercivity is
dropped.

For the cube-vertex example in Figure~\ref{fig:cube-skew}, take the same
matrix $A$, but $U=\{-1,1\}^2$ and $z_0=(3/4,2/5)$.
Both state coordinates retain nonzero fractional parts, so the exact
minimizer is always unique.  More explicitly, for every $m\in\mathbb N_0$,
\[
  z_{k_m}=(4m+7/4,-3/5),\qquad k_m=8m^2+6m+1.
\]
Indeed, $z_1=(7/4,-3/5)$.  Starting at $z_{k_m}$, the oracle successively
selects $(-1,-1)$, $(-1,1)$, $(1,1)$, and $(1,-1)$ for
$4m+2$, $4m+3$, $4m+4$, and $4m+5$ updates, respectively.
These $16m+14=k_{m+1}-k_m$ updates end at
$(4(m+1)+7/4,-3/5)$, proving the formula by induction.
In particular, the trajectory in panel (c) is unbounded.

\section{Applications}\label{sec:applications}

This section develops two applications of the cycling theorem.
Subsection~\ref{subsec:affine-fw-app} proves last-iterate convergence of the Frank--Wolfe
algorithm for affine monotone VIs on polytopes. We give a further application for quadratic saddle-point problems.
Subsection~\ref{subsec:oblique-relaxation} proves boundedness of oblique
relaxation and finite termination of coordinate corrections for linear
inequalities.  

\subsection{Affine variational inequalities with a relative-interior solution}
\label{subsec:affine-fw-app}
Let us consider~\eqref{eq:affine-fw} and~\eqref{eq:affine-fw-oracle} under the stated assumptions
of Theorem~\ref{thm:app-affine-fw}.
\begin{proof}[Proof of Theorem~\ref{thm:app-affine-fw}]
Let $P_K$ be the orthogonal projection onto $T_K$ and define
\[
 U_*:=\{s-x^*:s\in\operatorname{vert}(K)\}\subset T_K,
 \qquad A_*:=P_KL|_{T_K}.
\]
Since $x^*\in K=\conv\operatorname{vert}(K)$, one has
$0\in\conv U_*$.  Moreover, for every $h\in T_K$,
\[
 \ip{A_*h}{h}=\ip{Lh}{h}\geq\mu\norm h^2.
\]

Because $x^*\in\operatorname{relint}K$, for every $h\in T_K$ there is
$t_h>0$ such that $x^*\pm t_hh\in K$.  Applying the variational inequality
to these two points gives
\begin{equation}\label{eq:interior-orthogonality}
 \ip{\Phi(x^*)}{h}=0\qquad\text{for all }h\in T_K.
\end{equation}
Set
\[
 z_k:=k(x_k-x^*)\in T_K,
 \qquad u_k:=s_k-x^*\in U_*.
\]
The harmonic update gives the exact additive identity
\[
 z_{k+1}=z_k+u_k,
 \qquad z_0=0.
\]
For $u=s-x^*\in U_*$ and $k\geq1$, affinity and
\eqref{eq:interior-orthogonality} give
\begin{align*}
 k\ip{\Phi(x_k)}{u}
 &=k\ip{\Phi(x^*)+L(x_k-x^*)}{u}\\
 &=\ip{Lz_k}{u}
 =\ip{A_*z_k}{u}.
\end{align*}
Multiplying \eqref{eq:affine-fw-oracle} by $k$ therefore yields
\[
 \ip{A_*z_k}{u_k}
 \leq\min_{u\in U_*}\ip{A_*z_k}{u}+k\delta_k
 \leq\min_{u\in U_*}\ip{A_*z_k}{u}+\overline B.
\]
At $k=0$, every element of $U_*$ minimizes the zero score.  Thus $(z_k)$ is
a $\overline B$-approximate trajectory for $(T_K,U_*,A_*)$.
Theorem~\ref{thm:main} gives
\[
 \sup_{k\geq0}\norm{z_k}\leq C(1+\overline B).
\]
Dividing by $k$ proves the claimed estimate for $x_k-x^*$.
\end{proof}

The error requirement here concerns the transformed scores: it is
$\sup_{k\geq1}k\delta_k<\infty$, not merely boundedness of the original
errors $\delta_k$.  For example, suppose that at each iteration the
available score of every vertex $s$ differs from $\ip{\Phi(x_k)}{s}$ by
at most $\varepsilon_k\geq0$, and the selected vertex minimizes the
available scores up to an error $\eta_k\geq0$.  The score comparison in
Appendix~\ref{app:oracle-errors} shows that one may take
$\delta_k=\eta_k+2\varepsilon_k$.
Thus the harmonic $O(k^{-1})$ norm bound applies if
$\sup_{k\geq1}k(\eta_k+2\varepsilon_k)<\infty$.
A fixed positive tolerance in the original scores is not covered by
this rate guarantee.

The weighted cycling corollary also gives rates when the numerator and
offset of the reciprocal step size are changed.

\begin{proposition}[Other reciprocal step sizes and oracle accuracies]
\label{prop:affine-fw-schedules}
Under the assumptions on $K$, $L$, and $x^*$ in
Theorem~\ref{thm:app-affine-fw}, fix $\beta\geq\alpha>0$ and replace
\eqref{eq:affine-fw} by
\begin{equation}\label{eq:fw-reciprocal-update}
  x_{k+1}=(1-\gamma_k)x_k+\gamma_ks_k,
  \qquad \gamma_k=\frac{\alpha}{k+\beta},
  \qquad s_k\in\operatorname{vert}(K)
  \quad\text{for all }k\in\mathbb N_0,
\end{equation}
starting from any $x_0\in K$ and keeping the oracle condition
\eqref{eq:affine-fw-oracle}.  For exact oracles, there is a constant
$C_{\mathrm{ex}}=C_{\mathrm{ex}}(K,L,x^*,\alpha,\beta)$ such that
\begin{equation}\label{eq:fw-reciprocal-exact}
  \norm{x_k-x^*}\leq
  \frac{C_{\mathrm{ex}}}{(k+1)^{\min\{\alpha,1\}}}
  \qquad\text{for every integer }k\geq1.
\end{equation}
More generally, for every $p>0$ there is a constant
$C_p=C_p(K,L,x^*,\alpha,\beta,p)$ such that, for every $D\geq0$, if
$0\leq\delta_k\leq D(k+1)^{-p}$ for all integers $k\geq1$, then
\begin{equation}\label{eq:fw-reciprocal-inexact}
  \norm{x_k-x^*}\leq
  \frac{C_p(1+D)}{(k+1)^{\min\{\alpha,1,p\}}}
  \qquad\text{for every integer }k\geq1.
\end{equation}
The constants are independent of $x_0$, the admissible vertex choices,
and the error sequence; $C_p$ is also independent of $D$.  No upper bound
on $\delta_0$ is required.
\end{proposition}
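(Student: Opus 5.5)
The plan is to reduce \eqref{eq:fw-reciprocal-update} to the weighted recursion of Corollary~\ref{cor:weighted}, reusing the setup of the proof of Theorem~\ref{thm:app-affine-fw}. I keep $U_*:=\{s-x^*:s\in\operatorname{vert}(K)\}\subset T_K$ and $A_*:=P_KL|_{T_K}$. As shown there, $0\in\conv U_*$, $A_*$ is coercive on $T_K$, and \eqref{eq:interior-orthogonality} gives
\[
  \ip{\Phi(x)}{u}=\ip{A_*(x-x^*)}{u}
  \qquad\text{for }x\in K,\ u\in U_*.
\]
Setting $e_k:=x_k-x^*$ and $u_k:=s_k-x^*$, the update becomes $e_{k+1}=(1-\gamma_k)e_k+\gamma_ku_k$. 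Since $\beta\geq\alpha$, we have $\gamma_0\leq1$, but $\gamma_0=1$ is possible. I therefore start at $k=1$, where $\gamma_k=\alpha/(k+\beta)<1$. Here $x_1\in K$, so $\norm{e_1}\leq\operatorname{diam}K$ regardless of $x_0$ and of $\delta_0$; this is why no bound on $\delta_0$ is needed.

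\textbf{Integrating factor.} For $k\geq1$ define
\[
  P_1:=1,\qquad P_k:=\prod_{i=1}^{k-1}(1-\gamma_i)^{-1},\qquad z_k:=P_ke_k .
\]
Then
\[
  z_{k+1}=z_k+w_ku_k,\qquad w_k:=P_{k+1}\gamma_k\geq0 .
\]
Multiplying the oracle inequality \eqref{eq:affine-fw-oracle} by $P_k>0$ and using the displayed score identity gives
\[
  \ip{A_*z_k}{u_k}\leq\min_{u\in U_*}\ip{A_*z_k}{u}+P_k\delta_k .
\]
So on any finite horizon $1\leq k\leq N$ the sequence satisfies the hypotheses of Corollary~\ref{cor:weighted}, with
\[
  \overline w_N:=\max_{1\leq k<N}w_k,\qquad B_N:=\max_{1\leq k<N}P_k\delta_k .
\]
The finite-recursion clause of that corollary, with a constant independent of the horizon, then gives
\[
  \norm{z_N}\leq C_{\mathrm w}\bigl(\overline w_N+\operatorname{diam}K+B_N\bigr).
\]

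\textbf{Asymptotics of the factor.} I need two-sided bounds $c\,(k+1)^\alpha\leq P_k\leq C\,(k+1)^\alpha$ for $k\geq1$, with constants depending only on $\alpha,\beta$. I expect this step to be the main technical point. I would prove it by writing
\[
  \log P_k=-\sum_{i=1}^{k-1}\log(1-\gamma_i)=\sum_{i=1}^{k-1}\gamma_i+O\Bigl(\sum_i\gamma_i^2\Bigr).
\]
The $\gamma_i$ are bounded away from $1$ for $i\geq1$, because $\gamma_1=\alpha/(1+\beta)<1$ and $\gamma_i$ is decreasing, so the quadratic remainder is uniformly summable. The main term $\sum_{i<k}\alpha/(i+\beta)=\alpha\log k+O(1)$ by comparison with an integral. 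Alternatively, one can use the Gamma-function identity
\[
  P_k=\frac{\Gamma(k+\beta)\,\Gamma(1+\beta-\alpha)}{\Gamma(1+\beta)\,\Gamma(k+\beta-\alpha)}
\]
together with Gautschi-type bounds. Consequently $w_k\asymp(k+1)^{\alpha-1}$. This gives $\overline w_N\lesssim1$ if $\alpha\leq1$ and $\overline w_N\lesssim N^{\alpha-1}$ if $\alpha>1$. With $\delta_k\leq D(k+1)^{-p}$ we get
\[
  B_N\lesssim D\max_{k<N}(k+1)^{\alpha-p}\lesssim D\bigl(1+N^{(\alpha-p)_+}\bigr).
\]

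\textbf{Conclusion.} Dividing by $P_N\gtrsim(N+1)^\alpha$ gives
\[
  \norm{x_N-x^*}\lesssim(N+1)^{-\alpha}\bigl(1+N^{(\alpha-1)_+}+D(1+N^{(\alpha-p)_+})\bigr).
\]
The right-hand side is at most a constant times $(1+D)(N+1)^{-\min\{\alpha,1,p\}}$, which is \eqref{eq:fw-reciprocal-inexact}. Setting $D=0$ gives \eqref{eq:fw-reciprocal-exact}. All constants depend only on $K,L,x^*,\alpha,\beta$ (and on $p$ in the inexact case), through $C_{\mathrm w}(T_K,U_*,A_*)$, $\operatorname{diam}K$ and the Gamma-ratio bounds. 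They do not depend on $x_0$, on the vertex choices, or on $D$.
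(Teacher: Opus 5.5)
Your proposal is correct and follows essentially the same route as the paper. It uses the same integrating factor (your $P_k$ is the paper's $\rho_k$) started at index $1$, the finite-horizon form of Corollary~\ref{cor:weighted} with $\norm{z_1}\leq\operatorname{diam}K$, and the same logarithmic growth estimate $\rho_k\asymp(k+1)^\alpha$, which relies on $\gamma_i\leq\alpha/(1+\beta)<1$ for $i\geq1$. Your Gamma-function identity is a valid alternative for the growth bounds, and the case $k=1$ that you leave implicit follows directly from $\norm{x_1-x^*}\leq\operatorname{diam}K$.
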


\begin{proof}[Proof of Proposition~\ref{prop:affine-fw-schedules}]
Use $U_*$ and $A_*$ from the preceding proof.  Since $0<\gamma_k\leq1$,
all iterates belong to $K$.  Start the normalization at index $1$, so that
the possible unit first step when $\beta=\alpha$ causes no division by
zero.  Define, for every integer $k\geq1$,
\begin{equation}\label{eq:fw-reciprocal-scaling}
  \rho_1:=1,\qquad
  \rho_{k+1}:=\frac{\rho_k}{1-\gamma_k},\qquad
  w_k:=\rho_{k+1}\gamma_k,\qquad
  z_k:=\rho_k(x_k-x^*).
\end{equation}
Here $\gamma_k<1$ for $k\geq1$, so every $\rho_k$ is positive.  The
update and the identity $\rho_{k+1}(1-\gamma_k)=\rho_k$ give
\[
  z_{k+1}=z_k+w_ku_k,\qquad u_k:=s_k-x^*\in U_*.
\]
By affinity and \eqref{eq:interior-orthogonality}, multiplying the oracle
inequality by $\rho_k$ gives
\[
  \ip{A_*z_k}{u_k}
  \leq\min_{u\in U_*}\ip{A_*z_k}{u}+\rho_k\delta_k.
\]
Let $d_K:=\operatorname{diam}K$.  For an integer $T\geq2$, put
\[
  \overline w_T:=\max_{1\leq k<T}w_k,\qquad
  B_T:=\max_{1\leq k<T}\rho_k\delta_k.
\]
The finite-horizon form of Corollary~\ref{cor:weighted}, applied from
index $1$ to index $T$, and $\norm{z_1}=\norm{x_1-x^*}\leq d_K$ imply
\begin{equation}\label{eq:fw-reciprocal-horizon}
  \norm{x_T-x^*}
  \leq\frac{C_{\mathrm w}}{\rho_T}\bigl(d_K+\overline w_T+B_T\bigr),
  \qquad C_{\mathrm w}=C_{\mathrm w}(T_K,U_*,A_*).
\end{equation}
In particular, the transformed weights and additive oracle errors need not be
bounded over the entire infinite sequence.

We next estimate the quantities in \eqref{eq:fw-reciprocal-horizon}.
There are positive constants $c_0,c_1,c_2$, depending only on
$\alpha,\beta$, such that, for every integer $k\geq1$,
\begin{equation}\label{eq:fw-reciprocal-growth}
  c_0(k+1)^\alpha\leq\rho_k\leq c_1(k+1)^\alpha,
  \qquad w_k\leq c_2(k+1)^{\alpha-1}.
\end{equation}
To verify the first two inequalities, take logarithms in
\eqref{eq:fw-reciprocal-scaling}:
\[
  \log\rho_k
  =\sum_{j=1}^{k-1}-\log\left(1-\frac{\alpha}{j+\beta}\right)
  =\alpha\log(k+1)+O(1),
\]
where the last term is bounded uniformly in $k$.  Indeed, with
$q:=\alpha/(1+\beta)<1$, the identity
\[
  0\leq-\log(1-t)-t
  =\int_0^t\frac{s}{1-s}\,ds
  \leq\frac{t^2}{2(1-q)}
  \qquad\text{for all }0\leq t\leq q
\]
bounds the sum of the remainders by a constant multiple of
$\sum_{j\geq1}(j+\beta)^{-2}<\infty$.  Integral comparison also bounds
$\sum_{j=1}^{k-1}(j+\beta)^{-1}-\log(k+1)$ uniformly in $k$.
Exponentiating proves the bounds on $\rho_k$.  The bound on $w_k$ follows
from $w_k=\alpha\rho_{k+1}/(k+\beta)$.

It follows from \eqref{eq:fw-reciprocal-growth} that
\[
  \overline w_T\leq c_2T^{\max\{\alpha-1,0\}}.
\]
For exact oracles, $B_T=0$, so \eqref{eq:fw-reciprocal-horizon} gives
\eqref{eq:fw-reciprocal-exact}.  Under the error hypothesis of the
proposition, one also has
\[
  B_T\leq c_1D T^{\max\{\alpha-p,0\}}.
\]
Substitution in \eqref{eq:fw-reciprocal-horizon} proves
\eqref{eq:fw-reciprocal-inexact} for $T\geq2$.  Increasing the constants
to cover $k=1$ uses only $\norm{x_1-x^*}\leq d_K$.  This also explains
why no bound on the initial oracle tolerance is needed.
\end{proof}

For example, for $\gamma_k=2/(k+2)$ the factors above are
$\rho_k=k(k+1)/2$ and $w_k=k+1$ for $k\geq1$.  Thus $\overline w_T=T$ and,
for exact oracles, \eqref{eq:fw-reciprocal-horizon} gives
\[
  \norm{x_T-x^*}\leq
  \frac{2C_{\mathrm w}(d_K+T)}{T(T+1)}
  \qquad\text{for every integer }T\geq2.
\]
This illustrates how a finite-horizon weighted bound gives an
$O(T^{-1})$ norm rate even though the transformed weights grow with $T$.

\paragraph{A saddle-point illustration.}\label{ex:saddle}
Let $X\subset\R^p$ and $Y\subset\R^q$ be nonempty compact polytopes.
Consider $\min_{x\in X}\max_{y\in Y}\mathcal L(x,y)$ with
\begin{equation}\label{eq:quadratic-saddle-model}
 \mathcal L(x,y)
 =\tfrac12x^\top Qx+x^\top My-\tfrac12y^\top Ry+b^\top x-c^\top y,
\end{equation}
where $Q$ and $R$ are symmetric positive definite, $M\in\R^{p\times q}$
is arbitrary, and $b\in\R^p$, $c\in\R^q$.  The convex--concave
first-order conditions identify its saddle points with VI solutions
on $K=X\times Y$; see Rockafellar \cite[Section~27]{Rockafellar1970}.
Writing $\xi=(x,y)$, the affine operator is
\begin{equation}\label{eq:saddle-operator}
 \Phi(\xi)=(\nabla_x\mathcal L,-\nabla_y\mathcal L)
 =L\xi+\begin{pmatrix}b\\c\end{pmatrix},
 \qquad
 L=\begin{pmatrix}Q&M\\-M^\top&R\end{pmatrix}.
\end{equation}
Its symmetric part is $\operatorname{diag}(Q,R)$, so $L$ is coercive
with $\mu=\min\{\lambda_{\min}(Q),\lambda_{\min}(R)\}$, independently
of $M$.  If the saddle point $\xi^*$ belongs to
$\operatorname{relint}(X\times Y)$, Theorem~\ref{thm:app-affine-fw}
therefore gives
\[
 \norm{\xi_k-\xi^*}\leq\frac{C(1+\overline B)}{k}
 \qquad\text{for all integers }k\geq1
\]
for the harmonic vertex iteration
\eqref{eq:affine-fw}--\eqref{eq:affine-fw-oracle}, provided
$\overline B=\sup_{k\geq1}k\delta_k<\infty$.
The LMO separates into one linear minimization over each factor;
it does not solve the full quadratic best-response problems.
The entire bilinear interaction lies in the skew-symmetric part of $L$:
replacing $L$ by its symmetric part removes that interaction.
No smallness condition on $M$ is required, although the rate constant
may depend on it.

\subsection{Oblique relaxation and finite coordinate correction}
\label{subsec:oblique-relaxation}

We now consider linear inequalities and an additive correction rule,
without the averaging step used in Frank--Wolfe.  A fixed matrix maps
the normal of the selected constraint to its correction direction.
We first show that approximate most-violated selection produces bounded
iterates, even for inconsistent systems.  We then use progress in a
fixed linear functional to deduce finite termination, and apply this
argument to the coordinate rule of
Corollary~\ref{cor:coordinate-termination}.

Consider the finite system
\begin{equation}\label{eq:oblique-system}
 \ip{a_i}{x}\geq b_i\qquad\text{for all }i=1,\ldots,m,
\end{equation}
where $m,n\geq1$, $a_i\in\R^n$, and $b_i\in\R$.
Let $P\in\R^{n\times n}$ have positive-definite symmetric part.
For $B\geq0$, start at $x_0\in\R^n$ and stop if all inequalities hold.
At each index $k$ before stopping, choose $i_k$ and set
\begin{equation}\label{eq:oblique-rule}
 \ip{a_{i_k}}{x_k}-b_{i_k}
 \leq\min_{1\leq i\leq m}(\ip{a_i}{x_k}-b_i)+B,
 \qquad x_{k+1}=x_k+Pa_{i_k}.
\end{equation}
The term \emph{oblique} indicates that $Pa_i$ need not be parallel to
$a_i$.  The algorithm is a correction procedure, not a projection onto
the selected halfspace.  After stopping, keep the state constant.

Amaldi and Hauser \cite[Theorem~7.1]{AmaldiHauser2005} established
boundedness for normal-direction corrections; a change of coordinates
also covers symmetric positive-definite $P$.
The following proposition allows nonsymmetric coercive $P$, but
requires approximate most-violated selection rather than arbitrary
selection of a violated inequality.  Section~\ref{sec:related}
compares these assumptions in more detail.

\begin{proposition}[Bounded oblique relaxation]
\label{prop:oblique-relaxation}
There is a constant $C_P=C_P(a_1,\ldots,a_m,P)\geq1$ such that every
procedure \eqref{eq:oblique-rule} satisfies
\begin{equation}\label{eq:oblique-bound}
 \sup_{k\geq0}\norm{x_k}\leq M,
 \qquad
 M:=C_P(1+\norm{x_0}+B+\Delta_b),
\end{equation}
where
\[
 \Delta_b:=\max\{0,b_1,\ldots,b_m\}
            -\min\{0,b_1,\ldots,b_m\}.
\]
No feasibility assumption on \eqref{eq:oblique-system} is required.
If, in addition, there exist $c\in\R^n$ and $\gamma>0$ such that
\begin{equation}\label{eq:oblique-separator}
 \ip{c}{Pa_i}\geq\gamma\qquad\text{for all }i=1,\ldots,m,
\end{equation}
then the procedure terminates at a feasible point after a finite number
$T$ of corrections satisfying
\begin{equation}\label{eq:oblique-count}
 T\leq\frac{\norm c}{\gamma}(M+\norm{x_0}).
\end{equation}
\end{proposition}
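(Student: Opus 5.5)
We reduce the procedure to a $B$-approximate trajectory for suitable data and then apply Theorem~\ref{thm:main}, or Corollary~\ref{cor:weighted} to handle the offset. The natural choice is the state $z_k:=x_k$ with increments $u_k:=Pa_{i_k}\in U:=\{0,Pa_1,\ldots,Pa_m\}$ and operator $A:=P^{-\mathsf T}$. Then $\ip{Az}{Pa_i}=\ip{P^{-\mathsf T}z}{Pa_i}=\ip{z}{a_i}$. Since $P$ has positive-definite symmetric part, so do $P^{\mathsf T}$ and $P^{-\mathsf T}$: indeed $\ip{P^{-\mathsf T}x}{x}=\ip{y}{P^{\mathsf T}y}$ with $y=P^{-\mathsf T}x$, and this is at least $\mu\norm{y}^2\geq\mu\norm{P}^{-2}\norm x^2$. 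The element $0$ lies in $U$, so $0\in\conv U$ holds trivially. Including $0$ also lets us model the frozen state after stopping.

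The offsets $b_i$ spoil the homogeneity of the score $\ip{x}{a_i}-b_i$. To absorb them, we homogenize by adding one coordinate. Set $\hat E:=\R^{n}\times\R$, $\hat z_k:=(x_k,1)$, and $\hat U:=\{(0,0)\}\cup\{(Pa_i,0)\}$. Take $\hat A$ block triangular, with upper block $P^{-\mathsf T}$, last-row entry $\lambda>0$, and a coupling column chosen so that $\ip{\hat A(x,t)}{(Pa_i,0)}=\ip{x}{a_i}-tb_i$. Concretely, the coupling column is $-t\,w$, where $w$ solves $\ip{w}{Pa_i}=b_i$; such a $w$ may not exist when the $Pa_i$ are dependent.

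To avoid that issue, I would instead compare directly with an additive error. The approximate-minimization condition \eqref{eq:oblique-rule} over the indices $i$, combined with the requirement that the selected increment be compared against $0\in U$ as well, produces the error. Specifically, $\ip{x_k}{a_{i_k}}\leq\min_i\ip{x_k}{a_i}+B+\Delta_b$, because the $b_i$ differ by at most $\Delta_b$. Against the zero increment we need $\ip{x_k}{a_{i_k}}\leq B'$ whenever a step is taken. Before stopping some inequality is violated, so $\min_i(\ip{a_i}{x_k}-b_i)<0$, which gives $\ip{a_{i_k}}{x_k}\leq b_{i_k}+B\leq\max\{0,b_i\}+B\leq\Delta_b+B$. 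After stopping, the increment is $0$. Its score is $0$, and the minimum over $U$ is at least $\min\{0,\min_i\ip{x_k}{a_i}\}\geq\min\{0,\min_i b_i\}\geq-\Delta_b$ by feasibility, so the error is at most $\Delta_b$. Hence $(x_k)$ is a $(B+2\Delta_b)$-approximate $U$-valued trajectory for $(\R^n,U,P^{-\mathsf T})$. Theorem~\ref{thm:main} then gives \eqref{eq:oblique-bound} with $C_P:=2C(\R^n,U,P^{-\mathsf T})$. The hardest point is checking the comparison with the $0$ increment both before and after stopping, and that is what the case analysis above handles.

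For termination under \eqref{eq:oblique-separator}, each correction increases $\ip{c}{x_k}$ by at least $\gamma$. Therefore $T\gamma\leq\ip{c}{x_T-x_0}\leq\norm c(M+\norm{x_0})$, which is \eqref{eq:oblique-count}. Because of this bound, the number of corrections is finite, so the procedure must stop, and stopping occurs only at a feasible point.
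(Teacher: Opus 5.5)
Your proposal is correct and is essentially the paper's proof: you use the same reduction to $(\R^n,\{0,Pa_1,\dots,Pa_m\},P^{-\mathsf T})$, bound the offsets $b_i$ by $\Delta_b$ to get an additive error, apply Theorem~\ref{thm:main}, and prove termination by telescoping $\ip{c}{x_N-x_0}$ (as in the paper, apply this to every number $N$ of performed corrections rather than to a $T$ not yet known to be finite). The differences are cosmetic: the paper adjoins $a_0=0$, $b_0=0$ so that one chain of inequalities covers the states both before and after stopping, and your own case analysis in fact yields error $B+\Delta_b$, so the factor $2$ and the abandoned homogenization detour are unnecessary.
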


\begin{proof}
Set $a_0=0$, $b_0=0$, and
\[
 U_P:=\{Pa_i:0\leq i\leq m\},\qquad A_P:=P^{-\mathsf T}.
\]
The matrix $P$ is invertible by coercivity.  Moreover,
\[
 \frac{A_P+A_P^{\mathsf T}}2
 =P^{-\mathsf T}\frac{P+P^{\mathsf T}}2P^{-1}\succ0,
 \qquad \ip{A_Px}{Pa_i}=\ip{a_i}{x}.
\]
The set $U_P$ is finite and contains zero.  Before stopping the minimum
residual is negative, so adding index $0$, whose residual is zero,
does not change that minimum.  After stopping, select index $0$;
its residual is then a minimum, and the zero update leaves all
inequalities unchanged.  Thus, at every index $k\geq0$, the selected
index $i_k\in\{0,\ldots,m\}$ satisfies
\begin{align*}
 \ip{A_Px_k}{Pa_{i_k}}
 &=\ip{a_{i_k}}{x_k}-b_{i_k}+b_{i_k}\\
 &\leq\min_{0\leq j\leq m}(\ip{a_j}{x_k}-b_j)+B+b_{i_k}\\
 &\leq\min_{0\leq j\leq m}\ip{a_j}{x_k}+B+\Delta_b\\
 &=\min_{u\in U_P}\ip{A_Px_k}{u}+B+\Delta_b.
\end{align*}
The second inequality uses $b_{i_k}\leq\max_j b_j$ and
$-b_j\leq-\min_j b_j$.  Theorem~\ref{thm:main}, applied to
$(\R^n,U_P,A_P)$, proves \eqref{eq:oblique-bound}.

Suppose now that \eqref{eq:oblique-separator} holds.  For every integer
$N$ for which the procedure performs at least $N$ corrections,
\[
 N\gamma\leq\sum_{k=0}^{N-1}\ip{c}{Pa_{i_k}}
 =\ip{c}{x_N-x_0}\leq\norm c(M+\norm{x_0}).
\]
An infinite run would satisfy this inequality for every $N$, a
contradiction.  Hence the procedure stops, its output satisfies
\eqref{eq:oblique-system}, and taking $N=T$ proves
\eqref{eq:oblique-count}.
\end{proof}

The separator condition is sufficient, not necessary, for termination.
It implies strict feasibility: for $x=tP^{\mathsf T}c$ and sufficiently
large $t>0$, one has $\ip{a_i}{x}\geq t\gamma>b_i$ for every $i$.
No point satisfying all constraints as equalities is needed.
Feasibility alone does not ensure termination of this correction rule.
For $P=1$, the system $x\geq0$, $-x\geq-1/2$, and $x_0=3/4$, exact
most-violated selection alternates between $3/4$ and $-1/4$ forever.

\paragraph{Coordinate corrections.}
For $Gx\geq b$, write $a_i=G^{\mathsf T}e_i$ and choose
$P=G^{-\mathsf T}$.  Then
\[
 Pa_i=e_i,\qquad
 \frac{P+P^{\mathsf T}}2
 =G^{-1}\frac{G+G^{\mathsf T}}2G^{-\mathsf T}\succ0.
\]
The separator is $c=\mathbf1=(1,\ldots,1)^{\mathsf T}$ with $\gamma=1$.
This identifies \eqref{eq:coordinate-correction} as an oblique
relaxation.  The inverse is used only to explain the reduction: the
algorithm itself adds $e_{i_k}$ to $x_k$ and the corresponding column
of $G$ to the residual $Gx_k-b$.  For the sharper bound in terms of
$\norm{x_0-G^{-1}b}$, translate the equality solution to zero.

\begin{proof}[Proof of Corollary~\ref{cor:coordinate-termination}]
Coercivity makes $G$ invertible.  Put $z_k=x_k-G^{-1}b$ and
$U_c=\{0,e_1,\ldots,e_n\}$.  Before stopping,
\[
 Gz_k=Gx_k-b,\qquad
 \ip{Gz_k}{e_{i_k}}
 \leq\min_{u\in U_c}\ip{Gz_k}{u}+B,
\]
because the minimum residual is negative.  If the procedure stops,
extend it by zero updates; zero then minimizes the transformed score
at every subsequent state.  Theorem~\ref{thm:main} gives a constant
$C=C(\R^n,U_c,G)=C(n,G)\geq1$ such that
\begin{equation}\label{eq:coordinate-state-bound}
 \sup_{k\geq0}\norm{x_k-G^{-1}b}
 =\sup_{k\geq0}\norm{z_k}\leq C(1+\rho+B).
\end{equation}
After $N$ corrections,
\[
 N=\ip{\mathbf1}{z_N-z_0}
 \leq\sqrt n(\norm{z_N}+\rho)
 \leq\sqrt n\bigl[C(1+B)+(C+1)\rho\bigr].
\]
Thus an infinite run is impossible.  Taking $N=T$ proves
\eqref{eq:coordinate-count}; \eqref{eq:coordinate-state-bound} also
bounds the distance of the feasible output from $G^{-1}b$.
\end{proof}

Here $0$ is a vertex of $\conv U_c$, rather than a relative-interior
point.  The boundary case of the cycling theorem therefore gives a
finite-termination conclusion different from the averaging estimate
used in the Frank--Wolfe application.

For coordinate corrections, uniformly bounded residual errors can
instead be allowed without decay.  Suppose that, at each index before
stopping, every available residual differs from the corresponding
component of $Gx_k-b$ by at most $\varepsilon\geq0$, and the selected
index minimizes the available residuals up to an error $\eta\geq0$.
The same score comparison from Appendix~\ref{app:oracle-errors} yields
the selection condition of Corollary~\ref{cor:coordinate-termination}
with $B=\eta+2\varepsilon$.  Hence finite termination is preserved.
The stopping test must still use the true inequalities $Gx_k\geq b$;
no decay of $\varepsilon$ or $\eta$ is required.

\paragraph{Bounded positive step lengths.}
Keep the selection and stopping rules in
Corollary~\ref{cor:coordinate-termination}, but replace the update by
$x_{k+1}=x_k+w_ke_{i_k}$, where
$0<w_{\min}\leq w_k\leq\overline w$ at every index before stopping.
Corollary~\ref{cor:weighted} gives
\[
 \sup_{k\geq0}\norm{z_k}
 \leq C_{\mathrm w}(\overline w+\rho+B).
\]
After $N$ corrections, $Nw_{\min}\leq\sum_{k<N}w_k
=\ip{\mathbf1}{z_N-z_0}$.  The same argument proves finite termination
and the bound
\[
 T\leq\frac{\sqrt n}{w_{\min}}
       \bigl[C_{\mathrm w}(\overline w+\rho+B)+\rho\bigr].
\]
More generally, finite termination follows whenever the cumulative
weight of an infinite run would diverge, because boundedness of $z_N$
bounds $\sum_{k<N}w_k$.  Without such a condition, boundedness alone
does not force termination.  For instance, $G=1$, $b=1$, $x_0=0$,
and $w_k=2^{-k-2}$ give an infinite exact run with $x_k<1/2$.

\paragraph{The role of nonsymmetry and coercivity.}
For
\[
 G_\kappa=\begin{pmatrix}1&\kappa\\-\kappa&1\end{pmatrix},
 \qquad \kappa>1,
\]
the symmetric part is the identity, and
Corollary~\ref{cor:coordinate-termination} applies for every $\kappa$.
The comparison matrix with diagonal entries $1$ and off-diagonal
entries $-\kappa$ has a negative eigenvalue, so $G_\kappa$ is not an
$H$-matrix of the type used by Frommer and Szyld
\cite[Section~4]{FrommerSzyld2023}.  The corollary does not require
diagonal dominance.  If coercivity is dropped, even a feasible system
can yield an unbounded run: for
$G=\left(\begin{smallmatrix}0&-1\\1&0\end{smallmatrix}\right)$,
$b=0$, and $x_0=(0,1)$, the exact rule gives $x_k=(k,1)$ for every
$k\geq0$, since the first residual is always $-1$ and the second is $k$.

\section{Limitations, Generalizations and Open Problems}\label{sec:limitations}

We first record two generalizations that follow directly from
Theorem~\ref{thm:main}, together with observations on the necessity of its
assumptions and the dependence of its bound on the data.  We then discuss
the limitations of the applications and questions that remain open.

\paragraph{Sublinear perturbations of the score.}
In \eqref{eq:trajectory}, replace $Az_k$ by $F(z_k)=Az_k+r(z_k)$,
where $r\colon E\to E$ is locally bounded and satisfies
\[
  \lim_{\norm z\to\infty}\frac{\norm{r(z)}}{\norm z}=0.
\]
Keep the additive update rule and the assumptions on $E$, $U$, and $A$.
Then there is a constant $C_F=C_F(E,U,A,r)$ such that, for every
$B\geq0$, every trajectory with this modified selection rule satisfies
\[
  \sup_{k\geq0}\norm{z_k}\leq C_F(1+\norm{z_0}+B).
\]
To prove this, let $C$ be the constant in Theorem~\ref{thm:main}.
If $R_U=0$, the trajectory is constant.  Otherwise, choose
$\varepsilon>0$ such that $2CR_U\varepsilon\leq1/2$.
Local boundedness and the sublinear growth assumption give a finite
$M_\varepsilon\geq0$ with
$\norm{r(z)}\leq M_\varepsilon+\varepsilon\norm z$ for all $z\in E$.
Comparison of the perturbed and unperturbed scores shows that $u_k$ is
an approximate minimizer for $Az_k$ with error at most
$B+2R_U(M_\varepsilon+\varepsilon\norm{z_k})$.
For an integer $N\geq1$, put
$M_N=\max_{0\leq k\leq N}\norm{z_k}$.
The finite prefix is thus approximate with the constant error
$B+2R_U(M_\varepsilon+\varepsilon M_N)$; it can be extended to an
infinite trajectory with this error by choosing exact minimizers thereafter.
Theorem~\ref{thm:main} yields
\[
  M_N\leq C(1+\norm{z_0}+B+2R_U M_\varepsilon)
       +2CR_U\varepsilon M_N.
\]
Absorbing the last term and letting $N\to\infty$ gives the claimed
bound with $C_F=2C(1+2R_U M_\varepsilon)$.
This consequence concerns sublinear perturbations of the additive score
at infinity; it does not establish the nonlinear Frank--Wolfe extension
discussed below.

\paragraph{Finite update sets in Hilbert spaces.}
The same form of bound holds when $E$ is replaced by a real Hilbert space
$\mathcal H$, provided $U$ remains finite and $A\colon\mathcal H\to\mathcal H$
is bounded, linear, and coercive.  Indeed, let $L=\operatorname{span}U$
and let $P_L$ denote the orthogonal projection onto $L$.
Write $z_k=p_k+q$, where $p_k=P_Lz_k$ and
$q=(\Id_{\mathcal H}-P_L)z_0$ is constant.
On the finite-dimensional space $L$, the operator
$A_L=P_LA|_L$ is coercive, and the oracle score is
\[
  \ip{Az_k}{u}=\ip{A_Lp_k+P_LAq}{u}
  \qquad\text{for all }u\in U.
\]
Thus $(p_k)$ is an approximate trajectory for $(L,U,A_L)$ with error
at most $B+2R_U\norm A\norm q$.
Applying Theorem~\ref{thm:main} and using
$\norm{p_0},\norm q\leq\norm{z_0}$ gives a bound of the same form,
with a constant independent of the initial state, $B$, and the admissible
selections.  If $L=\{0\}$, the conclusion is immediate.
Gelfand et al.\ \cite{GelfandEtAl2010} already note the Hilbert-space
extension for the classical cycling theorem.  The observation here
extends the robust nonsymmetric statement and applies, in particular,
to finite candidate sets in a reproducing-kernel Hilbert space.

\paragraph{Necessary assumptions and tightness of the bound.}
The condition $0\in\conv U$ is necessary for boundedness under any
selection rule.  If it fails, strict separation, as in Rockafellar
\cite[Section~11]{Rockafellar1970}, gives $c\in E$ and $\gamma>0$ such that
$\ip{c}{u}\geq\gamma$ for all $u\in U$.  Every $U$-valued trajectory
then satisfies
\[
  \ip{c}{z_k}\geq\ip{c}{z_0}+k\gamma
  \qquad\text{for all }k\in\mathbb N_0,
\]
and is unbounded, independently of the oracle.
The linear dependence on $B$ is also necessary.  For $E=\R$, $A=1$,
$U=\{-1,0,1\}$, and $B\geq2$, the trajectory
$z_k=\min\{k,\lfloor B/2\rfloor\}$ starts at zero and is
$B$-approximate: the update $+1$ has score error $2z_k$ while it is
used, and the subsequent update $0$ has score error $z_k\leq B$.
Its supremum is $\lfloor B/2\rfloor$, so no bound that grows sublinearly
in $B$ can hold uniformly over admissible trajectories.
Likewise, the supremum is at least $\norm{z_0}$ because it includes
the initial state.

\paragraph{Explicit constants.}
Theorem~\ref{thm:main} establishes the existence of $C(E,U,A)$.
Its compactness argument and finite covering of the degenerate directions
are qualitative.  One route to an effective estimate would be to quantify
the geometry of the normal fan of $\conv U$, including positive score
gaps and angular separations between faces.
The explicit bound in Theorem~\ref{thm:explicit-symmetric} instead uses
symmetry and a relative inradius.  An explicit nonsymmetric counterpart
remains to be derived.  In particular, for fixed $E$, $U$, and symmetric
part $S=(A+A^\top)/2\succ0$, it is natural to ask how a bound depends on
\[
 \kappa:=\frac{\norm{(A-A^\top)/2}}{\lambda_{\min}(S)}.
\]
The results impose no upper bound on this ratio, but do not give a
constant uniform as the ratio increases.

\paragraph{Finite output sets and oracle accuracy.}
Theorem~\ref{thm:main} requires a fixed finite update set, not necessarily
a set of vertices.  Corollary~\ref{cor:weighted} additionally permits
bounded nonnegative multiples of these updates, but its oracle still
selects a direction from the fixed finite set.
The vertex-returning Frank--Wolfe oracle and the finite set of
coordinate corrections meet this requirement.
An oracle that returns arbitrary points of a minimizing face can instead
produce infinitely many increments.  For an approximate oracle,
optimality of a convex combination controls only the weighted average
of its vertexwise errors; tiny positive weights can remain on badly
suboptimal vertices.  Such approximate oracles are not covered by the
finite-output theorem, and a robust extension would need additional
hypotheses.

A fixed additive error $B$ is allowed in the additive recursion.
For the harmonic Frank--Wolfe recursion, the original score error
$\delta_k$ becomes $k\delta_k$; the condition
$\sup_{k\geq1}k\delta_k<\infty$ gives the $O(k^{-1})$ norm rate.
Proposition~\ref{prop:affine-fw-schedules} additionally covers steps
$\alpha/(k+\beta)$ and errors $\delta_k=O(k^{-p})$ for $p>0$, with
norm rate $O(k^{-\min\{\alpha,1,p\}})$.  A fixed positive tolerance
in the original Frank--Wolfe score does not imply convergence to the exact
VI solution.  In the corresponding additive variables, the transformed
error grows with the state-normalization factor, rather than remaining
a fixed $B$.

\paragraph{Scope and complexity of coordinate correction.}
Corollary~\ref{cor:coordinate-termination} concerns square matrices
with positive-definite symmetric part.  Such a matrix is invertible,
so $Gx\geq b$ is always feasible.  The corollary is therefore a
termination guarantee for a specific update rule, not a method for
deciding feasibility of arbitrary linear systems.  The output need
not solve $Gx=b$, nor satisfy a complementarity condition.
Proposition~\ref{prop:oblique-relaxation} permits inconsistent systems,
but then asserts boundedness only, not finite detection of inconsistency.

The correction-count bound depends on the nonconstructive constant of
Theorem~\ref{thm:main}.  The explicit relative-inradius estimate in
Theorem~\ref{thm:explicit-symmetric} does not apply to
$U_c=\{0,e_1,\ldots,e_n\}$, even for symmetric $G$, because zero is
not in the relative interior of its convex hull.  For symmetric $G$,
Amaldi and Hauser's general bound \cite{AmaldiHauser2005} remains
available.  The coordinate update and the column update of the residual
require no inverse or projection, but computing a most-violated index
also has a cost.  Deriving effective nonsymmetric correction bounds
and comparing total runtime with established relaxation methods remain
open tasks; no computational-complexity improvement is asserted here.
Bounded selection errors preserve exact feasibility only with the
stated exact stopping test.

\Needspace{17\baselineskip}
\begin{samepage}
\section*{AI usage}
Following the
\emph{Leiden Declaration on Artificial Intelligence and Mathematics}
by Alper et al.\ \cite{AlperEtAl2026}, the author discloses discussions
with OpenAI's ChatGPT and Anthropic's Claude and the use of OpenAI's Codex.
The research originated in the question of last-iterate convergence of
Frank--Wolfe iterations for affine variational inequalities on polytopes.
ChatGPT -- 5.6 Sol suggested a connection with perceptron cycling theorems, whose
existing forms did not resolve the original question.  It then proposed
an initial proof sketch of a more general perceptron cycling theorem based on
induction on dimension and face reduction.  The author subsequently
corrected errors in the argumentation and developed this sketch further including 
the new Lyapunov approach for trajectory segments.  ChatGPT and Codex also assisted
with the further development and checking of mathematical arguments,
the write-up and literature searches.  Claude -- Faible 5.1 assisted with
proofreading, stylistic suggestions, and checking selected mathematical
claims.  The author takes sole responsibility for the proofs, the
final text, attribution, and the accuracy and completeness of references.
\end{samepage}

\appendix
\section{Basic Theorems in Calculus}

For completeness, this appendix records the precise finite-dimensional
analytic results invoked above.  All intervals below are compact, and all
Euclidean spaces are finite-dimensional.

\begin{theorem}[Arzel\`a--Ascoli, finite-dimensional form;
{Rudin \cite[Theorem~7.25]{RudinPMA}}]
\label{thm:appendix-aa}
Let $I=[a,b]$, let $E$ be a finite-dimensional normed space, and let
$(f_n)$ be a sequence of functions from $I$ to $E$.  Assume that $(f_n)$ is
uniformly bounded and equicontinuous; that is,
\[
  \sup_n\sup_{t\in I}\norm{f_n(t)}<\infty
\]
and, for every $\varepsilon>0$, there is a $\delta>0$ such that, for all
$n$ and all $s,t\in I$,
\[
  |s-t|<\delta
  \quad\Longrightarrow\quad
  \norm{f_n(s)-f_n(t)}<\varepsilon.
\]
Then $(f_n)$ has a subsequence that converges uniformly on $I$ to a
continuous function $f\colon I\to E$.  In particular, a uniformly bounded
family with a common Lipschitz constant satisfies the hypotheses.
\end{theorem}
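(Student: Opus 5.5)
This is the classical Arzelà--Ascoli theorem in the finite-dimensional form. I would prove it by a diagonal argument over a countable dense set, followed by an $\varepsilon/3$ argument that turns pointwise convergence on that set into uniform convergence on $I$.

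\emph{Pointwise convergence on a dense set.} Let $D=\{t_1,t_2,\ldots\}$ be the rational points of $I$ together with $a$ and $b$. Uniform boundedness gives, for each $m$, a bounded sequence $(f_n(t_m))_n$ in $E$. Since $E$ is finite-dimensional, closed bounded sets are compact (Heine--Borel), so every bounded sequence has a convergent subsequence. Extract a subsequence converging at $t_1$, then a further subsequence converging at $t_2$, and so on. The diagonal subsequence $g_k$ then converges at every point of $D$.

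\emph{Uniform Cauchy property.} Fix $\varepsilon>0$ and take $\delta$ from equicontinuity. By compactness of $I$, finitely many points $s_1,\ldots,s_p\in D$ have the property that every $t\in I$ lies within $\delta$ of some $s_i$. Choose $N$ so that $\norm{g_k(s_i)-g_l(s_i)}<\varepsilon$ for all $i$ and all $k,l\geq N$. For any $t\in I$, pick $s_i$ with $|t-s_i|<\delta$; the triangle inequality then gives
\[
\norm{g_k(t)-g_l(t)}\leq\norm{g_k(t)-g_k(s_i)}+\norm{g_k(s_i)-g_l(s_i)}+\norm{g_l(s_i)-g_l(t)}<3\varepsilon.
\]
Thus $(g_k)$ is uniformly Cauchy. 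Completeness of $E$ yields a uniform limit $f$.

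\emph{Continuity of the limit.} $f$ is continuous as a uniform limit of continuous functions; continuity of each $g_k$ follows from equicontinuity. Alternatively, pass to the limit in the equicontinuity estimate, which gives $\norm{f(s)-f(t)}\leq\varepsilon$ whenever $|s-t|<\delta$.

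\emph{The Lipschitz case.} If every $f_n$ is $L$-Lipschitz, take $\delta=\varepsilon/L$ (any $\delta$ works when $L=0$); this gives equicontinuity, so the main statement applies.

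The only mildly delicate step is the diagonal extraction together with the finite $\delta$-net, and both are standard. In the paper one can simply cite Rudin's Theorem~7.25.
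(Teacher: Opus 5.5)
Your proof is correct and follows the same route as the paper's remark: a diagonal extraction on a countable dense subset of $I$ using finite-dimensional Heine--Borel, then equicontinuity via a finite $\delta$-net to make the subsequence uniformly Cauchy. Beyond the paper's sketch, you also write out the $\varepsilon/3$ estimate, completeness of $E$, continuity of the limit, and the Lipschitz case.
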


\par\medskip
\begin{remark}[Why the finite-dimensional hypothesis suffices]
Choose a countable dense subset of $I$.  Bounded subsets of $E$ have compact
closure by the finite-dimensional Heine--Borel theorem in Rudin
\cite[Theorem~2.41]{RudinPMA}, so a diagonal argument produces a subsequence
converging at every point of that dense subset.  Equicontinuity makes this
subsequence uniformly Cauchy on $I$, hence uniformly convergent.  This is
the standard proof of the version used in Lemma~\ref{lem:limit}.
\end{remark}

\begin{theorem}[Lipschitz paths, Rademacher, and integration]
\label{thm:appendix-lipschitz}
If $f\colon[a,b]\to E$ is Lipschitz, then $f$ is absolutely continuous and
differentiable for almost every $t\in[a,b]$.  Its derivative is essentially
bounded by every Lipschitz constant of $f$, and, for all $s,t\in[a,b]$
with $s\leq t$,
\[
  f(t)-f(s)=\int_s^t f'(r)\,dr.
\]
Consequently, if $h\colon[a,b]\to\R$ is Lipschitz and
$h'(t)\leq-c$ for almost every $t$, then, for all $s,t\in[a,b]$
with $s\leq t$,
\[
  h(t)-h(s)\leq-c(t-s).
\]
The almost-everywhere differentiability assertion is the one-dimensional
case of Rademacher's theorem; see Evans and Gariepy
\cite[Section~3.1]{EvansGariepy}.  The absolute-continuity and integral
representation statements are the fundamental theorem of calculus for
absolutely continuous functions; see Rudin
\cite[Theorem~7.20]{RudinRCA}.  The vector-valued form follows by applying
the scalar theorem to finitely many coordinates.
\end{theorem}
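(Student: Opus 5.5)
The statement just above is Theorem~\ref{thm:appendix-lipschitz}. It is a compilation of standard facts, so I would prove it by reduction to the scalar case plus one elementary integration step.

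\emph{Scalar and vector facts.} Fix a basis of $E$. Each coordinate function $f_i$ is then a real Lipschitz function on $[a,b]$, with constant at most a fixed multiple of $\mathrm{Lip}(f)$. A real Lipschitz function is absolutely continuous directly from the definition: take $\delta=\varepsilon/\mathrm{Lip}(f_i)$ in the $\varepsilon$--$\delta$ condition for finite disjoint interval families. Hence each $f_i$ is differentiable almost everywhere. This is the one-dimensional Rademacher theorem, or equivalently Lebesgue's differentiation theorem for functions of bounded variation. The fundamental theorem of calculus for absolutely continuous functions then gives $f_i(t)-f_i(s)=\int_s^t f_i'(r)\,dr$. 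The union of the finitely many exceptional null sets is null, so $f$ is differentiable almost everywhere, and assembling coordinates gives the vector integral identity.

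\emph{Derivative bound.} At any point of differentiability, $\norm{f'(t)}$ is the limit of $\norm{f(t+h)-f(t)}/|h|\leq\mathrm{Lip}(f)$. So the bound holds wherever $f'$ exists, and in particular almost everywhere. Measurability of $f'$ is not an issue, because it is a pointwise limit of the measurable difference quotients along $h=1/n$.

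\emph{Consequence for $h$.} Apply the scalar integral formula to $h$:
\[
h(t)-h(s)=\int_s^t h'(r)\,dr\leq\int_s^t(-c)\,dr=-c(t-s).
\]
This uses only $h'\leq -c$ almost everywhere and the monotonicity of the Lebesgue integral.

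The only step that is not routine is the almost-everywhere differentiability. I would not reprove it; I would cite the sources named in the statement. If a self-contained route were wanted, I would write the Lipschitz coordinate as the difference of the two monotone functions $L t$ and $L t-f_i(t)$, and then invoke Lebesgue's theorem on monotone functions.
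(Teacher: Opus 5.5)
Your proposal is correct and follows the same route the paper takes. You reduce to finitely many coordinates, obtain almost-everywhere differentiability from the one-dimensional Rademacher (equivalently Lebesgue) theorem, get the integral identity from the fundamental theorem of calculus for absolutely continuous functions, and deduce the estimate for $h$ from monotonicity of the integral. The paper only cites these facts, so your explicit checks fill in details it leaves to the references: Lipschitz implies absolutely continuous, the pointwise derivative bound, and the monotone decomposition into $Lt$ and $Lt-f_i(t)$.
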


\begin{theorem}[Lebesgue density theorem;
{Folland \cite[Theorem~3.21, applied to indicator functions]{Folland}}]
\label{thm:appendix-density}
If $M\subset\R$ is Lebesgue measurable, then almost every $t\in M$ is a
density point of $M$, meaning
\[
  \lim_{r\downarrow0}
  \frac{|M\cap(t-r,t+r)|}{2r}=1.
\]
In particular, if an interval is partitioned into finitely many measurable
sets $M_1,\dots,M_N$, then for almost every point $t$ of the interval, $t$
is a density point of the unique member of the partition that contains it.
\end{theorem}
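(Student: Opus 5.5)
The statement just given is the Lebesgue density theorem, Theorem~\ref{thm:appendix-density}, in the form used in Lemma~\ref{lem:envelope}. I would derive it from the Lebesgue differentiation theorem applied to indicator functions, as the citation to Folland indicates, and then obtain the partition clause by a finite union of null sets.

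\emph{First step: the density assertion.} I would first reduce to the case that $M$ is bounded. Write $M=\bigcup_{n}M\cap(-n,n)$. For $t\in(-n,n)$ and small $r$, the sets $M$ and $M\cap(-n,n)$ agree on $(t-r,t+r)$. Hence it suffices to treat $M$ of finite measure, and then $f=\mathbf 1_M\in L^1(\R)$. The Lebesgue differentiation theorem (Folland, Theorem~3.21) states that, for almost every $t$,
\[
  \lim_{r\downarrow0}\frac1{2r}\int_{t-r}^{t+r}f(s)\,ds=f(t).
\]
At points $t\in M$ the right-hand side equals $1$ and the left-hand side equals $|M\cap(t-r,t+r)|/(2r)$. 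So the exceptional points form a null set.

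If I had to prove the differentiation step myself, I would use the standard route. Continuous compactly supported functions are dense in $L^1$, and the averages of such functions converge everywhere. The Vitali covering lemma gives the weak-type maximal inequality $|\{Mf>\lambda\}|\leq 3\norm f_1/\lambda$. Combining the two, the limsup of the oscillation vanishes almost everywhere.

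\emph{Second step: the partition clause.} Let $I$ be partitioned into measurable sets $M_1,\dots,M_N$. For each $i$, let $Z_i\subseteq M_i$ be the null set of points of $M_i$ that are not density points of $M_i$. Every $t\in I\setminus\bigcup_i Z_i$ lies in exactly one $M_i$ and is a density point of it. Since $\bigcup_i Z_i$ is a finite union of null sets, it is null.

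\emph{Main obstacle.} The only substantive ingredient is the Vitali covering and maximal inequality. Since the paper cites the theorem, I would invoke it rather than reprove it. The rest is bookkeeping: localizing to a finite-measure set, and noting that symmetric intervals $(t-r,t+r)$ match the averages in Folland's formulation.
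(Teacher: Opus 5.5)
Your proposal is correct and takes the same approach as the paper: the paper gives no separate proof, only a citation of Folland's Lebesgue differentiation theorem applied to the indicator function of $M$, which is what you do. Your localization to $M\cap(-n,n)$ is harmless but not needed, since that theorem is stated for locally integrable functions, and your finite union of null sets supplies the partition clause that the paper leaves implicit.
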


\begin{lemma}[Derivative of a finite maximum]
\label{lem:appendix-finite-max}
Let $f_1,\dots,f_N$ be real-valued functions defined near $t_0$.  Suppose
that every $f_i$ is differentiable at $t_0$ and that, for some $a,c\in\R$,
\[
  f_i(t_0)=a,
  \qquad
  f_i'(t_0)=c
  \qquad\text{for all }i=1,\dots,N.
\]
Then $f:=\max_{1\leq i\leq N}f_i$ is differentiable at $t_0$ and
$f'(t_0)=c$.
\end{lemma}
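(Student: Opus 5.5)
The plan is to compare the finitely many expansions directly. Set $\Delta_i(h):=f_i(t_0+h)-a-ch$ for small $h$. Differentiability of each $f_i$ at $t_0$, with the common value $f_i(t_0)=a$ and the common derivative $c$, gives $\lim_{h\to0}\Delta_i(h)/h=0$ for every $i$. I will then write
\[
  f(t_0+h)-a-ch=\max_{1\leq i\leq N}\bigl(f_i(t_0+h)-a-ch\bigr)=\max_{1\leq i\leq N}\Delta_i(h),
\]
which is legitimate because subtracting the same number $a+ch$ from every term commutes with the maximum.

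The next step is the elementary estimate $\bigl|\max_i x_i\bigr|\leq\max_i|x_i|$ for finitely many reals. Applying it with $x_i=\Delta_i(h)$ yields
\[
  \frac{|f(t_0+h)-a-ch|}{|h|}\leq\max_{1\leq i\leq N}\frac{|\Delta_i(h)|}{|h|}.
\]
The right-hand side tends to zero as $h\to0$, since a maximum of finitely many null functions is null. We also have $f(t_0)=\max_i f_i(t_0)=a$. Together these show that $f(t_0+h)=f(t_0)+ch+o(|h|)$, which says precisely that $f$ is differentiable at $t_0$ with $f'(t_0)=c$.

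There is no real obstacle here. The only points needing care are that $f$ must be defined on a common neighborhood, so one takes the intersection of the finitely many neighborhoods on which the $f_i$ are defined, and that finiteness of the index set is used when the maximum of the $o(|h|)$ remainders is again $o(|h|)$. This is exactly the step invoked in the proof of Lemma~\ref{lem:envelope}.
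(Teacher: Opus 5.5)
Your proof is correct and follows the paper's argument: both write $f_i(t_0+h)=a+ch+r_i(h)$, use finiteness of the index set to get $\max_i|r_i(h)|/|h|\to0$, and take the maximum to obtain $f(t_0+h)=a+ch+o(|h|)$. You additionally state the step the paper leaves implicit, namely $\bigl|\max_i\Delta_i(h)\bigr|\leq\max_i|\Delta_i(h)|$, which is exactly what justifies passing the $o(|h|)$ bound through the maximum.
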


\begin{proof}
For each $i$,
\[
  f_i(t_0+h)=a+ch+r_i(h),
  \qquad \lim_{h\to0}\frac{r_i(h)}h=0.
\]
Since there are only finitely many indices,
$\lim_{h\to0}\max_i|r_i(h)|/|h|=0$.  Taking the maximum over $i$ therefore gives
\[
  f(t_0+h)=a+ch+o(|h|),
\]
which is the asserted differentiability.
\end{proof}

\clearpage
\section{Comparison of explicit bounds}\label{app:bound-comparison}

The relative-inradius estimate can remain uniform when a direct
application of the relaxation bound deteriorates.  For $0<\varepsilon<1/2$,
consider
\[
 E=\R,\qquad A=1,\qquad
 U_\varepsilon=\{-1,1,1-\varepsilon\},\qquad B=1,\qquad z_0=0.
\]
Here $\conv U_\varepsilon=[-1,1]$, so $R=r=1$ and
Theorem~\ref{thm:explicit-symmetric} gives
\[
 \sup_{k\geq0}|z_k|\leq1+\frac{2+1}{2}=\frac52,
\]
independently of $\varepsilon$.

Every admissible increment satisfies $z_ku_k\leq1$, since the minimum
score is nonpositive.  Apply Amaldi and Hauser's Theorem~7.1
\cite{AmaldiHauser2005} with constraint matrix
$D_\varepsilon=(-1,1,1-\varepsilon)$, right-hand side
$b=(1,1,1)^{\mathsf T}$, initial state zero, and all step coefficients
equal to one.  The homogenized matrix in their formula is
\[
 \widetilde D_\varepsilon
 =\begin{pmatrix}
    -1&1&1-\varepsilon&0\\
    -1&-1&-1&1
   \end{pmatrix}.
\]
Their condition number $\kappa(\widetilde D_\varepsilon)$ is the
reciprocal of the smallest positive volume generated by a linearly
independent subset of its columns \cite[Eq.~(3.1)]{AmaldiHauser2005}.
The smallest column norm is $1$, and the positive absolute determinants
of pairs of columns are $\varepsilon$, $1-\varepsilon$, $1$,
$2-\varepsilon$, and $2$.  Consequently,
\[
 \kappa(\widetilde D_\varepsilon)=\varepsilon^{-1},\qquad
 \operatorname{rank}(\widetilde D_\varepsilon)=2,\qquad
 \max_j\norm{\widetilde d_j}=\sqrt2,
\]
where $\widetilde d_j$ denotes the $j$th column.
Since the original ambient dimension is one, their estimate
\cite[Eq.~(7.1)]{AmaldiHauser2005} becomes
\[
 M_\varepsilon
 =2\max\left\{1,\,1+\sqrt2\left(\frac8\varepsilon+1\right)\right\}
 =2+2\sqrt2\left(\frac8\varepsilon+1\right).
\]
Thus $\lim_{\varepsilon\downarrow0}M_\varepsilon=\infty$, whereas
the relative-inradius bound stays equal to $5/2$.  The added update
$1-\varepsilon$ leaves the convex hull unchanged but makes two
homogenized columns nearly identical.  This comparison concerns the
displayed general formula under the specified application, not the best
bound obtainable by further specializing their theorem, and does not
assert that our estimate is smaller for every instance.

\section{Oracle error models}\label{app:oracle-errors}

This appendix makes explicit how imperfect information in the additive
recursion leads to the tolerance $B$ in Theorem~\ref{thm:main}.
We use the standing assumptions on $E$, $U$, and $A$ from that theorem.
The proposition below concerns the additive states $z_k$; the original
Frank--Wolfe score errors have the different scaling explained in
Subsection~\ref{subsec:affine-fw-app}.  The proof combines the standard
score comparisons of Freund and Grigas
\cite[Section~5.2, Proposition~5.1]{FreundGrigas2016} with the distance
traveled during a bounded delay.  Bounded-delay models are discussed by
Bertsekas and Tsitsiklis \cite{BertsekasTsitsiklis1989}, and quantization
models by Gray and Neuhoff \cite{GrayNeuhoff1998}.

\begin{proposition}[Combined oracle imperfections]
\label{prop:robustness-mechanisms}
Let $(z_k)$ be a $U$-valued trajectory, let
$R_U=\max_{u\in U}\norm u$.  Fix
$\tau\in\mathbb N_0$ and $\varepsilon,\eta,\nu\geq0$.  Suppose that, for
every $k\in\mathbb N_0$, there are an integer
$0\leq\tau_k\leq\min\{\tau,k\}$ and a perturbation $n_k\in E$ such that
\[
  \widetilde z_k:=z_{k-\tau_k}+n_k,
  \qquad \norm{n_k}\leq\nu.
\]
Assume also that, for every $k\in\mathbb N_0$, the available scores satisfy
\[
  \max_{u\in U}
  \left|
    \widehat\ell_k(u)-\ip{A\widetilde z_k}{u}
  \right|
  \leq\varepsilon
\]
and that the selected increment satisfies
\[
  \widehat\ell_k(u_k)
  \leq\min_{u\in U}\widehat\ell_k(u)+\eta.
\]
Then $(z_k)$ is a $B$-approximate trajectory with
\begin{equation}\label{eq:combined-oracle-error}
  B=\eta+2\varepsilon
    +2\norm A R_U(\tau R_U+\nu).
\end{equation}
In particular, the following cases are obtained by setting the unused error
parameters equal to zero:
\begin{enumerate}
\item Inaccurate or quantized scores at the current state give
$B=\eta+2\varepsilon$.
\item An $\eta$-optimal update computed at a state delayed by at most
$\tau$ iterations gives
\[
  B=\eta+2\tau\norm A R_U^2.
\]
\item An $\eta$-optimal update computed at a perturbed or quantized current
state with error at most $\nu$ gives
\[
  B=\eta+2\norm A R_U\nu.
\]
\end{enumerate}
Consequently, Theorem~\ref{thm:main} and the unit-weight case of
Corollary~\ref{cor:averaging} give a state bound and an $O(T^{-1})$
average-update bound, with a numerator depending at most linearly on the
displayed error parameters.
\end{proposition}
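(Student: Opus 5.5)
The plan is to show that the true scores and the available scores differ uniformly by a controlled amount, and then to run the standard two-sided comparison used by Freund and Grigas. Fix $k$ and write $\ell_k(u):=\ip{Az_k}{u}$ for the true score. For every $u\in U$ the triangle inequality gives
\[
  |\widehat\ell_k(u)-\ell_k(u)|
  \leq|\widehat\ell_k(u)-\ip{A\widetilde z_k}{u}|
  +|\ip{A(\widetilde z_k-z_k)}{u}|
  \leq\varepsilon+\norm A R_U\norm{\widetilde z_k-z_k}.
\]
The first term on the right is at most $\varepsilon$ by hypothesis. For the second, I decompose $\widetilde z_k-z_k=(z_{k-\tau_k}-z_k)+n_k$. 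Since $z_k-z_{k-\tau_k}=\sum_{i=k-\tau_k}^{k-1}u_i$ is a sum of $\tau_k\leq\tau$ increments of norm at most $R_U$, we get $\norm{\widetilde z_k-z_k}\leq\tau R_U+\nu$. Setting $\sigma:=\varepsilon+\norm A R_U(\tau R_U+\nu)$, we therefore have $|\widehat\ell_k(u)-\ell_k(u)|\leq\sigma$ for every $u\in U$.

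Next comes the comparison. Let $u^*\in U$ minimize $\ell_k$; it exists because $U$ is finite. Then
\[
  \ell_k(u_k)\leq\widehat\ell_k(u_k)+\sigma
  \leq\min_{u\in U}\widehat\ell_k(u)+\eta+\sigma
  \leq\widehat\ell_k(u^*)+\eta+\sigma
  \leq\ell_k(u^*)+\eta+2\sigma.
\]
Since $\eta+2\sigma$ equals \eqref{eq:combined-oracle-error}, this is exactly \eqref{eq:trajectory} with that $B$. The three listed special cases follow by substitution:
\begin{itemize}
\item $\tau=\nu=0$ gives $B=\eta+2\varepsilon$;
\item $\varepsilon=\nu=0$ gives $B=\eta+2\tau\norm A R_U^2$;
\item $\varepsilon=\tau=0$ gives $B=\eta+2\norm A R_U\nu$.
\end{itemize}
For the second case, note that an $\eta$-optimal selection at the delayed state means $\widehat\ell_k=\ip{A\widetilde z_k}{\cdot}$ exactly, so $\varepsilon=0$ is the right choice there.

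For the final sentence, I apply Theorem~\ref{thm:main} to obtain $\sup_k\norm{z_k}\leq C(1+\norm{z_0}+B)$, and the unit-weight part of Corollary~\ref{cor:averaging} to bound the average update by $(C(1+\norm{z_0}+B)+\norm{z_0})/T$. Since $B$ is affine in $(\eta,\varepsilon,\tau,\nu)$, the numerator depends at most linearly on these error parameters.

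The only step needing real care is the delay estimate. The index $k-\tau_k$ is nonnegative because $\tau_k\leq k$, so the telescoping sum over $\tau_k$ increments is well defined, and $\tau_k\leq\tau$ caps its length. Beyond that, the argument is routine.
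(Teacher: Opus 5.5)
Your proof is correct and follows the paper's argument: the paper also combines the Freund--Grigas two-sided score comparison with the delay estimate $\norm{z_k-z_{k-\tau_k}}\leq\tau R_U$. The only difference is the order of steps. The paper first compares at $\widetilde z_k$ and then shifts to $z_k$ using $\norm{u_k-u}\leq 2R_U$, whereas you fold the state error into a uniform score error $\sigma$ before comparing; both give exactly $B=\eta+2\varepsilon+2\norm A R_U(\tau R_U+\nu)$.
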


The factor two in the proof appears because the score error must be
controlled once for the selected update and once for a true minimizer.

\begin{proof}[Proof of Proposition~\ref{prop:robustness-mechanisms}]
Fix $k\in\mathbb N_0$ and $u\in U$.  The score accuracy and approximate
minimization assumptions give
\begin{align*}
 \ip{A\widetilde z_k}{u_k}
 &\leq\widehat\ell_k(u_k)+\varepsilon\\
 &\leq\widehat\ell_k(u)+\eta+\varepsilon\\
 &\leq\ip{A\widetilde z_k}{u}+\eta+2\varepsilon.
\end{align*}
Because the intervening increments belong to $U$,
\[
 \norm{z_k-z_{k-\tau_k}}
 \leq\tau_kR_U\leq\tau R_U.
\]
Consequently,
\[
 \norm{z_k-\widetilde z_k}
 \leq\tau R_U+\nu,
 \qquad \norm{u_k-u}\leq2R_U.
\]
It follows that
\begin{align*}
 \ip{Az_k}{u_k-u}
 &=\ip{A\widetilde z_k}{u_k-u}
   +\ip{A(z_k-\widetilde z_k)}{u_k-u}\\
 &\leq\eta+2\varepsilon
   +2\norm A R_U(\tau R_U+\nu).
\end{align*}
Taking the minimum over $u\in U$ proves
\eqref{eq:combined-oracle-error}.  The three displayed special cases follow
by setting the unused parameters equal to zero.  The final bounds follow
from Theorem~\ref{thm:main} and the unit-weight case of
Corollary~\ref{cor:averaging}.
\end{proof}

The parameter $\varepsilon$ covers quantized individual scores directly: it
is enough that every quantized score differs from its true value by at most
$\varepsilon$.  The parameter $\nu$ instead models a quantizer applied to the
state before all scores are formed.  No independence, unbiasedness, or
stochastic assumption is required in either formulation.

\end{document}

%% file: Proof_Overview_Combined-v2.tikz
\definecolor{proofink}{HTML}{263746}
\definecolor{proofmuted}{HTML}{67727D}
\definecolor{proofguide}{HTML}{A7B0B8}
\definecolor{proofgreen}{HTML}{2F7D49}
\definecolor{proofred}{HTML}{B33E46}
\definecolor{proofentrance}{HTML}{9A6024}
\definecolor{proofdirectionpale}{HTML}{F6E9CD}

\newcommand{\ProofOverviewCombined}{%
\begin{tikzpicture}[
  x=1cm,y=1cm,line cap=round,line join=round,
  font=\fontsize{8}{9.6}\selectfont,
  text=proofink,>=Latex,
  boundary/.style={draw=proofguide,line width=.65pt},
  intermediate/.style={draw=proofguide,dash pattern=on 3pt off 2.5pt,line width=.6pt},
  leader/.style={draw=proofmuted,line width=.5pt},
  caseone/.style={draw=proofgreen,line width=1.55pt},
  casetwo/.style={draw=proofred,line width=1.55pt},
  greendot/.style={circle,fill=proofgreen,draw=white,line width=.35pt,
    inner sep=0pt,minimum size=3.1pt},
  reddot/.style={rectangle,fill=proofred,draw=white,line width=.35pt,
    inner sep=0pt,minimum size=3.0pt},
  note/.style={font=\fontsize{8}{9.6}\selectfont,align=center}
]
  \path[use as bounding box] (-3.08,-3.42) rectangle (3.12,2.98);

  \path[fill=proofdirectionpale]
    (40:.233333) -- (40:2.8)
    arc[start angle=40,end angle=85,radius=2.8]
    -- (85:.233333)
    arc[start angle=85,end angle=40,radius=.233333] -- cycle;
  \draw[draw=proofentrance!45,densely dashed,line width=.45pt]
    (40:.233333) -- (40:2.8);
  \draw[draw=proofentrance!45,densely dashed,line width=.45pt]
    (85:.233333) -- (85:2.8);
  \draw[boundary] (0,0) circle[radius=2.8];
  \draw[intermediate] (0,0) circle[radius=.933333];
  \draw[boundary,fill=proofguide!12] (0,0) circle[radius=.233333];

  \node[note,text=proofentrance,text width=1.36cm] at (.93,2.16)
    {cone\\$\mathcal C(W)$};
  \node[anchor=east] at (-2.48,1.38) {$R$};
  \node[anchor=east] (middlelabel) at (-1.32,.66) {$R/3$};
  \draw[leader] (middlelabel.east) -- (145:.933333);
  \node[anchor=east] (innerlabel) at (-.34,.04) {$\varepsilon R$};
  \draw[leader] (innerlabel.east) -- (170:.233333);
  \fill[proofink] (0,0) circle[radius=.8pt];
  \node[font=\fontsize{7}{8.4}\selectfont,anchor=south west,inner sep=1.4pt]
    at (0,.025) {$0$};

  \draw[draw=proofgreen!45,line width=.7pt]
    (.025,.015) -- (.06,-.07) -- (-.03,-.22) -- (.08,-.32);
  \draw[draw=proofred!45,line width=.7pt]
    (-.02,.025) -- (-.07,-.055) -- (-.115,-.19) -- (-.18,-.28);
  \draw[draw=proofgreen!65,fill=white,line width=.55pt]
    (-.03,-.22) circle[radius=1.3pt];
  \node[rectangle,draw=proofred!65,fill=white,line width=.55pt,
    inner sep=0pt,minimum size=2.6pt] at (-.115,-.19) {};

  \coordinate (greenstart) at (.08,-.32);
  \coordinate (greenlast) at (1.52,1.12);
  \coordinate (entry) at (1.50,1.30);
  \coordinate (greentarget) at (55:2.86);
  \draw[caseone]
    (greenstart) -- (.13,-.50) -- (.35,-.60) -- (.48,-.50)
    -- (.65,-.64) -- (.85,-.48) -- (1.03,-.31) -- (1.14,-.15)
    -- (1.25,.05) -- (1.40,.24) -- (1.47,.40) -- (1.55,.63)
    -- (1.52,.85) -- (greenlast);
  \foreach \p in {(.35,-.60),(.85,-.48),(1.25,.05),(1.55,.63)}
    \node[greendot] at \p {};
  \node[greendot] at (greenstart) {};
  \node[greendot] at (greenlast) {};
  \draw[caseone,-{Latex[length=1.8mm,width=1.2mm]}]
    (1.03,-.31) -- (1.14,-.15);
  \node[anchor=west,text=proofgreen] at (.20,-.27) {$z_s$};
  \node[anchor=west,text=proofgreen,inner sep=1.5pt]
    at (1.65,1.12) {$z_\ell$};
  \draw[draw=proofgreen!50,line width=.75pt] (greenlast) -- (entry);
  \draw[draw=proofgreen!50,line width=.8pt,dash pattern=on 2.3pt off 2pt]
    (entry) -- (1.55,1.48) -- (1.44,1.62) -- (1.55,1.82)
    -- (1.61,2.00) -- (greentarget);
  \node[greendot,minimum size=5pt] at (entry) {};
  \node[circle,draw=proofgreen!50,fill=white,inner sep=0pt,minimum size=4pt]
    at (greentarget) {};
  \node[note,text width=2.05cm,inner sep=1.4pt] (entrylabel)
    at (-1.02,1.42) {first relevant\\entrance $z_e$\\$\norm{z_e}\ge R/3$};
  \draw[leader] (entrylabel.east) -- (1.14,1.56) -- (entry);
  \node[note,text=proofmuted,anchor=south west,inner sep=1pt]
    at (1.84,2.34) {target\\crossing};

  \coordinate (redstart) at (-.18,-.28);
  \coordinate (redlast) at (-2.01,-1.79);
  \coordinate (redtarget) at (-2.20,-1.86);
  \draw[casetwo]
    (redstart) -- (-.38,-.42) -- (-.50,-.32) -- (-.67,-.47)
    -- (-.92,-.52) -- (-1.02,-.69) -- (-1.21,-.78) -- (-1.36,-1.00)
    -- (-1.65,-1.05) -- (-1.72,-1.29) -- (-1.90,-1.38)
    -- (-1.99,-1.61) -- (redlast);
  \foreach \p in {(-.50,-.32),(-1.02,-.69),(-1.65,-1.05),(-1.99,-1.61)}
    \node[reddot] at \p {};
  \node[reddot] at (redstart) {};
  \node[reddot] at (redlast) {};
  \draw[casetwo,-{Latex[length=1.8mm,width=1.2mm]}]
    (-1.21,-.78) -- (-1.36,-1.00);
  \node[anchor=east,text=proofred] at (-.28,-.18) {$z_s$};
  \node[anchor=west,text=proofred,inner sep=1.5pt]
    at (-1.86,-1.66) {$z_\ell$};
  \draw[draw=proofred!50,line width=.75pt] (redlast) -- (redtarget);
  \node[reddot,minimum size=4.7pt] at (redtarget) {};
  \node[note,text width=2.9cm] (targetlabel) at (.20,-2.06)
    {target crossing\\$\norm{z_{k_R}}\ge R$};
  \draw[leader] (targetlabel.west) -- (-1.66,-2.06) -- (redtarget);

  \draw[caseone] (-1.85,-3.13) -- (-1.35,-3.13);
  \node[greendot] at (-1.60,-3.13) {};
  \node[anchor=west,text=proofgreen] at (-1.23,-3.13) {Case 1};
  \draw[casetwo] (.40,-3.13) -- (.90,-3.13);
  \node[reddot] at (.65,-3.13) {};
  \node[anchor=west,text=proofred] at (1.02,-3.13) {Case 2};
\end{tikzpicture}%
}

%% file: Cube_Three_Regimes-v2.tikz
\definecolor{cubeInk}{HTML}{263746}
\definecolor{cubeMuted}{HTML}{697783}
\definecolor{cubeGuide}{HTML}{B7C1C8}
\definecolor{cubeTeal}{HTML}{007D79}
\definecolor{cubeOrange}{HTML}{B86321}
\definecolor{cubeRed}{HTML}{B33E46}

\newcommand{\CubeTrajectoryPanel}[1]{%
\begin{tikzpicture}[
  x=1cm,y=1cm,line cap=round,line join=round,
  font=\fontsize{8}{9.6}\selectfont,text=cubeInk,
  >={Latex[length=1.35mm,width=.95mm]},
  cube axis/.style={draw=cubeMuted,line width=.45pt,->},
  cube grid/.style={draw=cubeGuide!35,line width=.25pt},
  cube state/.style={circle,draw=white,line width=.4pt,
    inner sep=0pt,minimum size=3.6pt}
]
  \path[use as bounding box] (-2.45,-2.35) rectangle (2.50,2.40);
  \ifnum#1=0
    \def\cubeAA{1}\def\cubeAB{0}\def\cubeBA{0}\def\cubeBB{1}
    \def\cubeN{2}\def\cubeScale{1}\def\cubeLimit{2}
    \def\cubeTicks{-2,-1,1,2}\def\cubeColor{cubeTeal}
  \else\ifnum#1=1
    \def\cubeAA{1}\def\cubeAB{-2}\def\cubeBA{2}\def\cubeBB{1}
    \def\cubeN{6}\def\cubeScale{1}\def\cubeLimit{2}
    \def\cubeTicks{-2,-1,1,2}\def\cubeColor{cubeOrange}
  \else
    \def\cubeAA{0}\def\cubeAB{-1}\def\cubeBA{1}\def\cubeBB{0}
    \def\cubeN{45}\def\cubeScale{.2}\def\cubeLimit{10}
    \def\cubeTicks{-10,-5,5,10}\def\cubeColor{cubeRed}
  \fi\fi
  \pgfmathsetmacro{\cubeAxisMax}{2.20/\cubeScale}
  \begin{scope}[x=\cubeScale cm,y=\cubeScale cm]
    \fill[cubeGuide!14] (-1,-1) rectangle (1,1);
    \draw[draw=cubeGuide,line width=.6pt] (-1,-1) rectangle (1,1);
    \foreach \t in \cubeTicks {
      \draw[cube grid] (\t,-\cubeLimit) -- (\t,\cubeLimit);
      \draw[cube grid] (-\cubeLimit,\t) -- (\cubeLimit,\t);
      \draw[draw=cubeMuted,line width=.4pt]
        (\t,{-0.035/\cubeScale}) -- (\t,{0.035/\cubeScale});
      \draw[draw=cubeMuted,line width=.4pt]
        ({-0.035/\cubeScale},\t) -- ({0.035/\cubeScale},\t);
      \node[anchor=north,inner sep=2pt,font=\fontsize{7}{8.4}\selectfont]
        at (\t,{-0.035/\cubeScale}) {$\t$};
      \node[anchor=east,inner sep=2pt,font=\fontsize{7}{8.4}\selectfont]
        at ({-0.035/\cubeScale},\t) {$\t$};
    }
    \draw[cube axis] (-\cubeAxisMax,0) -- (\cubeAxisMax,0);
    \draw[cube axis] (0,-\cubeAxisMax) -- (0,\cubeAxisMax);
    \node[anchor=south east,inner sep=1pt]
      at ({2.18/\cubeScale},{.09/\cubeScale}) {$z^{(1)}$};
    \node[anchor=north west,inner sep=1pt]
      at ({.09/\cubeScale},{2.18/\cubeScale}) {$z^{(2)}$};
    \foreach \a in {-1,1}{\foreach \b in {-1,1}{
      \node[rectangle,fill=cubeInk,inner sep=0pt,minimum size=2.8pt]
        at (\a,\b) {};
    }}
    \coordinate (cubeZ0) at (.75,.4);
    \foreach \k [remember=\nx as \px (initially 15),
                 remember=\ny as \py (initially 8)] in {1,...,\cubeN}{
      \pgfmathtruncatemacro{\ax}{\cubeAA*\px+\cubeAB*\py}
      \pgfmathtruncatemacro{\ay}{\cubeBA*\px+\cubeBB*\py}
      \pgfmathtruncatemacro{\nx}{\px+(\ax>0 ? -20 : 20)}
      \pgfmathtruncatemacro{\ny}{\py+(\ay>0 ? -20 : 20)}
      \pgfmathtruncatemacro{\prev}{\k-1}
      \coordinate (cubeZ\k) at (\nx/20,\ny/20);
      \ifnum#1=1
        \draw[draw=\cubeColor,line width=.9pt,->,
          shorten <=2.2pt,shorten >=2.2pt]
          (cubeZ\prev) -- (cubeZ\k);
      \fi
      \ifnum#1=2
        \draw[draw=\cubeColor,line width=.7pt]
          (cubeZ\prev) -- (cubeZ\k);
        \fill[\cubeColor] (cubeZ\k) circle[radius=.65pt];
      \fi
    }
    \ifnum#1=0
      \draw[draw=\cubeColor,line width=.95pt,<->,
        shorten <=2.2pt,shorten >=2.2pt] (cubeZ0) -- (cubeZ1);
      \node[cube state,fill=\cubeColor] at (cubeZ1) {};
      \node[text=\cubeColor,anchor=north,inner sep=2pt]
        at (-.25,-.70) {$z_1$};
      \node[text=\cubeColor,anchor=south,inner sep=2pt]
        at (.75,.49) {$z_0=z_2$};
    \else\ifnum#1=1
      \foreach \k in {1,...,5}{
        \node[cube state,fill=\cubeColor] at (cubeZ\k) {};
      }
      \node[text=\cubeColor,anchor=south west,inner sep=2pt]
        at (.78,.48) {$z_0=z_6$};
    \else
      \foreach \k in {5,12,23,36,44}{
        \pgfmathtruncatemacro{\prev}{\k-1}
        \draw[draw=\cubeColor,line width=.7pt,->]
          (cubeZ\prev) -- (cubeZ\k);
      }
      \node[cube state,fill=\cubeColor,minimum size=4pt] at (cubeZ45) {};
      \node[text=\cubeColor,anchor=north,inner sep=1pt] (cubeEndLabel)
        at (6,-3) {$z_{45}$};
      \draw[draw=\cubeColor!65,line width=.45pt]
        (cubeEndLabel.north) -- (8,-2) -- (cubeZ45);
      \node[text=\cubeColor,anchor=south east,inner sep=1pt] (cubeStartLabel)
        at (-1.6,1.9) {$z_0$};
      \draw[draw=\cubeColor!65,line width=.4pt]
        (cubeStartLabel.east) -- (cubeZ0);
    \fi\fi
    \node[circle,fill=white,draw=\cubeColor,line width=.75pt,
      inner sep=0pt,minimum size=4.2pt] at (cubeZ0) {};
  \end{scope}
\end{tikzpicture}%
}